\newtheorem{Lemma}{Lemma}
\newtheorem{Corollary}{Corollary}
\newtheorem{Algorithm}{Algorithm}
\newtheorem{Theorem}{Theorem}
\newtheorem{Proposition}{Proposition}
\newtheorem{Remark}{Remark}
\numberwithin{equation}{section}
\title{Smoothed Circulant Embedding with Applications to Multilevel Monte Carlo Methods for PDEs with Random Coefficients}
\author{Anastasia Istratuca\thanks{\noindent School of Mathematics and Maxwell Institute for Mathematical Sciences, University of Edinburgh,
King’s Buildings, Edinburgh EH9 3FD, UK. Email: \texttt{A.Istratuca@sms.ed.ac.uk} }, \, Aretha L Teckentrup{\footnote{School of Mathematics and Maxwell Institute for Mathematical Sciences, University of Edinburgh,
King’s Buildings, Edinburgh EH9 3FD, UK. Email: \texttt{A.Teckentrup@ed.ac.uk}}}}
\date{}
\begin{document}

\newcommand{\red}{\textcolor{red}}
\newcommand{\blue}{\textcolor{blue}}
\newcommand{\green}{\textcolor{green}}

\maketitle

\begin{abstract}
We consider the computational efficiency of Monte Carlo (MC) and Multilevel Monte Carlo (MLMC) methods applied to partial differential equations with random coefficients. These arise, for example, in groundwater flow modelling, where a commonly used model for the unknown parameter is a random field. We make use of the circulant embedding procedure for sampling from the aforementioned coefficient. To improve the computational complexity of the MLMC estimator in the case of highly oscillatory random fields, we devise and implement a smoothing technique integrated into the circulant embedding method. This allows to choose the coarsest mesh on the first level of MLMC independently of the correlation length of the covariance function of the random field, leading to considerable savings in computational cost. We illustrate this with numerical experiments, where we see a saving of up to factor 5-10 in computational cost for accuracies of practical interest.
\end{abstract}

\noindent \textbf{Keywords:} PDEs with random coefficients, multilevel Monte Carlo, log-normal random fields, circulant embedding, smooth periodisation

\section{Introduction}
\label{sec: introduction}

Uncertainty quantification (UQ) is a contemporary research area concerned with identifying, assessing and reducing uncertainties related to physical models, numerical algorithms, experiments and their predicted outcomes or quantities of interest. Methodical computations of uncertainties and errors in simulations, and careful exploration of how they propagate through a model and impact its expected outcome, are vital in many applications for principled risk assessments and decision making.

One of the most common approaches to modelling physical phenomena consists of partial differential equations (PDEs), which allow for computer simulations through the use of modern numerical solvers. Such models arise, for instance, in electrodynamics \citep{tovmasyan_boundary_1994}, and fluid \citep{howe_quantum_1980} and quantum mechanics \citep{herron_partial_2008}. In this case, the uncertainty arises from unknown parameters whose estimation from physical or experimental data is impractical.

The specific UQ path we follow in this paper rests on modelling the unknown PDE coefficients as random parameters, so that their uncertainty can be defined in terms of statistical quantities, such as probability density functions or covariance functions. Further, this uncertainty can be tracked through the model to the output by resolving statistical estimates of various functionals related to the solution of the PDE model. These estimates are called quantities of interests, and they can take, for example, the form of expected values or variances.

An example of where this problem arises in practice is the modelling of groundwater flow. Specifically, water resources, generally comprising of ground and surface water, must be preserved free of pollution. Thus, efficient methods for modelling and forecasting the movement of impurities through aquifers, which are used as supplies for potable water, are necessary. Such impurities can contaminate the groundwater flowing beneath earth's surface in various ways, such as carbon capture and underground storage, fracking, accidental spills or spent nuclear fuel repositories. A mathematical model for simulating groundwater flow rests on Darcy's Law \citep{de_marsily_quantitative_1986}, where the main parameter is the hydraulic conductivity, that is, the ease with which a fluid can move through porous media or fractures under a given pressure gradient. In practice, this can only be measured at a finite, usually small, number of geographical points. However, for numerical simulations, the value of this parameter is usually required at all the points in the computational domain, which constitutes the main source of uncertainty for this problem.

There is already an abundant literature investigating PDEs with random coefficients using a variety of methods. Possible approaches include the stochastic Galerkin and stochastic collocation approaches, or sparse tensor discretisations, see e.g. the surveys \citep{schwab_sparse_2011, gunzburger_stochastic_2014, cohen_approximation_2015} and the references therein. However, the computational cost of these techniques soars up when treating random field models with low regularity and short correlation length, which are of interest in this paper.

An effective approach for tackling these problems remains the Monte Carlo (MC) method. This entails generating realisations of parameter values and subsequently solving the governing equation for many such samples to approximate a specific quantity of interest. The benefits of MC are its ease of implementation, as well as the fact that the associated computational complexity does not grow with dimension \citep{kuo_lifting_2005}. Of course, these benefits are severely weakened if one takes into account that, in this context, one or more PDEs must be solved for each MC sample, the cost of which is naturally dimension-dependent. In addition, MC suffers from an extremely slow convergence rate, rendering this approach intractable.

One alternative which aims to alleviate this issue is the Multilevel Monte Carlo (MLMC) method \citep{heinrich_multilevel_2001, giles_multilevel_2008}. This involves defining multiple levels of approximation that differ in computational cost. In the context of solving PDEs with random coefficients, the MLMC levels can be defined by different grid resolutions to solve the governing equation. Thereafter, we cluster much of the computational effort in cheap estimations on a coarse mesh with low accuracy. To increase the quality of the approximation, we compute ``correction terms'' on finer levels with progressively higher precision. Since most of the uncertainty can be captured on the coarsest grid, comparatively fewer samples are required on the subsequent finer levels to obtain a sufficiently good estimate of the quantity of interest. This architecture of levels, which divides the work required to achieve a certain accuracy, leads to a significant reduction in the overall computational complexity of MLMC compared to standard MC.

One technique which is widely used for sampling from a random field is the Karhunen-Lo\`eve (KL) expansion \citep{kosambi_statistics_1943}. This has been extensively studied both theoretically and numerically for PDEs with log-normal random input with MLMC \citep{cliffe_multilevel_2011, teckentrup_multilevel_2012, charrier_strong_2012, teckentrup_further_2013, gittelson_multi-level_2013, charrier_finite_2013, giles_multilevel_2013, schwab_multilevel_2023}, as well as with Quasi-Monte Carlo methods (QMC) and Multilevel Quasi-Monte Carlo methods (MLQMC), see, for example, the survey \citep{kuo_application_2018} and the references therein. Further alterations of this algorithm to accommodate the difficulties arising from random field models with short correlation length have been proposed in \citep{teckentrup_further_2013, gittelson_multi-level_2013, schwab_multilevel_2023}. Other sampling methods have also been studied in the context of MLMC or MLQMC for PDEs with random coefficients, such as wavelet reconstruction approach \citep{bachmayr_representations_2018} or the white noise sampling method \citep{croci_multilevel_2021}.

In this paper, we focus on the circulant embedding technique for sampling from the random parameter. While this offers a discrete representation of a random field on a given mesh, it is both exact at these grid points, and computationally efficient. This is in comparison with the KL-expansion, which, once truncated, offers a continuous approximation whose rate of convergence is intrinsically linked to the decay of the eigenvalues of the covariance function's kernel, which has a direct impact on the computational cost. {Nevertheless, the KL-expansion and the circulant embedding method share certain analytical properties (as discussed further in section \ref{sec: eigenvalues-decay}).} The circulant embedding method has been integrated with MLMC and MLMQMC in e.g. \citep{graham_quasi-monte_2011, kuo_multilevel_2017}. 

The main novelty in this paper is adjusting the circulant embedding method to produce smooth approximations on coarse meshes of samples from extremely oscillatory random fields, i.e. with small correlation length, high variance and low regularity. For the application in groundwater flow outlined above, this problem usually arises when we consider large computational domains where the correlation length is considerably smaller compared to the size of the domain. This is of particular interest in MLMC for PDEs with random coefficients, as the computational gains prompted by MLMC rest on using coarse meshes, which is usually not feasible for the problem at hand. A similar algorithm has been proposed in \citep{sawko_effective_2022}, albeit for random fields with large-scale fluctuations which still require a fine grid representation. In particular Sawko et al. focus on deriving low rank approximations of the covariance matrix of the random field, and do not provide a theoretical analysis of the corresponding error.

The novel contributions of this work can be summarised as follows:
\begin{itemize}
    \item We adapt the circulant embedding method to sample from smoothed versions of a Gaussian field with short correlation length, and prove a bound on the introduced smoothing error. This method is not specific to applications in MLMC, but is a general method for obtaining smoothed realisations of highly oscillatory Gaussian fields which can accurately be represented on coarse grids.
    \item We integrate the smoothed circulant embedding method into MLMC, derive an optimal coupling between the smoothing and grid sizes on each level, and prove a complexity theorem for the resulting MLMC estimator.
    \item We demonstrate the gains in computational cost prompted by introducing smoothing into MLMC in a simple test problem in groundwater flow, where we observe a saving of factor 5-10 {in the settings where the performance of MLMC without smoothing is severely limited by the highly oscillatory nature of the PDE coefficient.}
\end{itemize}

The remainder of this manuscript is organised as follows. Section \ref{sec: ces} offers an overview of circulant embedding (CE) methods, as well as the proposed smoothing technique (CES), together with the corresponding error analysis. Further, Section \ref{sec: mlmc} outlines key aspects of the Monte Carlo and Multilevel Monte Carlo approaches to uncertainty quantification, with a focus on their application to PDEs with random input and their integration with CE and CES. The paper concludes with Section \ref{sec: numerical-results}, which summarises numerical experiments aimed at illustrating the potential of the smoothing procedure, particularly when combined with MLMC. We provide a short discussion and conclusion in Section \ref{sec: conclusions}.

\section{Circulant Embedding with Smoothing}
\label{sec: ces}

In this section, we begin with a brief review of the standard circulant embedding method for sampling from a stationary Gaussian random field. This relies on embedding the covariance matrix of the random field in a larger matrix which has a block circulant structure \citep{dietrich_fast_1993}, and is computationally very cheap because it exploits the Fast Fourier Transform (FFT) algorithm \citep{frigo_design_2005} for factorising the embedding matrix. We then briefly outline the smooth periodisation approach for ensuring the positive definiteness of the embedding matrix \citep{bachmayr_representations_2018}. Finally, we describe the smoothing technique for highly oscillatory random fields.

For simplicity, we shall write $a \lesssim b$ if $a/b$ is uniformly bounded independent of any parameters for two positive quantities $a$ and $b$. In addition, we write $a \simeq b$ if $a \lesssim b$ and $b \lesssim a$.

\subsection{Circulant Embedding}
\label{sec: ce}

Suppose we are interested in sampling from a stationary Gaussian field $Z$ on a domain $D \subset \mathbb{R}^d$ and probability space $(\Omega, \mathcal{F}, \mathcal{P})$ with continuous covariance function $r(\mathbf{x}, \mathbf{y})$, so that:
\begin{equation}
\label{eq: rf-mean-cov}
    \mathbb{E}[Z(\mathbf{x}, \cdot)] = 0 \quad \text{and} \quad \mathbb{E}[Z(\mathbf{x}, \cdot) Z(\mathbf{y}, \cdot)] = r(\mathbf{x}, \mathbf{y}) = C(\mathbf{x} - \mathbf{y}), \quad \forall \, \mathbf{x}, \mathbf{y} \in D,
\end{equation}
where $C: \mathbb{R}^d \rightarrow \mathbb{R}$ is a suitably chosen function. For simplicity, we set the mean of $Z$ to $0$ throughout this work, but we note that everything can be easily extended to non-zero mean $m(\mathbf{x}):D \rightarrow \mathbb R$. {Although the circulant embedding methodology is generally applicable, in the analysis and numerical examples presented in this work we focus on using either a Mat\'ern or separable exponential covariance function. The Mat\'ern covariance function is given by:
\begin{equation}
\label{eq: matern-cov-function}
    C(\mathbf{t}) = \sigma^2 \frac{2^{1-\nu}}{\Gamma(\nu)} \left(\frac{\sqrt{2\nu}\|\mathbf{t}\|_2}{\lambda}\right)^\nu K_\nu \left(\frac{\sqrt{2\nu}\|\mathbf{t}\|_2}{\lambda}\right),
\end{equation}
where $\nu > 0$ is the smoothness exponent, $\lambda > 0$ denotes the length scale, $\sigma^2 > 0$ is the marginal variance and $K_\nu$ stands for the modified Bessel function of the second kind. The separable exponential covariance {function}, on the other hand, is given by:
\begin{equation}
\label{eq: p-norm-cov-function}
    C(\mathbf{t}) \coloneqq \sigma^2 \exp\left(\frac{-\|\mathbf{t}\|_p}{\lambda}\right),
\end{equation}
with $p=1$. Here, $\| \cdot \|_p$ denotes the $\ell_p$ norm on $\mathbb{R}^d$, and the parameters $\sigma^2>0$ and $\lambda>0$ stand for the marginal variance and the length scale of $C$, respectively.}

Further, given a zero-mean Gaussian vector $\mathbf{Z} \in \mathbb{R}^M$ representing the field $Z$ at $M$ locations in $D$, with positive definite covariance matrix $R \in \mathbb{R}^{M \times M}$ so that $R = \mathbb{E}[\mathbf{Z}\mathbf{Z}^T]$, a sample from this discrete representation of the random field $Z$ can be generated from any factorisation of its covariance matrix $R$ of the form:
\begin{equation*}
    R = \Theta \Theta^T, \quad \Theta \in \mathbb{R}^{M \times M}.
\end{equation*}
In fact, for any vector $\boldsymbol{\xi} \coloneqq \left(\xi_1(\omega), \, \xi_2(\omega), \dotsc, \xi_M(\omega)\right)^T$ of independent standard Gaussian random variables, we can simply take:
\begin{equation}
\label{eq: sample-from-Z}
    \mathbf{Z} \coloneqq \Theta \boldsymbol{\xi}.
\end{equation}
This defines a suitable realisation from the discrete representation of the random field $Z$, since we can easily check that Eq. \eqref{eq: sample-from-Z} implies $\mathbb{E}[\mathbf{Z} \mathbf{Z}^T] = R$, as required.

There are multiple techniques to factorising the covariance matrix $R$ of the random field $Z$, such as the Cholesky decomposition. However, such commonly used algorithms have computational complexity $\mathcal{O}({M}^3)$ \citep{higham_accuracy_2002}, {where, for example, $M = m^d$ for a uniform grid of the domain $D$ with mesh size $m$ in all coordinate directions}. On the other hand, the circulant embedding method provides a fast approach to factorising $R$ by first embedding it in a larger matrix with circulant structure, which can then be factorised using the FFT. This has complexity $\mathcal{O}({s \log_2 s})$ \citep{davis_circulant_1979}, {where $s = (2m)^d$, with $m$ as in the previous example}. To describe this procedure, let us consider the one- and two-dimensional cases individually. Higher dimensional cases follow analogously. 

\subsubsection{One-Dimensional Case}
\label{sec: sec-3-1d-case}

Suppose now that $Z$ is a stationary line Gaussian process, with zero mean and covariance function $r(x,y) \coloneqq C(x-y)$, as described in section \ref{sec: ce}. Further, suppose that the domain is $D = [0,1]$, which is discretised using a grid $\mathcal{T}$ consisting of $m+1$ equispaced sampling points:
\begin{equation*}
    \mathcal{T} = \{ x_k = k \Delta x\}_{k=0}^m = \left\{x_k = \frac{k}{m}\right\}_{k=0}^m,
\end{equation*}
where $\Delta x = \frac{1}{m}$ is the mesh size.
The advantage of considering equidistant points is that it allows us to characterise the entries in the covariance matrix $R$ as follows:
\begin{equation*}
    R_{j,k} = C(x_j-x_k) = C(|j-k| \Delta x) = C\left(\frac{|j-k|}{m}\right) \eqqcolon C_{|j-k|},
\end{equation*}
as $C((j-k) \Delta x) = C((k-j) \Delta x) = C(|j-k| \Delta x)$, $\forall j, k = 0, \dotsc m$.
Thus, the covariance matrix $R$ has the following form:
\begin{equation*}
    R = \begin{bmatrix}
    C_0 & C_1 & C_2 & \dotsc & C_m \\
    C_1 & C_0 & C_1 & \dotsc & C_{m-1} \\
    C_2 & C_1 & C_0 & \dotsc & C_{m-2} \\
    \vdots & \vdots & \vdots & \ddots & \vdots \\
    C_m & C_{m-1} & C_{m-2} & \dotsc & C_0
    \end{bmatrix}.
\end{equation*}
In other words, $R$ is a \textit{symmetric Toeplitz} matrix, which we denote by $\text{Toeplitz}(C_0, \dotsc, C_m)$. We then embed $R$ into a larger circulant matrix $S$ by mirroring its components, i.e. $S = \text{Toeplitz}(C_0, \dotsc, C_m, C_{m-1}, \dots, C_1)$, so that $S \in \mathbb{R}^{2m \times 2m}$. 

Under the assumption that the symmetric matrix $S$ is positive definite, we can diagonalise it using the Fourier transform, yielding the following eigenvalue decomposition \citep{barnett_matrices_1990}:
\begin{equation}
\label{eq: circulant-embedding-matrix-complex-factorisation-1D}
    S = F \Lambda F^* = F^* \Lambda F.
\end{equation}
Here, $F \in \mathbb{C}^{2m \times 2m}$ is the one-dimensional unitary discrete Fourier matrix, and $F^* \in \mathbb{C}^{2m \times 2m}$ is its complex conjugate transpose, whose entries are given by:
\begin{equation}
\label{eq: Fourier-matrix-entries}
    F_{p,q} = \frac{1}{\sqrt{2m}} \exp\left(\frac{2 \pi \mathrm{i} (p-1) (q-1) }{2m} \right), \quad p,q = 1, \dotsc, 2m.
\end{equation}
Further, $\Lambda$ is the diagonal matrix of eigenvalues of $S$, which are given by:
\begin{equation*}
    {\Lambda_j} = (\sqrt{2m} F \mathbf{s})_{{j}}, \quad {j = 1, \dotsc, 2m},
\end{equation*}
where $\mathbf{s} \in \mathbb{R}^{2m}$ is the first {column} of the matrix $S$.

Returning to the problem at hand, recall that we can now factorise the circulant matrix $S$ as in Eq. \eqref{eq: circulant-embedding-matrix-complex-factorisation-1D}. In some applications, such as quasi-Monte Carlo methods for PDEs with random coefficients \citep{graham_quasi-monte_2011}, we require a real factorisation of $S$. Thus, to avoid complex values, we convert the Fourier factorisation into a Hartley transform \citep{hartley_more_1942} by combining the real and imaginary parts of $F$, denoted by $\Re (F)$ and $\Im (F)$, respectively. This yields $S = G \Lambda G^T$, where $G \in \mathbb{R}^{2m \times 2m}$ is given by $G = \Re (F) + \Im (F)$, as proven in \citep[Lemma 4]{graham_quasi-monte_2011}.

Further, once we obtain a factorisation of the form $S = G \Lambda G^T$, we can take $\boldsymbol{\xi} \sim \mathcal{N}(0, I)$ to obtain $\mathbf{Z} = (G \Lambda^{\frac{1}{2}} \boldsymbol{\xi})_R$ a sample from the random field \citep[Corollary 3]{graham_quasi-monte_2011}. Here, the notation $(G \Lambda^{\frac{1}{2}} \boldsymbol{\xi})_R$ represents the entries in this vector that correspond to the location of the matrix $R$ in the embedding matrix $S$. In this case, it corresponds to the first $m+1$ entries in $G \Lambda^{\frac{1}{2}} \boldsymbol{\xi}$.

\subsubsection{Two-Dimensional Case}
\label{sec: sec-3-2d-case}
 
In this section, we let $Z$ be a stationary Gaussian process, with mean $0$ and covariance function $r(\mathbf{x}, \mathbf{y}) \coloneqq C(\mathbf{x} - \mathbf{y})$, as characterised in Section \ref{sec: ce}. In two dimensions, the circulant embedding method can be extended to a rectangular domain $D$ discretised uniformly. For simplicity, let us assume that the domain is the unit square $D = [0,1]^2$, and that its corresponding two-dimensional grid $\mathcal{T}$ is formed by $(m_1+1) \times (m_2+1)$ points of the form:
\begin{equation*}
    \mathbf{x}_{p_1,p_2} = (x_{p_1}, y_{p_2}),
\end{equation*}
where $x_{p_1} = p_1\Delta x $ and $y_{p_2} = p_2\Delta y$ with $p_i = 0, \dotsc, m_i$, $i = 1, 2$. Here, $\Delta x = \frac{1}{m_1}$ and $\Delta y = \frac{1}{m_2}$ denote the horizontal and vertical mesh sizes used in discretising $D$, respectively. 

Suppose now that the points are ordered lexicographically first in the $x$ direction, and then in the $y$ direction. Stationarity and equispatiality ensure that the entries in the covariance matrix $R$ can be expressed as follows:
\begin{equation*}
    R_{(p_1,p_2), (q_1,q_2)} = C\left(\begin{bmatrix}x_{p_1} \\ y_{p_2}\end{bmatrix} - \begin{bmatrix}x_{q_1}\\ y_{q_2}\end{bmatrix}\right) = C\left(\begin{bmatrix}\frac{|p_1-q_1|}{m_1} \\ \frac{|p_2-q_2|}{m_2}\end{bmatrix}\right) \eqqcolon C_{|p_1-q_1|, |p_2-q_2|},
\end{equation*}
using the symmetry of $C$ as in the one-dimensional case, for $p_i, q_i = 0, \dotsc, m_i$, $i=1, 2$. Thus, the covariance matrix is a \textit{symmetric block Toeplitz} matrix. Specifically, it has the following form:
\begin{equation*}
R = \text{Toeplitz}(C_{0,0}, C_{1,0}, \dotsc, C_{m_1,0}, C_{0,1}, C_{1,1}, \dotsc, C_{m_1,1}, C_{0,2} \dotsc C_{m_2, m_2}).
\end{equation*}
Hence, the covariance matrix is actually block Toeplitz and block symmetric, with blocks which are, in turn, Toeplitz and symmetric. In other words, the covariance matrix is given by:
\begin{equation*}
    R = \text{Toeplitz}(R_0, R_1, \dotsc, R_{m_2}),
\end{equation*}
where each block $R_j$, $j = 0, \dotsc, m_2$, is a Toeplitz matrix of dimension $(m_1+1) \times (m_1+1)$:
\begin{equation*}
    R_j = \text{Toeplitz}(C_{0, j}, C_{1,j}, \dotsc, C_{m_1, j}).
\end{equation*}

We can now follow a similar procedure to the method introduced in Section \ref{sec: sec-3-1d-case} for embedding $R$ into a circulant matrix. Accordingly, we first embed each block $R_j$, $j = 0, \dotsc, m_2$, into a circulant matrix $S_j$ of dimension $2m_1 \times 2m_1$. Subsequently, the resulting blocks are embedded into a large circulant matrix $S$ of dimension $4m_1 m_2 \times 4m_1 m_2$, so that $S$ becomes a block circulant matrix with circulant blocks. 
The embedding matrix $S$ can then be characterised as:
\begin{equation*}
    S = \text{Toeplitz}(S_0, S_1, \dotsc, S_{m_2}, S_{m_2-1}, \dotsc, S_1),
\end{equation*}
where each block $S_j$ is itself circulant:
\begin{equation*}
    S_j = \text{Toeplitz}(C_{0,j}, C_{1,j}, \dotsc, C_{m_1,j}, C_{m_1-1,j}, \dotsc, C_{1,j}), \quad j = 0, \dotsc, m_2.
\end{equation*}

As $S$ is a circulant matrix, it may be diagonalised via the two-dimensional discrete Fourier transform \citep{barnett_matrices_1990}. This is obtained by applying the one-dimensional Fourier transform, as described in the previous section, to each of the two coordinate directions in turn. Thus, we have:
\begin{equation*}
    S = F \Lambda F^* = F^* \Lambda F,
\end{equation*}
where $F$ is the unitary two-dimensional Fourier matrix, whose entries are given by:
\begin{equation*}
    F_{(p_1,p_2),(q_1,q_2)} = \frac{1}{\sqrt{4m_1m_2}}\exp\left(\frac{2 \pi \mathrm{i} \, (p_1-1) (q_1-1)}{2m_1}\right) \exp\left(\frac{2 \pi \mathrm{i} \, (p_2-1) (q_2-1)}{2m_2}\right),
\end{equation*}
with $p_i, q_i = 1, \dotsc, 2m_i, \, i=1,2$. Here, $F^*$ and $\Lambda$ are as in Section \ref{sec: sec-3-1d-case}. Since $S$ is circulant, it is again uniquely defined by its first {column (or row)}, and so its eigenvalues are specified by:
\begin{equation*}
    {\Lambda_j} = (\sqrt{4m_1 m_2} F \mathbf{s})_{{j}}, \quad {j = 1, \dotsc, 4m_1 m_2}, 
\end{equation*}
where $\mathbf{s}$ is the first {column} of the matrix $S$. 

In an analogous fashion to the previous section, we can modify the obtained Fourier factorisation into a Hartley transform. The corresponding argument is given in \citep[Lemma 5]{graham_quasi-monte_2011}. Similar to the one-dimensional case, $\mathbf{Z} = (G \Lambda^{\frac{1}{2}} \boldsymbol{\xi})_R$, for $\boldsymbol{\xi} \sim \mathcal{N}(0, I)$, gives a sample from the Gaussian random field. Here, $(G \Lambda^{\frac{1}{2}} \boldsymbol{\xi})_R$ corresponds to selecting the first $m_1+1$ row entries in the first $m_2+1$ columns of $G \Lambda^{\frac{1}{2}} \boldsymbol{\xi}$.

\subsection{Smooth Periodisation}
\label{sec: periodisation}

There are instances when the circulant embedding matrix $S$ is not positive definite, depending generally on the covariance function used. As such, the Fourier diagonsalisation cannot be applied directly. To ensure the positive definiteness of $S$, Bachmayr et al. suggest in \citep{bachmayr_unified_2020} periodically extending the covariance function of the Gaussian field and padding the resulting covariance matrix before embedding it to obtain a circulant structure. Here, the analysis is developed for the case when the function $C$ is the Mat\'ern covariance, given by \eqref{eq: matern-cov-function}.
 Without loss of generality, we assume that the sampling domain $D$ is contained in $[0,1]^d$, so that $C : [-1, 1]^d \rightarrow \mathbb{R}$. { Note that for the separable exponential covariance function, given by \eqref{eq: p-norm-cov-function}, the eigenvalues of $S$ are, in fact, real and positive, so that no padding is necessary.}

Specifically, Bachmayr et al. develop two types of periodisation involving a non-smooth and smooth truncation of the covariance function, respectively. The former intrinsically depends on the choice of discretisation parameters $m_i$, $i=1, \dotsc, d$, and length scale $\lambda$, so that the analysis only holds for $\frac{1}{m_i} \ll \lambda$,  $i=1, \dotsc, d$, which is not always satisfied in practice. On the other hand, the smooth periodisation does not involve such an assumption, and, for this reason, this is the method we use in the analysis and numerical examples below. 

Typically, there are two elements required in the periodisation approach: the padding of the covariance matrix and the choice of periodic covariance function $C^{\text{ext}}$. 
The padding technique was first introduced by Graham et al. in \citep{graham_quasi-monte_2011} to ensure the positive definiteness of the embedding matrix $S$. The procedure involves extending the covariance matrix, i.e. ``padding'' it, before mirroring its components. For example, in the two-dimensional case, the extended covariance matrix has the following form:  
\begin{equation*}
    R = \text{Toeplitz}(R_0, R_1, \dotsc, R_{m_2}, R_{m_2+1}, \dotsc, R_{m_2+J_2}),
\end{equation*}
where each $R_j$, $j = 0, \dotsc, m_2+J_2$, is a Toeplitz matrix of dimension $(m_1+J_1+1) \times (m_1+J_1+1)$:
\begin{equation*}
    R_j = \text{Toeplitz}(C_{0, j}, C_{1,j}, \dotsc, C_{m_1, j}, C_{m_1+1, j}, \dotsc, C_{m_1+J_1, j}),
\end{equation*}
where $J_1, J_2 \in \mathbb{N}_0$ denote the padding parameters in the $x$ and $y$ directions, respectively. Upon mirroring, this yields the following structure on the embedding matrix:
\begin{equation*}
    S = \text{Toeplitz}(S_0, S_1, \dotsc, S_{m_2}, S_{m_2+1}, \dotsc, S_{m_2+J_2}, S_{m_2+J_2-1}, \dotsc, S_1),
\end{equation*}
where each block $S_j$ is itself circulant:
\begin{equation*}
    S_j = \text{Toeplitz}(C_{0,j}, C_{1,j}, \dotsc, C_{m_1,j}, C_{m_1+1, j}, \dotsc, C_{m_1+J_1, j}, C_{m_1+J_1-1,j}, \dotsc, C_{1,j}).
\end{equation*}
Note that with slight abuse of notation, we still denote the extended covariance and embedded covariance matrices by $R$ and $S$, respectively. This is to unify and simplify notation. 

Next, the padding values are obtained by evaluating the periodic covariance function $C^{\text{ext}}$ at the points $(i,j)$, with $i = m_1+1, \dotsc, m_1+J_1, \dotsc m_1+1$ and $j = m_2+1, \dotsc, m_2+J_2, \dotsc m_2+1$. 
As explained in \citep{bachmayr_unified_2020}, for the periodisation technique, a $2\ell$-periodic extension of $C$ can be constructed by first choosing a cutoff function $\varphi$ satisfying:
\begin{equation*}
    \varphi = 1 \quad \text{on} \quad [-1,1] \quad \text{and} \quad \varphi=0 \quad \text{on} \quad \mathbb{R} \setminus [-\kappa, \kappa], \quad \kappa \coloneqq 2\ell -1 \geq \ell,
\end{equation*}
where $\text{supp}(C)=[-1,1]^d$ and $\ell = 1+J/m \geq 1$ with $J$ padding and $m$ discretisation parameters. Here, we assume, for simplicity, that $J \equiv J_i$ and $m \equiv m_i$, for $i = 1, \dotsc, d$.
Then, the periodic extension $C^{\text{ext}}$ is given by:
\begin{equation}
\label{eq: periodic-cov-fun}
    C^{\text{ext}}(\mathbf{x}) = \sum_{\mathbf{n} \in \mathbb{Z}^d} (C \varphi_\kappa) (\mathbf{x} + 2 \ell \mathbf{n}), \quad \mathbf{x} \in \mathbb{R}^d,
\end{equation}
where $\varphi_\kappa(\mathbf{x}) \coloneqq \varphi(\|\mathbf{x}\|_\infty)$. For the smooth periodisation case, $\varphi$ must also satisfy the following:
\begin{equation*}
    \ell > 1 \quad \text{and} \quad \varphi \in C_0^\infty(\mathbb{R}), \quad \text{with} \quad \text{supp}(\varphi) = [-\kappa, \kappa].
\end{equation*}
An example of such a cutoff function which is easy to implement in practice is given by:
\begin{equation*}
    \varphi(t) = \frac{\eta\left(\frac{\kappa-|t|}{\kappa-1}\right)}{\eta\left(\frac{\kappa-|t|}{\kappa-1}\right) + \eta\left(\frac{|t|-1}{\kappa-1}\right)},
\end{equation*}
where:
\begin{equation*}
    \eta(x) = \begin{cases}
        \exp(-x^{-1}), \quad &x > 0, \\
        0, &x \leq 0.
    \end{cases}
\end{equation*}
In addition, to ensure that $C^{\text{ext}} = C$ on $[-1,1]^d$ we require \citep{bachmayr_unified_2020}:
\begin{equation*}
    \ell \geq \frac{\kappa +  \sqrt{d}}{2}.    
\end{equation*}
Further, Theorem 10 in \citep{bachmayr_unified_2020} gives the following lower bound on $\kappa$ such that the embedding matrix is symmetric positive definite:
\begin{equation}
\label{eq: padding-bound}
    \frac{\kappa}{\lambda} \geq C_1 + C_2 \max \left\{\nu^{\frac{1}{2}}(1 + |\ln(\nu)|), \nu^{-\frac{1}{2}}\right\}.
\end{equation}

\subsection{Eigenvalue Decay}
\label{sec: eigenvalues-decay}

{Throughout the remainder of the paper, let $\{\Lambda^{\text{ord}}_j\}_{j=1}^s$ be the circulant embedding eigenvalues $\{\Lambda_j\}_{j=1}^s$ arranged in non-increasing order.} { The main results in this section, given in Lemma \ref{thm: matern-egnv-decay} and \ref{lemma: expo-egnv-decay} below, provide a decay rate (in $j$) of $\{\Lambda^{\text{ord}}_j\}_{j=1}^s$. This will be a key factor in the study of the proposed smoothing technique.}
{ The proofs of Lemma \ref{thm: matern-egnv-decay} and \ref{lemma: expo-egnv-decay} are based on the following two main ideas. 

Firstly, as shown in Theorem 3.1 in \cite{graham_analysis_2018}, the scaled (discrete) eigenvalues of the extended circulant embedding matrix of the periodised covariance $C^{\text{ext}}$ converge to the (continuous) eigenvalues of the covariance operator of $C^{\text{ext}}$ as the mesh sizes $h_i, i=1,\dots,d$, tend to zero.
This can be seen by comparing the expression for the discrete eigenvalues 
$\{\Lambda_j\}_{j=1}^s$, obtained from the discrete Fourier transform as described in section \ref{sec: ce}, with that of the continuous eigenvalues, given by the Fourier transform of $C^{\text{ext}}$:
\begin{equation*}
    \lambda_\mathbf{j}^{\text{ext}} = \int_{[-\ell, \ell]^d} C^{\text{ext}}(\mathbf{x}) \exp\left(\frac{\pi \mathrm{i} \mathbf{j} \mathbf{x}}{\ell} \right) \mathrm{d}\mathbf{x}, \quad \mathbf{j} \in \mathbf{Z}^d.
\end{equation*}
The discrete eigenvalues, when scaled by $\prod_{i=1}^d m_i^{-1}$, can thus be seen as a rectangle rule approximation of the continuous eigenvalues. 

Secondly, it is shown in \citep{bachmayr_representations_2018} that the ordered eigenvalues $\{\lambda_j^{{\text{ext}}}\}_{j=1}^s$ of the periodised covariance $C^{\text{ext}}$ in the smooth periodisation case decay at precisely the same rate as the eigenvalues $\{\lambda_j\}_{j=1}^s$ of the covariance operator of the original covariance $C$. This is proven for a general class of covariance functions in \citep[Eq. (64) and (65)]{bachmayr_representations_2018}, however here we focus on the Mat\'ern case, stated in Lemma \ref{thm: matern-egnv-decay} below. More details on this are given in Remark \ref{rem:KL_CE}.
}

{Finally, the following two lemmas give the decay rate of the ordered eigenvalues $\{\Lambda_j^{\text{ord}}\}_{j=1}^s$ for the case of Mat\'ern and separable exponential covariance functions, respectively.} { Following from the observations above, the decay rate we obtain for the discrete eigenvalues $\{\Lambda^{\text{ord}}_j\}_{j=1}^s$ is the same as that of the continuous eigenvalues $\{\lambda_j\}_{j=1}^\infty$ of the covariance operator of the original covariance $C$.}

\begin{Lemma}
\label{thm: matern-egnv-decay}
    Let $C$ be the Mat\`ern covariance  \eqref{eq: matern-cov-function}. {Suppose the padding parameters $J_i,  i = 1, \dotsc, d$, are chosen such that $\kappa$ satisfies Eq. \eqref{eq: padding-bound}}. Let $\{\Lambda_j^{{\text{ord}}}\}_{j=1}^s$ denote the {ordered} eigenvalues of the (extended) embedding matrix $S \in \mathbb{R}^{s \times s}$, with $s= 2^d \prod_{i=1}^d (m_i + J_i)$. Then for $j = 1, \dots, s$:
    \begin{equation}
    \label{eq: egnv-matern-cov-decay}
        \left(\prod_{i=1}^d m_i^{-1} \right) \, \Lambda_j^{{\mathrm{ord}}} \lesssim j^{-1-\frac{2\nu}{d}}.
    \end{equation}
\end{Lemma}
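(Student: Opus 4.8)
The plan is to reduce the statement to a bound on the continuous Karhunen--Lo\`eve (KL) eigenvalues $\{a_j\}$ appearing in the expansion \eqref{eq: kl-expansion}, and then to read off the decay of those eigenvalues from the spectral properties of the Mat\'ern kernel. First I would invoke the general result \citep[Theorem 3.1]{bachmayr_representations_2018}, which is exactly what underpins the claim in Section \ref{sec: eigenvalues-decay} that the ordered eigenvalues of the smooth-periodised embedding matrix $S$ decay at the same rate as the KL eigenvalues $a_j$ of the integral operator $\mathcal{C}$ with kernel $C$ on $L^2(D)$. To apply it I would first verify its hypotheses in the present setting: the padding bound \eqref{eq: padding-bound}, through \citep[Theorem 10]{bachmayr_unified_2020}, guarantees that $S$ is symmetric positive definite, and the condition $\ell \geq (\kappa + \sqrt{d})/2$ ensures $C^{\text{ext}} = C$ on $[-1,1]^d$, so that the smooth periodisation is well-defined and the $\{\lambda_j\}_{j=1}^s$ are genuine nonnegative eigenvalues of $S$. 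The theorem then transfers the polynomial decay rate to the embedding eigenvalues, uniformly in the discretisation, so that it suffices to prove $a_j \lesssim j^{-1-\frac{2\nu}{d}}$.

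Second, I would establish this decay of the Mat\'ern KL eigenvalues. The key input is that the Mat\'ern covariance \eqref{eq: matern-cov-function} has spectral density $\hat{C}(\boldsymbol{\omega}) \simeq \left(2\nu/\lambda^2 + \|\boldsymbol{\omega}\|_2^2\right)^{-(\nu + d/2)}$, so that, up to a positive constant and lower-order terms, $\mathcal{C}$ acts as the negative-order operator $\left(2\nu/\lambda^2\, I - \Delta\right)^{-(\nu + d/2)}$. On a bounded domain the eigenvalues of such an operator are dictated by the high-frequency decay of its symbol, which here is of algebraic order $2\nu + d$. Comparing with the resolvent power $(I - \Delta)^{-(\nu + d/2)}$, whose eigenvalues obey the Weyl asymptotic $j^{-2(\nu + d/2)/d} = j^{-(2\nu + d)/d}$, then gives $a_j \simeq j^{-(2\nu + d)/d} = j^{-1-\frac{2\nu}{d}}$. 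Together with the first step this yields $\lambda_j \lesssim j^{-1-\frac{2\nu}{d}}$, as claimed.

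The hard part will be this second step: passing rigorously from the algebraic decay of the spectral density (of order $2\nu + d$) to the eigenvalue decay $j^{-1-\frac{2\nu}{d}}$ of the integral operator on the \emph{bounded} domain $D$. On all of $\mathbb{R}^d$ the Fourier transform diagonalises $\mathcal{C}$ and the rate is immediate, but on a bounded domain the boundary perturbs the spectrum, so one must argue that the leading-order asymptotics remain governed by the symbol at high frequency. I would handle this either by appealing to a Weyl-type eigenvalue asymptotic for negative-order pseudodifferential operators, or by a direct min--max comparison of $\mathcal{C}$ against the fractional-Laplacian resolvent, controlling the lower-order contributions coming from the $2\nu/\lambda^2$ shift and the precise Bessel-function form of the kernel. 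The first step, by contrast, is essentially a citation once positive-definiteness and the agreement $C^{\text{ext}} = C$ have been checked.
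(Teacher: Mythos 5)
Your proposal is correct and follows essentially the same route as the paper, which offers no standalone proof of this lemma and instead defers entirely to \citep[Theorem 3.1]{bachmayr_representations_2018} for the smooth-periodisation case, specialised to the Mat\'ern spectral density exactly as in your two steps. The only remark worth making is that the cited theorem controls the embedding eigenvalues directly through the algebraic decay of $\hat{C}$ (the Fourier coefficients of the smoothly periodised covariance inherit this decay, and counting lattice points gives the ordered rate $j^{-1-\frac{2\nu}{d}}$), so the bounded-domain Weyl-asymptotics argument you flag as the hard part is not actually required.
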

\begin{proof}
{To find the decay rate of the discrete eigenvalues, we note that:
\begin{equation*}
    \left(\prod_{i=1}^d m_i^{-1} \right) \Lambda_j = \left|\left(\prod_{i=1}^d m_i^{-1} \right) \Lambda_j - \lambda^{\text{ext}}_j + \lambda^{\text{ext}}_j \right| \leq \left|\left(\prod_{i=1}^d m_i^{-1} \right) \Lambda_j - \lambda^{\text{ext}}_j \right| + |\lambda^{\text{ext}}_j|.
\end{equation*}
Then, because the smoothed periodisation preserves the regularity of the Mat\'ern covariance function by construction, we can use Lemma 1 in \cite{bachmayr_unified_2020} 
to infer that the discrete eigenvalues converge to the continuous eigenvalues faster than $|\lambda^{\text{ext}}_j|$ decays. Hence, $\left\{\left(\prod_{i=1}^d m_i^{-1} \right) \Lambda^{\text{ord}}_j\right\}_{j=1}^s$ decay at the same rate as $\lambda_j^{\text{ext}}$, and, therefore, as $\lambda_j$ {by \citep[Eq. (64) and (65)]{bachmayr_representations_2018}}.} The decay rate for the continuous eigenvalues is given in \cite[Corollary 5]{graham_quasi-monte_2015}, which completes the proof.
\end{proof}

\begin{Lemma}
\label{lemma: expo-egnv-decay}
    Let $C$ be the separable exponential covariance  \eqref{eq: p-norm-cov-function}, and let $\{\Lambda_j^{{\text{ord}}}\}_{j=1}^s$ denote the {ordered} eigenvalues of the embedding matrix $S \in \mathbb{R}^{s \times s}$, with $s=2^d \prod_{i=1}^d m_i$. Then for $j = 1, \dots, s$:
    \begin{equation}
    \label{eq: egnv-real-and-positive}
        \Lambda_j \in \mathbb{R} \quad \text{ and } \quad \Lambda_j > 0,
    \end{equation}
     and
    \begin{equation}
    \label{eq: egnv-exp-cov-decay}
        \left(\prod_{i=1}^d m_i^{-1} \right) \, \Lambda_j^{{\mathrm{ord}}}  \lesssim {j^{-2} (\log j)^{2(d-1)}, \quad \forall d \geq 1.}
    \end{equation}
\end{Lemma}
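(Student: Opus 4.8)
The plan is to derive the separable exponential case from the already-established Matérn decay in Lemma \ref{thm: matern-egnv-decay}, exploiting the well-known fact that the separable exponential covariance \eqref{eq: p-norm-cov-function} with $p=1$ is a product of one-dimensional Matérn covariances with smoothness parameter $\nu = 1/2$. Specifically, in one dimension the Matérn covariance \eqref{eq: matern-cov-function} at $\nu = 1/2$ reduces (using $K_{1/2}(z) = \sqrt{\pi/(2z)}\,e^{-z}$ and $\Gamma(1/2)=\sqrt{\pi}$) to $C(t) = \sigma^2 \exp(-|t|/\lambda)$, and the separable structure $\|\mathbf{t}\|_1 = \sum_{i=1}^d |t_i|$ gives $C(\mathbf{t}) = \sigma^2 \prod_{i=1}^d \exp(-|t_i|/\lambda)$, a tensor product of exponential (hence $\nu=1/2$ Matérn) covariances. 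First I would make this reduction explicit, so that the $d$-dimensional separable embedding matrix $S$ factors as a Kronecker product $S = S^{(1)} \otimes \cdots \otimes S^{(d)}$ of the one-dimensional embedding matrices, and its eigenvalues are exactly the products $\lambda_{j_1}^{(1)} \cdots \lambda_{j_d}^{(d)}$ of the one-dimensional eigenvalues.

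For the reality and positivity claim \eqref{eq: egnv-real-and-positive}, I would argue directly at the one-dimensional level. Each $S^{(i)}$ is a symmetric circulant matrix, so its eigenvalues are real; moreover, for the one-dimensional exponential covariance the circulant embedding is known to be exactly positive definite without any padding (this is the classical fact that the exponential kernel embeds into a positive definite circulant, which is why here $s = 2^d\prod_i m_i$ with no padding parameter $J_i$), so each $\lambda_{j}^{(i)} > 0$. The tensor-product eigenvalues are then real and strictly positive, establishing \eqref{eq: egnv-real-and-positive}. This is the cleaner half of the argument.

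For the decay rate \eqref{eq: egnv-exp-cov-decay}, I would first invoke Lemma \ref{thm: matern-egnv-decay} in dimension $d=1$ with $\nu = 1/2$, which yields $\lambda_j^{(i)} \lesssim j^{-1 - 2\nu/1} = j^{-2}$ for the one-dimensional eigenvalues in sorted order. The main obstacle is that the $d$-dimensional eigenvalues are products of one-dimensional ones, and sorting a product of sequences each decaying like $k^{-2}$ does \emph{not} obviously give a clean $j^{-2}$ decay — one must control the counting function $N(\epsilon) = \#\{(j_1,\dots,j_d) : \prod_i \lambda_{j_i}^{(i)} > \epsilon\}$ and invert it. I would estimate $N(\epsilon)$ by noting that $\prod_i \lambda_{j_i}^{(i)} > \epsilon$ forces $\prod_i j_i \lesssim \epsilon^{-1/2}$, and the number of integer lattice points with $\prod_i j_i \leq T$ grows like $T(\log T)^{d-1}$; feeding this back would naively give $\lambda_j \lesssim j^{-2}(\log j)^{2(d-1)}$, with spurious logarithmic factors.

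To remove the logarithms and recover the sharp $j^{-2}$ bound, I would instead apply Lemma \ref{thm: matern-egnv-decay} directly in dimension $d$ to the separable exponential, rather than tensorising: since the separable exponential is itself a (separable) Matérn-type covariance with effective smoothness $\nu = 1/2$ in each coordinate, the $d$-dimensional decay rate from \eqref{eq: egnv-matern-cov-decay} reads $\lambda_j \lesssim j^{-1 - 2\nu/d} = j^{-1 - 1/d}$, which is weaker than the claimed $j^{-2}$ for $d \geq 2$ but matches it exactly when $d=1$. The honest route, therefore, is to carry out the tensor-product counting argument carefully: the correct asymptotic for sorted products of $k^{-2}$-sequences in fixed dimension $d$ is $\lambda_j \simeq j^{-2}(\log j)^{2(d-1)}$, so if the lemma as stated asserts the clean bound $\lambda_j \lesssim j^{-2}$ uniformly in $d$, I expect that the intended reading is the one-dimensional or coordinatewise decay, and the tensor structure together with a Weyl-type eigenvalue interlacing argument is what must be reconciled. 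The genuinely hard step is thus pinning down whether the claimed exponent reflects per-coordinate decay or a sorted multidimensional count, and supplying the lattice-point estimate that bridges the two.
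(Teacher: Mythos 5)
Your proposal follows the paper's own proof almost step for step: the paper establishes \eqref{eq: egnv-real-and-positive} by directly invoking Theorem~2 of \citep{dietrich_fast_1997} (your ``classical exact embedding'' fact, which is indeed why no padding parameters $J_i$ appear), obtains the one-dimensional decay by identifying the exponential covariance with the Mat\'ern covariance at $\nu = 1/2$ and applying \eqref{eq: egnv-matern-cov-decay} with $d=1$, and then tensorises exactly as you do, via $\mathbf{s}^{nD} = \sigma^{-n}\, \mathbf{s}^{1D} \otimes \cdots \otimes \mathbf{s}^{1D}$ and $F^{nD} = F^{1D} \otimes \cdots \otimes F^{1D}$, so that $\boldsymbol{\lambda}^{nD} = \boldsymbol{\lambda}^{1D} \otimes \cdots \otimes \boldsymbol{\lambda}^{1D}$. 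The step at which you stall --- passing from per-coordinate $j^{-2}$ decay to decay of the \emph{sorted} product sequence --- is precisely where the paper stops as well: it simply asserts that the $n$-dimensional eigenvalues ``decay at the same rate'' and refers to \citep{charrier_strong_2012}, with no lattice-point counting supplied. Your log-factor analysis of that step is correct: if $\lambda_j^{1D} \simeq j^{-2}$ (which holds for the exponential kernel, with matching lower bound), then the counting function satisfies $N(\epsilon) \simeq \epsilon^{-1/2}\left(\log \epsilon^{-1}\right)^{d-1}$ and the sorted products decay like $j^{-2}(\log j)^{2(d-1)}$, so the clean bound \eqref{eq: egnv-exp-cov-decay} holds for $d \geq 2$ only up to logarithmic corrections (equivalently, $\lambda_j \lesssim j^{-2+\delta}$ for every $\delta > 0$, the form commonly stated in the literature); downstream this would propagate harmless logarithmic factors into Corollary~\ref{cor: error-1-norm-cov} without changing the complexity conclusions. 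So what you flagged is not a gap in your argument relative to the paper's, but a genuine imprecision that the paper's proof shares and hides behind the citation. One caution on your proposed shortcut: Lemma~\ref{thm: matern-egnv-decay} cannot be applied ``directly in dimension $d$'' to \eqref{eq: p-norm-cov-function} with $p=1$, since the Mat\'ern family \eqref{eq: matern-cov-function} is isotropic (it depends on $\|\mathbf{t}\|_2$) while the separable exponential is not; your instinct to treat that route as merely heuristic, and to regard the tensor-product counting as the honest argument, was the right one.
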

\begin{proof}
    First, Eq. \eqref{eq: egnv-real-and-positive} is a direct application of Theorem 2 in \citep{dietrich_fast_1997}. This implies that no padding is necessary, as the embedding matrix is symmetric positive definite.

    Second, for Eq. \eqref{eq: egnv-exp-cov-decay}, note that in the one-dimensional case, the exponential covariance is equivalent with the Mat\'ern covariance with $\nu = 0.5$. Hence, we can apply Eq. \eqref{eq: egnv-matern-cov-decay} with no periodisation to infer that the corresponding eigenvalues satisfy:
        \begin{equation*}
            \left(\prod_{i=1}^d m_i^{-1} \right) \, \Lambda_j^{{\mathrm{ord}}}  \lesssim j^{-2}.
        \end{equation*}

    To extend this to higher dimensions, we exploit the fact that the exponential covariance with norm $p=1$ is separable, so that:
    \begin{equation*}
        \mathbf{s}^{dD} = \frac{1}{\sigma^n} \mathbf{s}^{1D} \otimes \dotsc \otimes \mathbf{s}^{1D},
    \end{equation*}
    where $\mathbf{s}^{dD}$ is the first {column} of the $d$-dimensional embedding matrix $S$, and $\otimes$ denotes the Kronecker product. Further, the discrete Fourier matrix satisfies \citep{cooley_finite_1969}:
    \begin{equation*}
        F^{ndD} = F^{1D} \otimes \dotsc \otimes F^{1D},
    \end{equation*}
    so that:
    \begin{equation}
    \label{eq: n-dim-egnv}
        \boldsymbol{\Lambda}^{dD} = \boldsymbol{\Lambda}^{1D} \otimes \dotsc \otimes \boldsymbol{\Lambda}^{1D}.
    \end{equation}
    {We then use Theorem 1 in \cite{krieg_tensor_2018} together with the decay rate of the one-dimensional eigenvalues above to infer that:
    \begin{equation*}
        \left(\prod_{i=1}^d m_i^{-1} \right) \Lambda^{dD, {{\mathrm{ord}}}}_j \lesssim j^{-2} (\log j)^{2(d-1)},
    \end{equation*}
    with a constant that depends on the dimension $d$.
    }
This completes the proof.
\end{proof}

{
\begin{Remark}\label{rem:KL_CE} As the results in this section show, there is an intricate link between the circulant embedding method and the KL expansion: the discrete and continuous eigenvalues decay at the same rate, and the (scaled) discrete eigenvalues converge to the continuous eigenvalues. Furthermore, consider the KL expansion of the continuous random field $Z$, given by:
\begin{equation}
\label{eq: kl-expansion}
    Z(\mathbf{x}, \omega) = \sum_{j=1}^\infty \sqrt{\lambda}_j b_j(\mathbf{x}) \xi_j(\omega),
\end{equation}
where $\{\xi_j\}_{j \in \mathbb{N}} \sim \mathcal{N}(0,1)$ i.i.d., and $\{\lambda_j\}_{j \in \mathbb{N}}$ and $\{b_j\}_{j \in \mathbb{N}}$ are the eigenvalues and $L^2(D)$-orthonormal eigenfunctions of the covariance operator, respectively.
{Bachmayr et al. show in \citep[Section 3]{bachmayr_representations_2018} that, when extending the domain $D \subset \mathbb{R}^d$ of a random field to a periodic domain $\mathbb{T}^d$, the eigenfunctions of the corresponding periodized covariance are explicitly known trigonometric functions (see \citep[Eq. (57)]{bachmayr_representations_2018}), and the eigenvalues are Fourier coefficients of an even and $\mathbb{T}$-periodic function (see \citep[Eq. (59)]{bachmayr_representations_2018}). By restricting back to the original domain $D$, we obtain an alternative series (KL-type) expansion of the random field on the original domain $D$. Evaluated on a grid, the expressions for the discrete random field $\mathbf{Z}$ obtained from the circulant embedding method and this KL-type expansion coincide due to aliasing effects. Note however, that the smoothing technique introduced in this work and the truncation of the KL-type expansion (similar to \cite{teckentrup_further_2013}) are not equivalent.}
\end{Remark}
}

\subsection{Smoothing}
\label{sec: smoothing}

Here, we focus on the case when the random field $Z$ we aim to sample from is extremely oscillatory, so that its fluctuations can only be resolved on a fine mesh, which can render associated simulations computationally expensive. To this end, we smooth the generated realisation by setting the smallest eigenvalues in the factorisation of the circulant embedding matrix, which correspond to highly oscillatory terms, to zero. This approach is similar in nature to the level-dependent truncation of the KL-expansion proposed in \citep{teckentrup_further_2013}.

With this in mind, recall that given a zero-mean stationary Gaussian random field $Z$, we now have $\mathbf{Z} = (G \Lambda^{\frac{1}{2}} \boldsymbol{\xi})_R$, for $\boldsymbol{\xi} \sim \mathcal{N}(0, I)$, a sample from the discrete representation of $Z$. Here, $\Lambda$ is a diagonal matrix whose entries are the eigenvalues $\{\Lambda_j\}_{j=1}^{s}$ corresponding to the embedding matrix $S \in \mathbb{R}^{s \times s}$.  {We define the  truncated eigenvalues $\{\tilde{\Lambda}_j^{\text{ord}}\}_{j=1}^s$ as follows:
\begin{equation}
\label{eq: tau-definition}
    \tilde{\Lambda}_j^\text{ord} = 
    \begin{cases}
        \Lambda_j^\text{ord} &\text{ if } j \leq \tau, \\
        0 &\text{ if } j > \tau,
    \end{cases}
\end{equation}
such that $\tau$ denotes the number of smallest eigenvalues we discard.} Then $\tilde{\Lambda}$ is a diagonal matrix whose entries are the truncated eigenvalues $\{\Lambda^{{\mathrm{ord}}}_j\}_{j=1}^{s-{\tau}}$ and $\mathbf{0} \in \mathbb{R}^{{\tau}}$. Further, let $\tilde{\mathbf{Z}} = (G \tilde{\Lambda}^{\frac{1}{2}} \boldsymbol{\xi})_R$ represent the corresponding smoothed realisation. 

Then, the underlying smoothing procedure is as follows:
\begin{enumerate}
	\item sort eigenvalues $\{\Lambda_j\}_{j=1}^s$ in non-increasing order {to obtain $\{\Lambda_j^\mathrm{ord}\}_{j=1}^s$};
	\item choose truncation index ${\tau}$ such that $\mathbb{E} \left[\|\mathbf{Z} - \tilde{\mathbf{Z}}\| \right]$ is less than a given threshold $\varepsilon$;
	\item compute smoothed realisation $\tilde{\mathbf{Z}}$ using the same $\boldsymbol{\xi}$ as in the definition of $\mathbf{Z}$.
\end{enumerate}
The choice of norm for the smoothing error $\mathbb{E} \left[\|\mathbf{Z} - \tilde{\mathbf{Z}}\| \right]$ is application dependent. Throughout the paper, we use the $L^\infty$ norm, as this is the natural norm that emerges in the finite element discretisation error analysis for the application at hand, as shown in Theorem \ref{thm: error-functional-sample} and Corollary \ref{cor: error-functional-egnv}. For different applications, one can adapt these results using the equivalence of norms. {However, this will lead to different convergence rates due to the presence of {\color{red}s}-dependent factors.}

For example, Figure \ref{fig: smooth-and-non-smooth-comp} depicts a Gaussian field sample with covariance function \eqref{eq: p-norm-cov-function}, where we take $D=[0,1]^2$ {discretised using $m_1 = m_2 = 32$}, norm $p=1$ and parameters $\sigma^2=1$ and $\lambda = 0.1$. In addition, in Figures \ref{fig: surface-plot-smoothed} and \ref{fig: contour-plot-smoothed} we drop ${\tau} = \frac{7s}{8}$ eigenvalues, yielding a noticeably smoother approximation of the sample in Figures \ref{fig: surface-plot-not-smoothed} and \ref{fig: contour-plot-not-smoothed}.

\begin{figure*}[t]
        \centering
        \begin{subfigure}[t]{0.49\textwidth}
            \centering
            \includegraphics[width=1\textwidth]{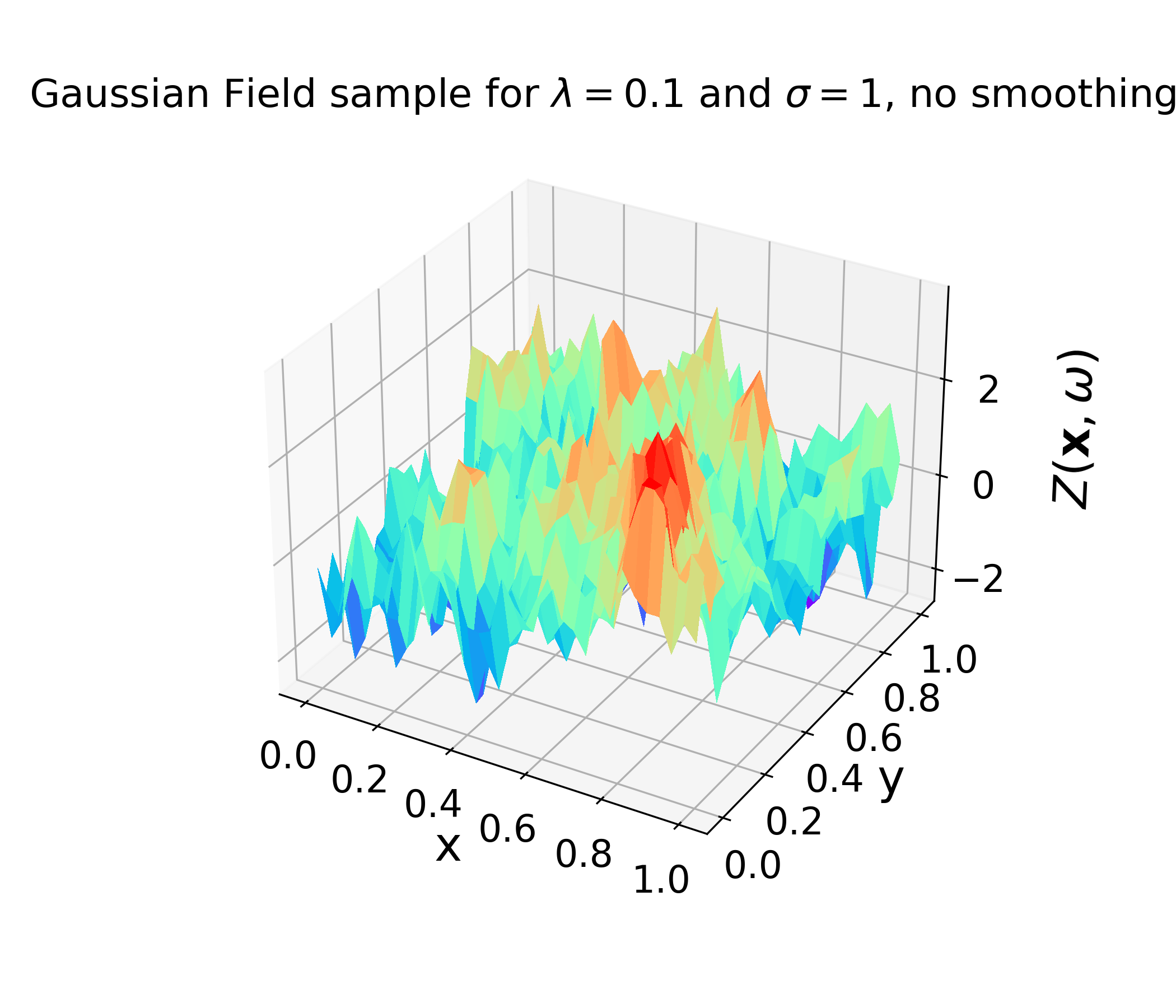}
            \caption{Surface plot of $\mathbf{Z}(\mathbf{x}, \omega)$.}    
            \label{fig: surface-plot-not-smoothed}
        \end{subfigure}
        \hfill
        \begin{subfigure}[t]{0.49\textwidth}  
            \centering 
            \includegraphics[width=\textwidth]{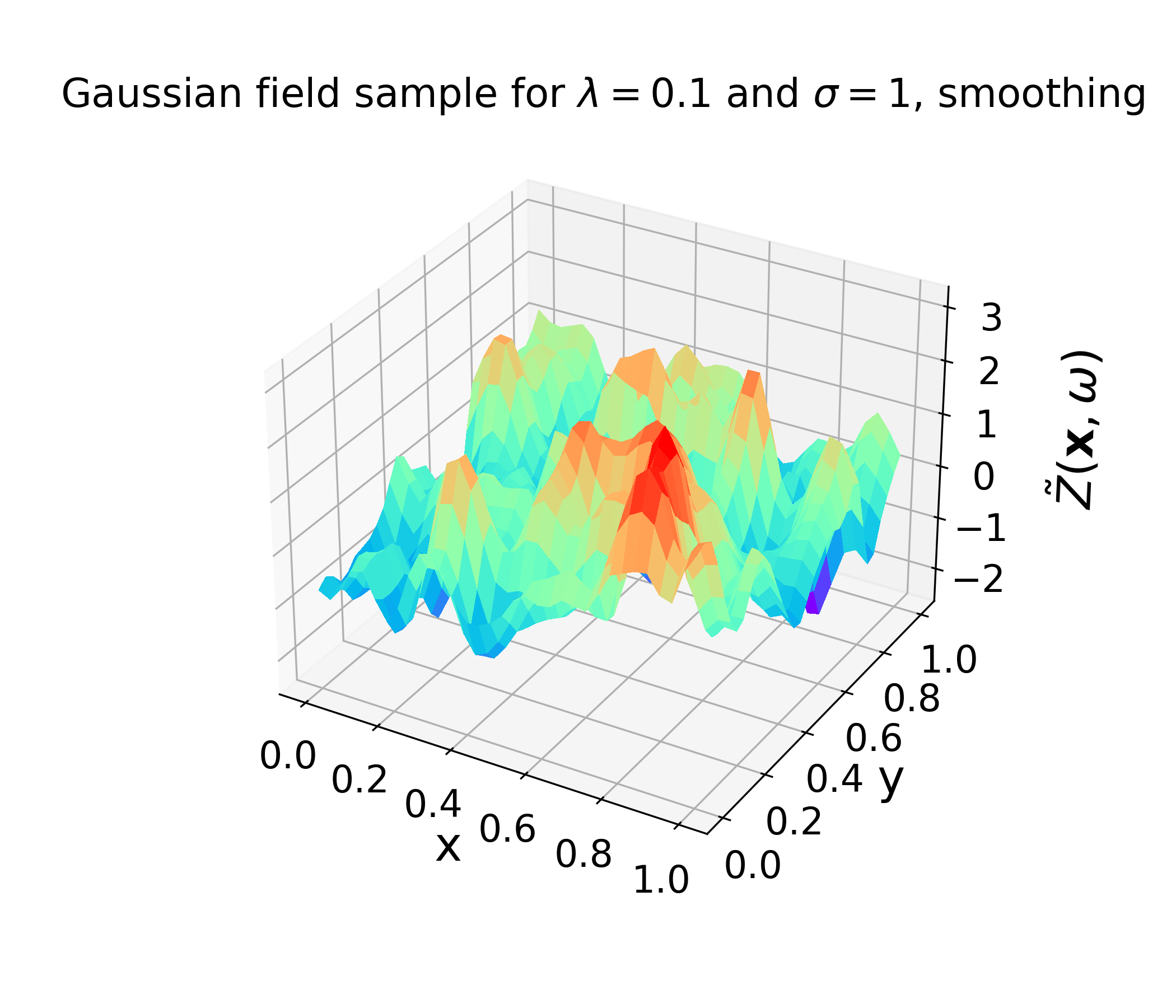}
            \caption{Surface plot of $\tilde{\mathbf{Z}}(\mathbf{x}, \omega)$.}    
            \label{fig: surface-plot-smoothed}
        \end{subfigure}
        \vskip\baselineskip
        \begin{subfigure}[t]{0.5\textwidth}   
            \centering 
            \includegraphics[width=\textwidth]{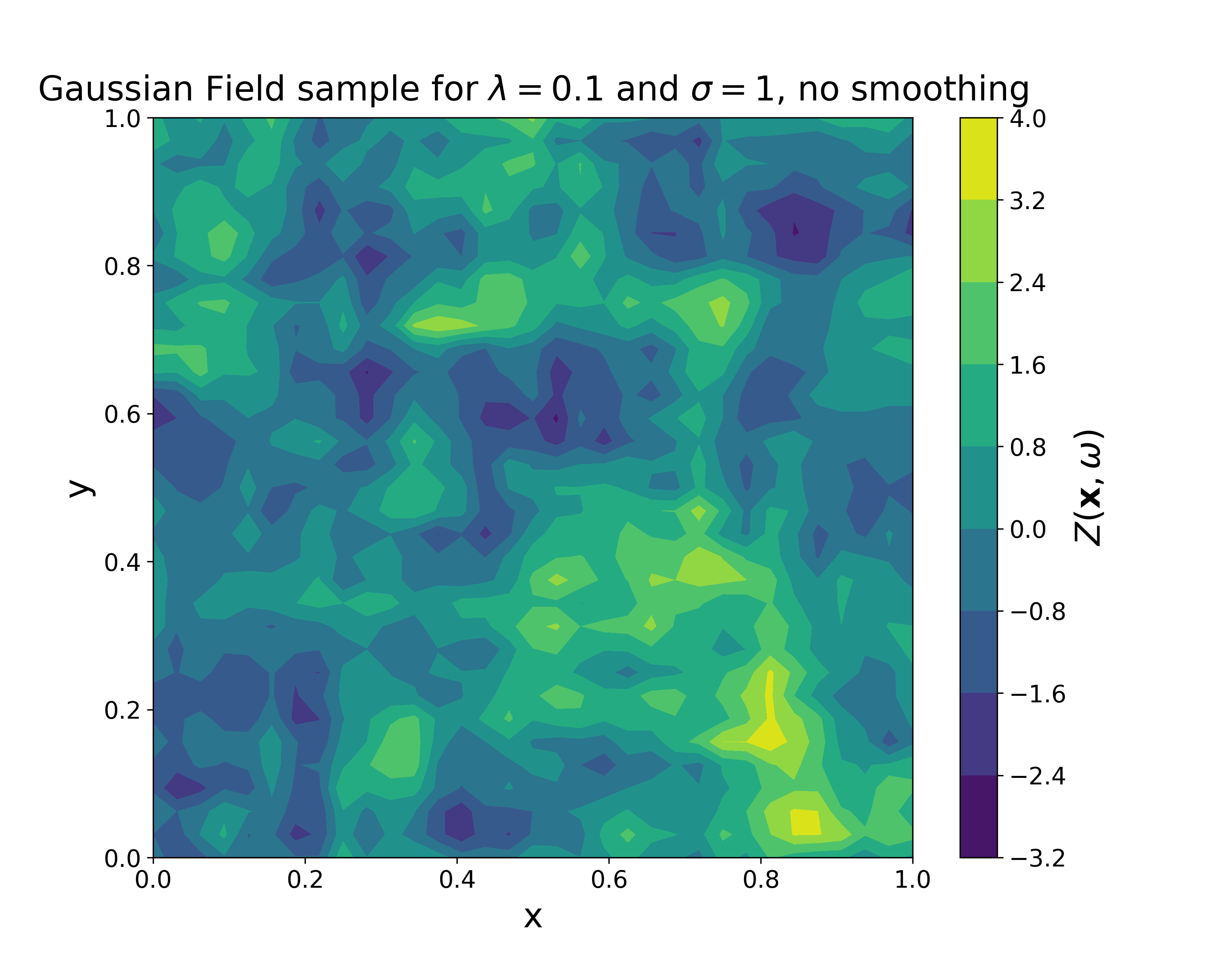}
            \caption{Contour plot of $\mathbf{Z}(\mathbf{x}, \omega)$.}    
            \label{fig: contour-plot-not-smoothed}
        \end{subfigure}
        \hfill
        \begin{subfigure}[t]{0.49\textwidth}   
            \centering 
            \includegraphics[width=\textwidth]{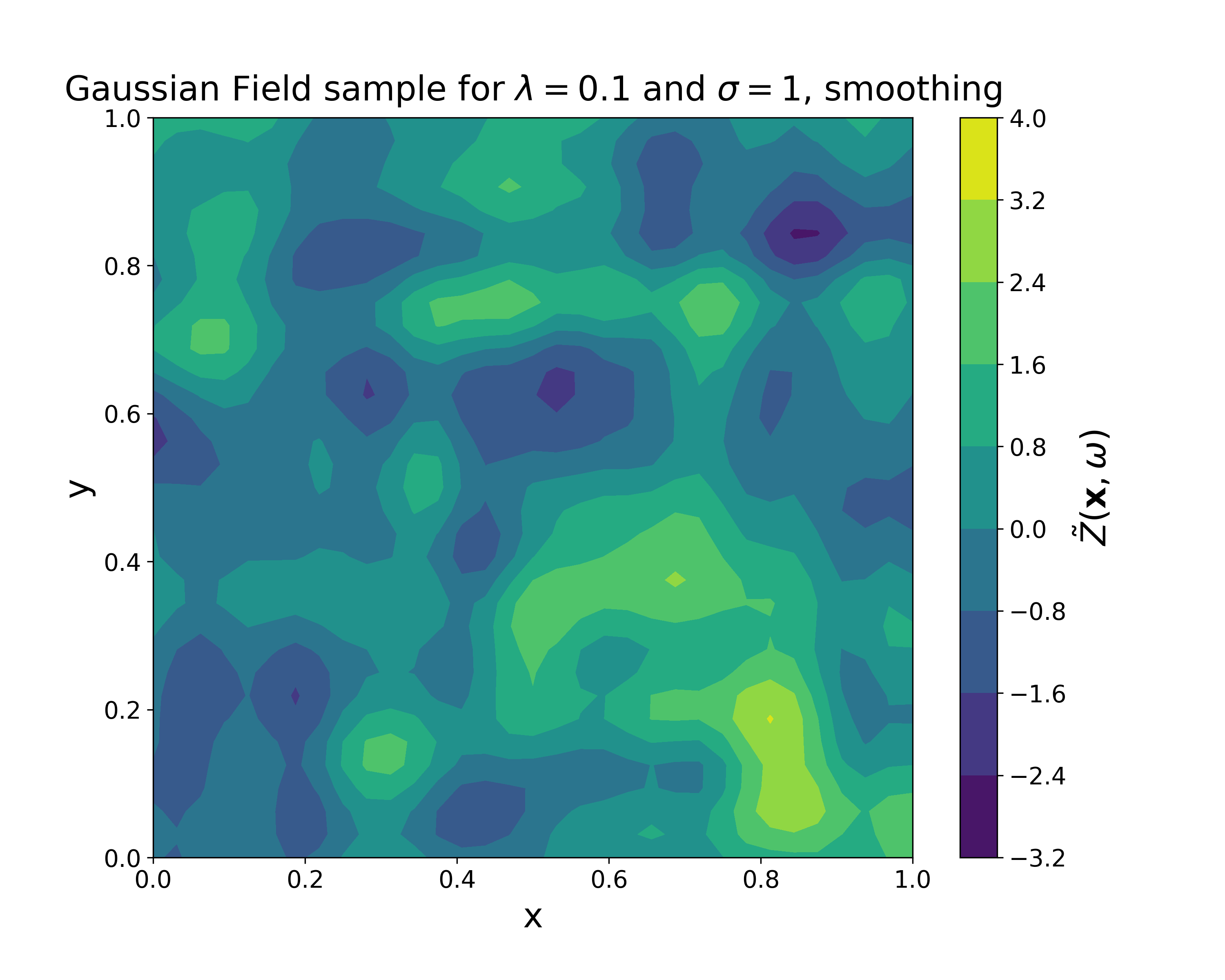}
            \caption{Contour plot of $\tilde{\mathbf{Z}}(\mathbf{x}, \omega)$.}
            \label{fig: contour-plot-smoothed}
        \end{subfigure}
        \caption{Plots of Gaussian field samples $\mathbf{Z}(\mathbf{x}, \omega)$ and $\tilde{\mathbf{Z}}(\mathbf{x}, \omega)$ obtained using circulant embedding for the exponential covariance \eqref{eq: p-norm-cov-function} with {$m_1 = m_2 = 32$}, $p=1$, $\sigma^2=1$ and $\lambda=0.1$.} 
        \label{fig: smooth-and-non-smooth-comp}
\end{figure*}

To be able to implement this algorithm in practice, we require a theoretical bound on the error between the original and the smoothed samples $\mathbf{Z}$ and $\tilde{\mathbf{Z}}$, respectively. This is given in the following theorem:
\begin{Theorem}
\label{thm: sample-error}
Let $Z$ be a $d$-dimensional zero-mean stationary Gaussian random field, and let $\mathbf{Z}$ be the discrete representation of $Z$ obtained using the circulant embedding method on a uniform grid $\mathcal{T}$ with mesh sizes $m_i$, $i=1, \dotsc, d$, of the domain $D \subset \mathbb{R}^d$. Suppose the embedding matrix $S \in \mathbb{R}^{s \times {s}}$ is symmetric positive definite upon smooth periodisation with padding parameters $\{J_i\}_{i=1}^d \subseteq \mathbb{N}_0^d$. Let {$\tau$} be {as in Eq. \eqref{eq: tau-definition}} and $\tilde{\mathbf{Z}}$ be the resulting smoothed sample. Then, for any $p \in [1, \infty)$:
\begin{equation*}
	\mathbb{E}\left[\|\mathbf{Z} - \tilde{\mathbf{Z}}\|^p_\infty \right] \lesssim s^{-\frac{p}{2}} \left(\Lambda_{{s-{\tau}+1}}^{{\mathrm{ord}}}\right)^\frac{p}{2} \, {\tau}^p,
\end{equation*}
where $s = 2^d \prod_{i=1}^d (m_i + J_i)$.
\end{Theorem}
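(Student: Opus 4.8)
The plan is to reduce the whole statement to an elementary moment bound for a sum of half-normal variables, using only two structural facts: that $\mathbf{Z}$ and $\tilde{\mathbf{Z}}$ are built from the \emph{same} Gaussian vector $\boldsymbol{\xi}$, and that the real factor matrix $G$ has uniformly small entries. First I would write the difference as a sub-vector of a single matrix--vector product. Since $\mathbf{Z} = (G\Lambda^{1/2}\boldsymbol{\xi})_R$ and $\tilde{\mathbf{Z}} = (G\tilde{\Lambda}^{1/2}\boldsymbol{\xi})_R$ share $\boldsymbol{\xi}$, we have $\mathbf{Z}-\tilde{\mathbf{Z}} = \bigl(G(\Lambda^{1/2}-\tilde{\Lambda}^{1/2})\boldsymbol{\xi}\bigr)_R$, where the diagonal matrix $\Lambda^{1/2}-\tilde{\Lambda}^{1/2}$ carries $\sqrt{\lambda_j}$ in the discarded positions $j = s-k+1,\dots,s$ and zero elsewhere. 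The $\infty$-norm of a sub-vector is dominated by that of the full vector, so $\|\mathbf{Z}-\tilde{\mathbf{Z}}\|_\infty \le \max_{i}|W_i|$ with $W_i = \sum_{j=s-k+1}^{s} G_{ij}\sqrt{\lambda_j}\,\xi_j$; crucially, only the discarded eigenvalues contribute.

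Next I would bound the entries of $G$. Because $G = \Re(F)+\Im(F)$ and every entry of the unitary Fourier matrix satisfies $|F_{ij}| = 1/\sqrt{s}$ (this holds verbatim in the tensorised $d$-dimensional construction, where $|F_{ij}| = \prod_{l} 1/\sqrt{2(m_l+J_l)} = 1/\sqrt{s}$), each entry of $G$ equals $\frac{1}{\sqrt{s}}(\cos\theta_{ij}+\sin\theta_{ij})$ for some phase $\theta_{ij}$, whence $|G_{ij}| \le \sqrt{2}/\sqrt{s}$. Applying the triangle inequality then gives a bound that is uniform in $i$,
\[
\max_i |W_i| \;\le\; \frac{\sqrt{2}}{\sqrt{s}}\,\Bigl(\max_{j=s-k+1,\dots,s}\sqrt{\lambda_j}\Bigr)\sum_{j=s-k+1}^{s}|\xi_j|,
\]
so the maximum over grid points disappears at no cost and the factors $s^{-1/2}$ and $\max_j\sqrt{\lambda_j}$ already sit in the right places.

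Finally I would raise to the power $p$, take expectations, and control the $p$-th moment of the sum of the $k$ half-normal variables $|\xi_j|$ by Minkowski's inequality: $\bigl\|\sum_{j}|\xi_j|\bigr\|_{L^p} \le \sum_j \bigl\||\xi_j|\bigr\|_{L^p} = k\,(\mathbb{E}[|\xi_1|^p])^{1/p}$, so that $\mathbb{E}\bigl[(\sum_j|\xi_j|)^p\bigr] \le k^p\,\mathbb{E}[|\xi_1|^p]$. Since $\mathbb{E}[|\xi_1|^p]$ is a finite constant depending only on $p$, collecting the factors yields exactly $\mathbb{E}[\|\mathbf{Z}-\tilde{\mathbf{Z}}\|_\infty^p] \lesssim s^{-p/2}(\max_j\sqrt{\lambda_j})^p\,k^p$, with the implied constant independent of $s$, $k$, the $m_i$, the $J_i$ and the eigenvalues.

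I do not expect a genuine obstacle, as the analysis is deliberately crude. The only point deserving care is the uniform entry bound $|G_{ij}|\le\sqrt{2}/\sqrt{s}$ in the $d$-dimensional, padded and smoothly periodised setting: $G$ is \emph{not} literally the Kronecker product of one-dimensional Hartley matrices, so one must argue at the level of the Fourier matrix $F$ (where tensorisation is exact and $|F_{ij}|=1/\sqrt s$) and only then pass to real and imaginary parts. I would also remark that replacing the triangle inequality by a maximum-of-Gaussians estimate would sharpen the $k^p$ factor to $k^{p/2}(\ln s)^{p/2}$, so the stated bound is not tight in $k$; the simpler form above is, however, exactly what the subsequent MLMC complexity analysis requires.
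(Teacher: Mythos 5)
Your proposal is correct, and its skeleton coincides with the paper's proof: the same decomposition $\mathbf{Z}-\tilde{\mathbf{Z}} = \bigl(G(\Lambda^{1/2}-\tilde{\Lambda}^{1/2})\boldsymbol{\xi}\bigr)_R$ exploiting the shared $\boldsymbol{\xi}$, the same reduction of the sub-vector norm to the full vector, and the same entry bound $|g_{ij}|\le\sqrt{2}/\sqrt{s}$ obtained by tensorisation of the one-dimensional DFT together with $|\Re(z)+\Im(z)|\le\sqrt{2}|z|$ (your caution that one must tensorise at the level of $F$, not $G$, is exactly how the paper argues, citing the Kronecker structure of the multi-dimensional DFT). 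Where you genuinely diverge is the final moment estimate: you dispatch $\mathbb{E}\bigl[(\sum_{j=s-k+1}^{s}|\xi_j|)^p\bigr]\le k^p\,\mathbb{E}[|\xi|^p]$ in one line via Minkowski's inequality in $L^p$, whereas the paper expands the $p$-th power of the sum for integer $p$, organises the resulting mixed moments into a finite set $\Xi^p$, applies Jensen's inequality termwise, evaluates the half-normal moments $\mathbb{E}[|\xi|^{2n}]$ and $\mathbb{E}[|\xi|^{2n+1}]$ explicitly, and only then extends from $p\in\mathbb{N}$ to $p\in[1,\infty)$ by a further application of Jensen. Your route is strictly cleaner: it avoids the combinatorial bookkeeping entirely, needs no explicit folded-normal moment formulae, makes no use of independence of the $\xi_j$, and covers all real $p\ge 1$ directly without the integer-then-interpolate step. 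The two arguments yield the identical bound with the same dependence on $s$, $k$ and the discarded eigenvalues; your closing remark that a maximum-of-Gaussians estimate would improve $k^p$ to roughly $k^{p/2}(\ln s)^{p/2}$ is a fair observation about non-sharpness, which applies equally to the paper's version and is immaterial for the subsequent MLMC complexity analysis.
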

\begin{proof}
Using the definition of $\mathbf{Z}$ and $\tilde{\mathbf{Z}}$, note that:
\begin{align*}
    \mathbb{E}\left[\, \|\mathbf{Z} - \tilde{\mathbf{Z}}\|^p_\infty\right] &= \mathbb{E}\left[\left\|\left(G\Lambda^{\frac{1}{2}} \boldsymbol{\xi} \right)_R - \left(G \Tilde{\Lambda}^{\frac{1}{2}} \boldsymbol{\xi} \right)_R\right\|^p_\infty\right]
    \nonumber \\
    &\lesssim \mathbb{E}\left[\left\|G \left(\Lambda^{\frac{1}{2}} - \Tilde{\Lambda}^{\frac{1}{2}}\right) \boldsymbol{\xi} \right\|^p_\infty\right]
    \nonumber \\ 
    &= \mathbb{E}\left[\max_{i=1, \dotsc, s} \left|\sum_{j=s-{\tau}+1}^s g_{ij} \sqrt{\Lambda_j^{{\mathrm{ord}}}} \xi_j \right|^p \,\right] \nonumber \\
    &\lesssim \mathbb{E}\left[\max_{i=1, \dotsc, s} \left(\max_{j=s-{\tau}+1, \dotsc s} |g_{ij}| \, \sqrt{\Lambda_{{s-{\tau}+1}}^{{\mathrm{ord}}}} \sum_{j=s-{\tau}+1}^s |\xi_j| \right)^p \,\right] \nonumber \\
    &= \max_{i=1, \dotsc, s} \max_{j=s-{\tau}+1, \dotsc s} |g_{ij}|^p \left(\Lambda_{{s-{\tau}+1}}^{{\mathrm{ord}}}\right)^\frac{p}{2} \mathbb{E}\left[\left( \sum_{j=s-{\tau}+1}^s |\xi_j| \right)^p \,\right].
\end{align*}

Now, $g_{ij} = \Re(f_{ij}) + \Im(f_{ij})$, where $f_{ij}$ is an entry in the $d$-dimensional unitary Discrete Fourier Transform (DFT). This is a tensor product of $d$ one-dimensional unitary DFT's \citep{cooley_finite_1969}, whose entries are given in Eq. \eqref{eq: Fourier-matrix-entries}, so that the following holds:
\begin{equation}
\label{eq: fourier-entry-bound}
    |f_{ij}| \leq \frac{1}{\sqrt{s}}.
\end{equation}

Further, applying the identity $|\Re(z) + \Im(z)| \leq \sqrt{2} |z|$, for any $z \in \mathbb{C}$, to $g_{ij} = \Re(f_{ij}) + \Im(f_{ij})$, together with Eq. \eqref{eq: fourier-entry-bound} we derive the following:
\begin{equation*}
     \max_{i=1, \dotsc, s} \max_{j=s-{\tau}+1, \dotsc s} |g_{ij}| \leq \sqrt{\frac{2}{s}},
\end{equation*}
which yields:
\begin{equation*}
    \mathbb{E}\left[\, \|\mathbf{Z} - \tilde{\mathbf{Z}}\|^p_\infty\right] \lesssim s^{-\frac{p}{2}} \left(\Lambda_{{s-{\tau}+1}}^{{\mathrm{ord}}}\right)^\frac{p}{2} \mathbb{E}\left[\left( \sum_{j=s-{\tau}+1}^s |\xi_j| \right)^p \,\right].
\end{equation*}
Next note that, using the linearity of expectation, we obtain the following:
\begin{equation*}
     \mathbb{E}\left[\left( \sum_{j=s-{\tau}+1}^s |\xi_j| \right)^p \,\right] = \sum_{i_1=s-{\tau}+1}^s \dotsc \sum_{i_p=s-{\tau}+1}^s \mathbb{E}\left[\prod_{m=1}^p |\xi_{i_m}|\right].
\end{equation*}
Since some elements in the sum above are correlated, we observe that every term is, in fact, an element of the following set:
\begin{equation*}
    \Xi^p = \left\{\prod_{i=1}^m \mathbb{E}[|\xi^{p_i}|] : p_i \in \{1, 2, \dotsc, p\}, i=1, \dotsc, m, \sum_{i=1}^m p_i = p \right\},
\end{equation*}
{and we denote them by $q_i$}, where $\xi \sim \mathcal{N}(0,1)$. Hence, we can write:
\begin{align}
\label{proof: thm-1-ineq}
     \mathbb{E}\left[\left( \sum_{j=s-{\tau}+1}^s |\xi_j| \right)^p \,\right] &\leq {\tau}^p \max_{m=1, \dotsc, p} \, \, \max_{i_m = s-{\tau}+1, \dots, s} \mathbb{E}\left[\prod_{m=1}^p |\xi_{i_m}|\right] \nonumber \\
     &= {\tau}^p \max_{q_i \in \Xi^p} q_i \nonumber \\
     &= {\tau}^p q^*. 
\end{align}
In the second step, we used that $\mathbb{E}\left[\prod_{m=1}^p |\xi_{i_m}|\right] \in \Xi^p$ by definition of the set $\Xi^p$. Since $\Xi^p$ is finite, its maximum element, which we denote by $q^*$, will also be an element of $\Xi^p$. {Then, let $p_i^*$ and $m^*$ be the exponents and their number, respectively, for which:
$$
    q^* = \prod_{i=1}^{m^*} \mathbb{E}[|\xi|^{p_i^*}],
$$}
with $\sum_{i=1}^{m^*} p_i^* = p$ and $p_i^* \in \{1, 2, \dotsc, p\}, i=1, \dotsc, m^*$. {Using this, the inequality in \eqref{proof: thm-1-ineq} becomes:}
$$
    \mathbb{E}\left[\left( \sum_{j=s-{\tau}+1}^s |\xi_j| \right)^p \,\right] \leq {\tau}^p \prod_{i=1}^{m^*} \mathbb{E}[|\xi|^{p_i^*}]
$$
Further, using Jensen's inequality for $\varphi(x) = |x|^{\frac{p}{p^{{*}}_i}}$, we obtain:
\begin{equation*}
    \mathbb{E}[|\xi|^{p^{{*}}_i}] \leq \mathbb{E}[|\xi|^p]^{\frac{p^{{*}}_i}{p}},
\end{equation*}
for $i=1, \dotsc, m^{{*}}$, so that:
\begin{equation*}
    \prod_{i=1}^m \mathbb{E}[|\xi|^{p_i^{{*}}}] \leq \mathbb{E}[|\xi|^p]^{\frac{\sum_{i=1}^{m^{{*}}} p_i^{{*}}}{p}} = \mathbb{E}[|\xi|^p].
\end{equation*}
Putting everything together, we determine:
\begin{equation*}
    \mathbb{E}\left[\left( \sum_{j=s-{\tau}+1}^s |\xi_j| \right)^p \,\right] \leq {\tau}^p \, \, \mathbb{E}[|\xi|^p], \quad p \in \mathbb{N}.
\end{equation*}

From \citep{elandt_folded_1961}, we know that if $\xi \sim \mathcal{N}(\mu, \sigma^2)$, then $|\xi|$ is a folded normal random variable. For the special case when $\mu = 0$, $|\xi|$ is a half-normal random variable, whose higher moments are:
\begin{align*}
    \mathbb{E}[|\xi|^{2n}] &= \frac{(2n)!}{n! \, 2^n}, \nonumber \\
    \mathbb{E}[|\xi|^{2n+1}] &= \sqrt{\frac{2}{\pi}} 2^n n! \, ,
\end{align*}
for $n \in \mathbb{N} \cup \{0\}$ and $\sigma =1$.
Using these formulae, we can infer the following:
\begin{equation*}
     \mathbb{E}\left[\left( \sum_{j=s-{\tau}+1}^s |\xi_j| \right)^p \,\right] \leq 
     \begin{cases}
         & \frac{(2n)!}{n! \, 2^n} \, \, k^{2n}, \qquad  \quad \text{ if } p = 2n, \\
         & \sqrt{\frac{2}{\pi}} 2^n n! \, \, k^{2n+1}, \, \, \text{ if } p = 2n +1. 
     \end{cases}
\end{equation*}
Re-arranging, this reduces to:
\begin{equation*}
    \mathbb{E}\left[\left( \sum_{j=s-{\tau}+1}^s |\xi_j| \right)^p \,\right] \lesssim {\tau}^p,
\end{equation*}
with constant dependent on $p$, but not on ${\tau}$. 

Combining everything gives us the claimed result for $p \in \mathbb{N}$, but this can be extended to $p \in [1, \infty)$ using Jensen's inequality. The claim of the Theorem then follows.
\end{proof}

In practice, we observe that the error $\mathbb{E}\left[\|\mathbf{Z} - \tilde{\mathbf{Z}}\|_\infty\right]$ decays much faster {for small values of ${\tau}$}. This is to be expected, as the case ${\tau} = 0$ yields $\mathbf{Z} - \tilde{\mathbf{Z}} = \mathbf{0}$. As the number of eigenvalues we discard increases, we recover the decay rate that we theoretically derive. What is more, this bound is directly linked with the largest eigenvalue we keep, so that the faster the eigenvalues of the embedding matrix decline, the faster the smoothing error decreases.

\section{Multilevel Monte Carlo Methods}
\label{sec: mlmc}

In this section, we focus on an individual application of the smoothing technique, namely Multilevel Monte Carlo methods for PDEs with random coefficients. We first introduce the PDE model of interest and outline key features of the MC and MLMC estimators, where we make use of circulant embedding methods for sampling from the aforementioned random parameter. Finally, we describe how to integrate the smoothing approach in these methods, and give estimates on the error introduced by smoothing samples from the random field and the complexity of the resulting estimators. 

\subsection{PDE with random coefficient model}
\label{sec: pde-model}

A mathematical model for simulating single-phase, stationary groundwater flow rests on Darcy's Law combined with the \textit{Law of conservation of mass} \citep{de_marsily_quantitative_1986}, which leads to a linear relationship between the pressure difference of the groundwater and its flow rate, as follows:
\begin{equation}
\label{eq: pde-model}
    - \nabla \cdot (k(\mathbf{x}, \omega) \nabla u(\mathbf{x}, \omega)) = f(\mathbf{x}), \quad \mathbf{x} \in D.
\end{equation}
For simplicity, we let $D =  (0,1)^2$, and take the boundary conditions to be:
\begin{align*}
    u\rvert_{x_1=0} &= 1, \qquad u\rvert_{x_1=1}=0, \nonumber \\
    \frac{\partial u}{\partial \mathbf{n}} \biggr\rvert_{x_2=0} &= 0, \quad \frac{\partial u}{\partial \mathbf{n}} \biggr\rvert_{x_2=1} = 0.
\end{align*}
Here, $u$ denotes the pressure of the fluid, and $f \coloneqq - \nabla \cdot \mathbf{g}$, where $\mathbf{g}$ represents the source terms. Further, $k$ stands for the hydraulic conductivity, that is, the ease with which a fluid can move through porous media or fractures under a given pressure gradient. Moreover, we apply the flow cell boundary conditions, which comprise of Dirichlet boundary conditions, ensuring that the groundwater stream is from $x_1 = 0$ to $x_1 = 1$, and Neumann boundary conditions, forcing the flow to remain vertically inside the domain.

To account for uncertainties in the coefficient $k$, we assume that it is a log-normal random field, i.e. $k(\mathbf{x}, \omega) = \exp(Z(\mathbf{x}, \omega))$. Here, $Z$ is a zero-mean stationary Gaussian field, so that it has the structure outlined in Eq. \eqref{eq: rf-mean-cov}. 
One example of covariance function $C : [-1,1]^2 \rightarrow \mathbb{R}$ associated with the coefficient $k$ was suggested by Hoeksema and Kitanidis in \citep{hoeksema_analysis_1985}, and it has the form given in Eq. \eqref{eq: p-norm-cov-function}. In this instance, samples $Z(\cdot, \omega)$ are H\"{o}lder continuous with respect to $\mathbf{x}$, with H\"{o}lder exponent $\alpha < 1/2$, so that:
\begin{equation*}
    |Z(\mathbf{x}, \cdot) - Z(\mathbf{y}, \cdot)| \lesssim \|\mathbf{x} - \mathbf{y}\|^\alpha, \quad \forall \, \mathbf{x}, \mathbf{y} \in D.
\end{equation*}
Another example of $C$ commonly used in this application is the Mat\'ern covariance function given in Eq. \eqref{eq: matern-cov-function}. 

Generally, we are interested in computing quantities of interest $\mathbb{E}[Q]$ in the form of expected values of functionals $Q$ of the PDE solution $u$. An easily computed example of physical interest is the pressure of the water at a given point $\mathbf{x}^* \in D$ or the $L_2$ norm of the solution $\|u\|_{L_2(D)}$. 

\subsection{Finite Elements discretisation}
\label{sec: fem}
To compute quantities of interest $Q$ of the PDE solution $u$, we adopt the finite element method to obtain numerical approximations $u_h$ of $u$ and, subsequently $Q_h$ of $Q$. We do not dwell on the practical implementation, as we exploit the Python FEniCS software \citep{alnaes_fenics_2015, logg_automated_2012} for this. Rather, we focus on theoretical bounds on the error $\|u - u_h\|_{H^1(D)}$, as this is crucial in our simulations. For simplicity, we consider zero Dirichlet boundary conditions $u=0$ on $\partial D$ for the analysis, but we expect similar results to hold with the mixed boundary conditions presented above.

To this end, consider the variational formulation of the PDE \eqref{eq: pde-model} for a given sample $k(\mathbf{x}, \omega)$:
\begin{equation*}
    \int_D k(\cdot, \omega) \nabla u \cdot \nabla v \, \text{ d}\mathbf{x} = \int_D f \, v \, \text{d}\mathbf{x}, \quad \forall v \in \mathcal{V},
\end{equation*}
where $\mathcal{V}=H^1_0(D)$ is the space of test functions. Upon discretisation, we seek solutions to this equation in a subspace $\mathcal{V}_h \subset \mathcal{V}$ spanned by piece-wise linear polynomials. This is because the low regularity of the coefficient $k$ limits the accuracy we can achieve with a higher order polynomial basis. For a more detailed implementation of finite elements for this problem see e.g. \citep{cliffe_parallel_2000} or \citep{graham_quasi-monte_2011}, and for a standard cell-centred finite volume approach see \citep{cliffe_multilevel_2011}.

The error $\|u - {u}_h\|_{H^1(D)}$ is quantified in \citep[Proposition 3.13]{charrier_finite_2013}, where ${u}_h$ is the finite element approximation obtained by applying either the midpoint rule or the trapezoidal rule when assembling the resulting linear system. In particular, this takes into account both the finite element error and the quadrature error, and $u_h$ depends on $k$ only through its values at the grid points. The result is summarised below.
\begin{Proposition}
\label{prop: discretisation-error}
    Let $u(\cdot, \omega), {u}_h(\cdot, \omega) \in H^1(D)$ be as above. Then, for $0 < s < t \leq 1$ and $0 < h < 1$:
    \begin{equation*}
        \|u - {u}_h\|_{L^p(\Omega, H^1(D))} \lesssim h^s,
    \end{equation*}
    for any $p \in [1, \infty)$ and $s \neq \frac{1}{2}$, under the assumptions that:
    \begin{enumerate}[(A)]
        \item the domain $D \subset \mathbb{R}^d$, $d = 1,2$, is a bounded, convex, Lipschitz polygonal domain. \label{assmpt: domain}
        \item $k_{\min}(\omega) \coloneqq \min_{\mathbf{x} \in \bar{D}} k(\mathbf{x}, \omega) > 0$ almost surely and $1/k_{\min} \in L^p(\Omega)$, for all $p \in [1, \infty)$. \label{assmpt: B}
        \item $k \in L^p(\Omega, C^t(\bar{D}))$, for some $0 < t \leq 1$ and for all $p \in [1, \infty)$. \label{assmpt: C}
        \item $f \in H^{t-1}(D)$. \label{assmpt: final}
    \end{enumerate}
\end{Proposition}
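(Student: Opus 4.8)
The plan is to prove the estimate pathwise, i.e.\ for almost every fixed $\omega$, and then pass to the $L^p(\Omega)$ norm using the all-$p$ integrability granted by assumptions \ref{assmpt: B} and \ref{assmpt: C}. First I would establish pathwise well-posedness and regularity. By assumption \ref{assmpt: B}, for almost every $\omega$ one has $0 < k_{\min}(\omega) \leq k(\mathbf{x},\omega) \leq k_{\max}(\omega) < \infty$ on $\bar D$, so the bilinear form $a_\omega(u,v) = \int_D k(\cdot,\omega)\nabla u \cdot \nabla v \,\mathrm{d}\mathbf{x}$ is bounded and coercive on $H^1_0(D)$; Lax--Milgram then gives a unique weak solution $u(\cdot,\omega)$. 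Combining the convexity of $D$ from \ref{assmpt: domain}, the H\"older regularity $k \in C^t(\bar D)$ from \ref{assmpt: C}, and $f \in H^{t-1}(D)$ from \ref{assmpt: final}, elliptic regularity yields the shift $u(\cdot,\omega) \in H^{1+s}(D)$ for every $0 < s < t$, together with a bound $\|u(\cdot,\omega)\|_{H^{1+s}(D)} \lesssim C(\omega)$, where $C(\omega)$ is an explicit product of powers of $k_{\min}(\omega)^{-1}$, $k_{\max}(\omega)$, $\|k(\cdot,\omega)\|_{C^t(\bar D)}$ and $\|f\|_{H^{t-1}(D)}$. The exclusion $s \neq \tfrac12$ enters at this stage, reflecting the usual failure of the fractional regularity estimate at the critical Sobolev index.

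Next I would control the discretisation error for fixed $\omega$. Since $u_h$ is assembled using the midpoint or trapezoidal rule rather than the exact bilinear form, I would invoke the first Strang lemma to split the error into a best-approximation part and a quadrature-consistency part, roughly
\[
\|u - u_h\|_{H^1(D)} \lesssim \Big(\tfrac{k_{\max}(\omega)}{k_{\min}(\omega)}\Big)^{1/2}\inf_{v_h \in \mathcal{V}_h}\|u - v_h\|_{H^1(D)} + \frac{1}{k_{\min}(\omega)}\sup_{w_h \in \mathcal{V}_h}\frac{\lvert a_\omega(v_h,w_h) - a_{\omega,h}(v_h,w_h)\rvert}{\|w_h\|_{H^1(D)}} + (\text{load term}).
\]
For the first term I would use the standard piecewise-linear interpolation estimate $\inf_{v_h}\|u-v_h\|_{H^1(D)} \lesssim h^s\|u\|_{H^{1+s}(D)}$, valid precisely because of the $H^{1+s}$ regularity just obtained. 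For the consistency term I would bound the quadrature error of the midpoint/trapezoidal rule applied to integrands built from $k \in C^t$ and piecewise-linear functions, which again produces a factor $h^s$ with constant controlled by $\|k(\cdot,\omega)\|_{C^t(\bar D)}$; the load term is handled similarly using $f \in H^{t-1}(D)$ from \ref{assmpt: final}. Together these give the pathwise estimate $\|u(\cdot,\omega) - u_h(\cdot,\omega)\|_{H^1(D)} \lesssim C(\omega)\, h^s$ with a (possibly enlarged) constant $C(\omega)$ of the same structural form.

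Finally I would pass from the pathwise bound to the $L^p(\Omega)$ bound. Raising the pathwise estimate to the $p$-th power and taking expectations gives $\|u - u_h\|_{L^p(\Omega,H^1(D))} \lesssim h^s \,\big(\mathbb{E}[C(\omega)^p]\big)^{1/p}$, so it remains to show $C(\omega) \in L^p(\Omega)$ for every finite $p$. Since $C(\omega)$ is a product of powers of $k_{\min}(\omega)^{-1}$, $k_{\max}(\omega) \leq \|k(\cdot,\omega)\|_{C^t(\bar D)}$ and $\|k(\cdot,\omega)\|_{C^t(\bar D)}$, I would apply the generalised H\"older inequality to distribute the $L^p$ norm across these factors and then invoke assumption \ref{assmpt: B} ($1/k_{\min} \in L^q(\Omega)$ for all $q$) and assumption \ref{assmpt: C} ($k \in L^q(\Omega, C^t(\bar D))$ for all $q$). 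Because both integrability properties hold for \emph{every} finite exponent, one can always choose the H\"older exponents large enough to absorb each factor and conclude finiteness, yielding the claim.

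I expect the main obstacle to be the quadrature-consistency estimate: tracking the dependence of the midpoint/trapezoidal error on the low-regularity coefficient $k \in C^t(\bar D)$ with $t \leq 1$, and showing it contributes only at order $h^s$ uniformly in $\omega$ up to the factor $\|k(\cdot,\omega)\|_{C^t(\bar D)}$, is more delicate than the interpolation term and is where the argument of \citep{charrier_finite_2013} does the real work. The remaining bookkeeping --- ensuring the constant $C(\omega)$ is assembled only from quantities with all-$p$ integrability so that the H\"older argument closes --- is routine but must be carried out carefully.
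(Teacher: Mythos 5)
Your outline is essentially correct and follows the same route as the actual source of this result: the paper itself does not prove Proposition \ref{prop: discretisation-error} but imports it from \citep[Proposition~3.13]{charrier_finite_2013}, whose argument proceeds exactly as you describe --- pathwise Lax--Milgram well-posedness, an $H^{1+s}$ shift estimate for $0<s<t$ with $s\neq\tfrac{1}{2}$, a first-Strang-lemma splitting into best approximation plus midpoint/trapezoidal quadrature consistency, and finally H\"older's inequality over $\Omega$ using the all-$p$ moments of $1/k_{\min}(\omega)$ and $\|k(\cdot,\omega)\|_{C^t(\bar D)}$. The one step you pass over too quickly is the domain assumption: the cited result is proved for $C^2$ bounded domains, and your appeal to ``elliptic regularity on convex domains'' for H\"older-continuous coefficients is precisely the non-trivial extension here; the paper handles this by invoking the regularity results for convex polygonal domains from \citep{teckentrup_further_2013}, and a complete version of your sketch would need to do the same rather than treat the fractional shift theorem on polygons as standard.
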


Note that Assumption \ref{assmpt: domain} was replaced by $\mathcal{C}^2$ bounded domains $D \subset \mathbb{R}^d$ in \citep{charrier_finite_2013}, but the result extends to polygonal domains using the results on the regularity of $u$ in \citep{teckentrup_further_2013}. Assumptions \ref{assmpt: B} and \ref{assmpt: C} are satisfied for log-normal random fields as described in section \ref{sec: pde-model}. We can extend Proposition \ref{prop: discretisation-error} to the error $\|Q-{Q}_h\|_{L^p(\Omega)}$, where $Q = \mathcal{G}(u(\cdot, \omega))$ and ${Q}_h = \mathcal{G}({u}_h(\cdot, \omega))$, for $u$ and ${u}_h$ as above, and for some bounded functional $\mathcal{G} : H^1(D) \rightarrow \mathbb{R}$. This is stated in \citep[Lemma 3.2]{teckentrup_further_2013}:
\begin{Lemma}
\label{lemma: functional-bound}
    Let $u(\cdot, \omega), {u}_h(\cdot, \omega) \in H^1(D)$ be as above. Suppose $\mathcal{G} : H^1(D) \rightarrow \mathbb{R}$ is a bounded functional satisfying the following assumption:
    \begin{enumerate}[(A)]
    \setcounter{enumi}{4}
        \item  $\mathcal{G}$ is continuously Fr\'echet differentiable, and  there exists $C_F \in L^{q}(\Omega)$, for all $q \in [1,\infty)$, such that: \label{assmpt: functional}
        \begin{equation*}
            |\overline{D_v\mathcal{G}}(u,{u}_h)| \lesssim C_F(\omega) \|v\|_{H^{1}(D)}, \quad \text{for all } v \in H_0^1(D) \quad \text{and} \quad \text{for almost all } \omega \in \Omega,
        \end{equation*}
    where:
    \begin{equation*}
        \overline{D_v \mathcal{G}}(u,{u}_h) \coloneqq \int_0^1 D_v \mathcal{G}(u + \theta({u}_h-u)) \mathrm{d}\theta.
    \end{equation*}
    \end{enumerate}
    Here, the Gateaux derivative of $\mathcal{G}$ at $\tilde{v}$ and in the direction $v$ is defined as:
    \begin{equation*}
        D_v \mathcal{G}(\tilde{v}) \coloneqq \lim_{\varepsilon \rightarrow 0} \frac{\mathcal{G}(\tilde{v} + \varepsilon v) - \mathcal{G}(\tilde{v})}{\varepsilon}, \quad \forall v,\tilde{v} \in H^1(D).
    \end{equation*}
    Then, for all $p \in [1,\infty)$:
    \begin{equation*}
        \|Q - {Q}_h\|_{L^p(\Omega)} \lesssim \|u - {u}_h\|_{L^p(\Omega, H^1(D))}.
    \end{equation*}
\end{Lemma}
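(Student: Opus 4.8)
The plan is to reduce the functional error $Q - Q_h = \mathcal{G}(u) - \mathcal{G}(u_h)$ to the energy-norm error $\|u - u_h\|_{H^1(D)}$ by a fundamental-theorem-of-calculus argument along the line segment joining $u$ and $u_h$, and then to pass to the $L^p(\Omega)$ norm via H\"older's inequality, exploiting the integrability of $C_F$ granted by Assumption \ref{assmpt: functional}.

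First I would fix $\omega$ and study the scalar function $\phi(\theta) \coloneqq \mathcal{G}(u + \theta(u_h - u))$ for $\theta \in [0,1]$, so that $\phi(0) = Q$ and $\phi(1) = Q_h$. Since $\mathcal{G}$ is continuously Fr\'echet differentiable, $\phi$ is continuously differentiable with $\phi'(\theta) = D_{u_h - u}\mathcal{G}(u + \theta(u_h - u))$, the Gateaux derivative in the direction $v \coloneqq u_h - u$. The fundamental theorem of calculus then yields $Q_h - Q = \int_0^1 \phi'(\theta)\,\mathrm{d}\theta = \overline{D_{u_h-u}\mathcal{G}}(u, u_h)$, which is precisely the averaged directional derivative appearing in Assumption \ref{assmpt: functional}.

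Next I would note that $v = u_h - u \in H^1_0(D)$, since under the homogeneous Dirichlet boundary conditions adopted for the analysis both $u$ and $u_h$ lie in $H^1_0(D)$, hence so does their difference. Applying Assumption \ref{assmpt: functional} with this choice of $v$ gives the $\omega$-wise bound $|Q - Q_h| = |\overline{D_{u_h-u}\mathcal{G}}(u,u_h)| \lesssim C_F(\omega)\,\|u - u_h\|_{H^1(D)}$. Taking $p$-th powers, integrating over $\Omega$, and applying H\"older's inequality to separate the two factors, I would bound $\mathbb{E}[|Q-Q_h|^p]$ by $(\mathbb{E}[C_F^{pa}])^{1/a}(\mathbb{E}[\|u-u_h\|^{pb}_{H^1(D)}])^{1/b}$ for a pair of conjugate exponents $a,b$. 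Choosing $b$ close to $1$ makes the second factor comparable to $\|u - u_h\|^p_{L^p(\Omega, H^1(D))}$, with only a slightly enlarged integrability exponent, while the first factor is finite because $C_F \in L^q(\Omega)$ for every finite $q$. As the discretisation bound of Proposition \ref{prop: discretisation-error} holds for \emph{all} $p \in [1,\infty)$, this enlarged exponent on $u - u_h$ is harmless in the final application, and the stated estimate follows.

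The H\"older bookkeeping and the remaining $L^p$ manipulations are routine; the step requiring the most care is the first one, namely justifying that $\theta \mapsto \mathcal{G}(u + \theta(u_h-u))$ is $C^1$ with derivative equal to the directional derivative of $\mathcal{G}$. This is the chain rule for the Fr\'echet-differentiable $\mathcal{G}$ composed with the affine path, and it is exactly the ingredient that converts the functional increment into the averaged Gateaux derivative which Assumption \ref{assmpt: functional} controls.
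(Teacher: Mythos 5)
Your proposal is correct and coincides with the argument behind this lemma: the paper itself gives no proof, quoting the result from Lemma 3.2 of Teckentrup et al.\ (2013), whose proof is exactly your reduction --- the chain rule and fundamental theorem of calculus along the segment from $u$ to $u_h$ to write $Q - Q_h = \overline{D_{u-u_h}\mathcal{G}}(u,u_h)$, the pathwise bound from Assumption (\ref{assmpt: functional}) with $v = u - u_h \in H^1_0(D)$, and H\"older's inequality in $\Omega$ to split off $C_F$. The one caveat, which you flag yourself, is that since $C_F$ is only in $L^q(\Omega)$ for finite $q$ (not $L^\infty$), the H\"older step genuinely yields $\|u - u_h\|_{L^{pb}(\Omega, H^1(D))}$ for some $b>1$ rather than exactly $L^p$; this slight exponent enlargement is the form in which the cited result is actually proved and used, and it is harmless here because Proposition \ref{prop: discretisation-error} controls all moments, so the equal exponents in the lemma's statement should be read with that standard simplification.
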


Throughout the remainder of this paper, we will use the notation $u_h$ to denote the FE solution which uses any quadrature rule based on values of the coefficient $k$ at the grid points. such as the trapezoidal rule, for the linear system assembly, and $Q_h$ the resulting functional.

\subsection{Monte Carlo estimator}
\label{sec: mc-simulations} 

To be able to define an estimator of the quantity of interest $\mathbb{E}[Q]$, let us assume that, for a finite element approximation $Q_h$ of $Q$ with mesh size $h$, the following holds:
\begin{equation*}
    \mathbb{E}[Q_h] \rightarrow \mathbb{E}[Q] \text{ as } h \rightarrow 0,
\end{equation*}
with mean order of convergence $\alpha$, so that:
\begin{equation*}
    \mathbb{E}[Q_h - Q] \leq C_\alpha h^{\alpha}, \quad \alpha > 0.
\end{equation*}

A non-intrusive method for estimating $\mathbb{E}[Q]$ is Monte Carlo, which, for the application at hand, is given by the following equation:
\begin{equation}
\label{eq: MC-estimator}
    \widehat{Q}^{\text{MC}}_{h,N} \coloneqq \frac{1}{N} \sum_{i=1}^N Q_h^{(i)},
\end{equation}
where $\{Q_h^{(i)}\}_{i=1}^N$ are independently sampled from the distribution of $Q_h$. To achieve this, we first require a sample from the Gaussian field $Z$, which then yields a realisation from the coefficient $k$. For each such sample, we compute a finite element approximation $u_h$ of the solution $u$, which then allows us to compute an estimate $Q_h$ of the quantity of interest $Q$. 



Further, we assume that the cost of computing one sample of $Q_h$ satisfies:
\begin{equation}
\label{eq: cost-per-sample}
    \mathcal{C}\left(Q_h^{(i)}\right) \leq C_\gamma h^{-\gamma}, \quad \gamma > 0.
\end{equation}
Typically, the two main contributions to the cost $\mathcal{C}\left(Q_h^{(i)}\right)$ are sampling from the parameter $k(\cdot, \cdot)$, and numerically solving the PDE \eqref{eq: pde-model}. For an appropriate FFT implementation, such as FFTW \citep{frigo_design_2005}, and an optimal iterative linear solver, such as Generalized minimal residual method with incomplete LU factorisation as a preconditioner in FEniCS, a suitable choice is $\gamma \approx d$. In addition, $C_\gamma$ might depend on the parameters of the covariance function of the random field, such as $\sigma^2$, $\lambda$, and $\nu$, but it is independent of $h$. 

Let $\mathcal{C}_\varepsilon(\widehat{Q}^{\text{MC}}_{h,N})$ be the computational $\varepsilon$-cost, quantified by the number of floating point operations (FLOPS) required in order to satisfy $e(\widehat{Q}^{\text{MC}}_{h,N}) < \varepsilon$, where $e(\widehat{Q}^{\text{MC}}_{h,N})$ is the root mean squared error (RMSE) of the MC estimator. Using Eq. \eqref{eq: cost-per-sample} and under the assumption that  $\mathbb{V}[Q_h]$ is approximately constant independent of $h$, the $\varepsilon$-cost $\mathcal{C}_\varepsilon\left(\widehat{Q}^{\text{MC}}_{h,N}\right)$ associated with the estimator \eqref{eq: MC-estimator} is given by:
\begin{equation}
\label{eq: mc-cost}
    \mathcal{C}_\varepsilon\left(\widehat{Q}^{\text{MC}}_{h,N}\right) \lesssim \varepsilon^{-2-\gamma/\alpha}.
\end{equation}

Therefore, Eq. \eqref{eq: mc-cost} emphasises that the $\varepsilon$-cost associated with the MC estimator is directly affected by the parameters $\gamma$ and $\alpha$. In particular, the latter is related to the regularity of the functional under consideration, so that the more irregular the functional is, the smaller $\alpha$ is, and, consequently, the larger the cost, rendering this approach computationally expensive.

\subsection{Multilevel Monte Carlo simulations}
\label{sec: mlmc-simulations}

The fundamental principle at the basis of Multilevel Monte Carlo simulations rests on a multilevel cost reduction technique for the standard MC method \citep{giles_multilevel_2008}. This is achieved by sampling from several approximations $Q_h$ of $Q$, rather than just one. 
To this end, let $\{h_\ell: \ell = 0, \dotsc, L\}$ be a decreasing sequence in $\mathbb{Q}$ defining the MLMC levels, so that $h_0 > \dotsc > h_L \coloneqq h$. Further, suppose for simplicity that there exists an integer $r \in \mathbb{N} \setminus \{1\}$ satisfying:
\begin{equation*}
    h_{\ell-1} = r h_\ell, \quad \forall \, \ell = 1, \dotsc, L.
\end{equation*}
In other words, the MLMC levels are given, in this case, by different grid resolutions used in approximating the solution to the PDE \eqref{eq: pde-model}. For example, we can take $h_\ell = 2^{-\ell}$, so that $r = 2$.

Next, letting $Y_\ell \coloneqq Q_{h_\ell} - Q_{h_{\ell - 1}}$, $\ell = 1, \dotsc, L$, with $Y_0 \coloneqq Q_{h_0}$, the idea is to express the approximation $\mathbb{E}[Q_h]$ of the quantity of interest $\mathbb{E}[Q]$ as:
\begin{equation*}
    \mathbb{E}[Q_h] = \mathbb{E}[Q_{h_0}] + \sum_{\ell = 1}^L \mathbb{E}[Q_{h_\ell} - Q_{h_{\ell - 1}}] = \sum_{\ell = 0}^L \mathbb{E}[Y_\ell],
\end{equation*}
via the linearity of expectation. Thus, we avoid estimating $\mathbb{E}[Q_{h_\ell}]$ on level $\ell$ directly. Rather, we approximate the correction term $\mathbb{E}[Y_\ell]$ with respect to the subsequent lower level. 
Then, the MLMC estimator is defined as \citep{giles_multilevel_2008}:
\begin{equation}
\label{eq: mlmc-estimator}
    \widehat{Q}^{\text{MLMC}}_L = \widehat{Q}^{\text{MC}}_{h_0, N_0} + \sum_{\ell = 1}^L \widehat{Y}^{\text{MC}}_{\ell, N_\ell},
\end{equation}
where $\widehat{Y}^{\text{MC}}_{\ell, N_\ell}$ is the standard MC approximation of $\mathbb{E}[Y_\ell]$, $\ell = 0, \dotsc, L$ with $N_\ell$ samples, namely:
\begin{equation*}
    \widehat{Y}^{\text{MC}}_{\ell, N_\ell} \coloneqq \frac{1}{N_\ell} \sum_{i=1}^{N_\ell} \left( Q_{h_\ell}^{(i)} - Q_{h_{\ell-1}}^{(i)} \right),
\end{equation*}
the number of samples per level satisfying $N_0 > N_1 > \dotsc > N_L$. Note that it is essential to use the same underlying sample $k(\mathbf{x}, \omega^{(i)})$ when computing the difference $Q_{h_\ell}^{(i)} - Q_{h_{\ell-1}}^{(i)}$.


The number of samples $N_\ell$ can be chosen using the formula in \citep{giles_multilevel_2008}:
\begin{equation}
\label{eq: optimal-nl}
    N_\ell = \left\lceil 2 \varepsilon^{-2} \left(\sum_{\ell=0}^L \sqrt{\mathcal{C_\ell} V_\ell}\right) \sqrt{\frac{V_\ell}{\mathcal{C}_\ell}} \right\rceil, \quad \ell = 0, \dotsc, L,
\end{equation}
where $\mathcal{C}_\ell \coloneqq \mathcal{C} \left(Y_\ell^{(i)}\right)$ represents the cost of computing one sample $Y_\ell^{(i)}$ and $V_\ell \coloneqq \mathbb{V}[Y_\ell]$ denotes the variance of one such sample.

Finally, the $\varepsilon$-cost of the MLMC estimator $\mathcal{C}_\varepsilon \left(\widehat{Q}_L^{\text{MLMC}}\right)$ is quantified in \citep[Theorem 1]{cliffe_multilevel_2011}, which states the following:
\begin{Theorem}
\label{thm: the-only-theorem}
Suppose that there exist positive constants $\alpha$, $\beta$, $\gamma$ $ > 0$ such that $\alpha \geq \frac{1}{2} \min (\beta, \gamma)$, and the following hold:
\begin{enumerate}[(i)]
    \item $|\mathbb{E}[Q_{h_\ell} - Q]| \lesssim h_\ell^{\alpha}$,
    \item $\mathbb{V}[Y_\ell] \lesssim h_\ell^{\beta}$,
    \item $\mathcal{C}_\ell \lesssim h_\ell^{-\gamma}$.
\end{enumerate}
Then, for any $\varepsilon < e^{-1}$, there exists a value $L$ (and correspondingly $h \equiv h_L$) and a sequence $\{N_\ell\}_{\ell=0}^L$ such that:
\begin{equation*}
    e \left(\widehat{Q}_L^{\text{MLMC}}\right)^2 = \mathbb{E} \left[\left(\widehat{Q}_L^{\text{MLMC}} - \mathbb{E}[Q]\right)^2\right] < \varepsilon^2,
\end{equation*}
and
\begin{equation*}
    \mathcal{C}_\varepsilon \left(\widehat{Q}_L^{\text{MLMC}}\right) \lesssim 
    \begin{cases} 
    \varepsilon^{-2}, & \text{if } \beta > \gamma, \\
    \varepsilon^{-2} (\log \varepsilon)^2, & \text{if } \beta = \gamma, \\
    \varepsilon^{-2-(\gamma-\beta)/\alpha}, & \text{if } \beta < \gamma.
    \end{cases}
\end{equation*}
\end{Theorem}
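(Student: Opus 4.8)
The plan is to follow the classical optimisation-based argument for MLMC complexity, working from the exact error decomposition in Eq.~\eqref{eq: mlmc-rmse}, which splits the MSE into a sampling-variance term $\sum_{\ell=0}^L V_\ell/N_\ell$ and a squared-bias term $(\mathbb{E}[Q_{h_L}-Q])^2$. I would force each to be at most $\varepsilon^2/2$ separately. First I would fix the number of levels $L$: since assumption (i) gives $|\mathbb{E}[Q_{h_L}-Q]| \lesssim h_L^{\alpha}$ and the meshes are geometric, $h_\ell = h_0 r^{-\ell}$, choosing the smallest $L$ with $h_L^{\alpha}\lesssim\varepsilon$ makes the squared bias at most $\varepsilon^2/2$. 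This forces $h_L \simeq \varepsilon^{1/\alpha}$ and $L \simeq \alpha^{-1}\log_r(1/\varepsilon)$, i.e. $L$ grows like $\log(1/\varepsilon)$; this is where the logarithmic factor in the $\beta=\gamma$ case originates.

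Next I would allocate the per-level sample sizes. Treating the $N_\ell$ as continuous and minimising $\sum_\ell N_\ell\mathcal{C}_\ell$ subject to $\sum_\ell V_\ell/N_\ell = \varepsilon^2/2$ gives the Lagrangian-optimal choice already recorded in Eq.~\eqref{eq: optimal-nl}, namely $N_\ell \propto \sqrt{V_\ell/\mathcal{C}_\ell}$ with proportionality constant $2\varepsilon^{-2}\sum_{\ell}\sqrt{\mathcal{C}_\ell V_\ell}$. Since the $N_\ell$ must be integers, I would round up, $N_\ell = \lceil\,\cdots\,\rceil$; this preserves the variance constraint while giving $N_\ell \le (\text{optimal value})+1$. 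Substituting into the cost yields
\begin{equation*}
\mathcal{C}_\varepsilon\left(\widehat{Q}_L^{\text{MLMC}}\right) \le 2\varepsilon^{-2}\left(\sum_{\ell=0}^L \sqrt{\mathcal{C}_\ell V_\ell}\right)^2 + \sum_{\ell=0}^L \mathcal{C}_\ell,
\end{equation*}
so the task reduces to estimating the two sums under (ii) and (iii). For the principal term I would insert $V_\ell\lesssim h_\ell^{\beta}$ and $\mathcal{C}_\ell\lesssim h_\ell^{-\gamma}$ to obtain $\sqrt{\mathcal{C}_\ell V_\ell}\lesssim h_\ell^{(\beta-\gamma)/2} = h_0^{(\beta-\gamma)/2}r^{-\ell(\beta-\gamma)/2}$, a geometric series. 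When $\beta>\gamma$ the ratio is below one, the sum is $O(1)$, and the principal term is $\lesssim\varepsilon^{-2}$; when $\beta=\gamma$ each term is $O(1)$, the sum is $O(L)$, and squaring gives $O(L^2)=O((\log\varepsilon)^2)$; when $\beta<\gamma$ the series is dominated by its last term $h_L^{(\beta-\gamma)/2}$, so the squared sum is $\lesssim h_L^{\beta-\gamma}\simeq\varepsilon^{(\beta-\gamma)/\alpha}$, producing $\varepsilon^{-2-(\gamma-\beta)/\alpha}$. This reproduces the three stated regimes.

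The step I expect to be the main obstacle is controlling the residual term $\sum_{\ell=0}^L\mathcal{C}_\ell$ arising from integer rounding, and showing it is always dominated by the principal term; this is precisely where the hypothesis $\alpha\ge\tfrac12\min(\beta,\gamma)$ enters. Using (iii) and the geometric meshes, $\sum_\ell \mathcal{C}_\ell \lesssim h_L^{-\gamma}\simeq\varepsilon^{-\gamma/\alpha}$. For $\beta\ge\gamma$ the principal term is at least of order $\varepsilon^{-2}$, and $\alpha\ge\tfrac12\gamma$ gives $\gamma/\alpha\le 2$, so the residual is absorbed; for $\beta<\gamma$ the principal exponent is $2+(\gamma-\beta)/\alpha$, and requiring it to be at least $\gamma/\alpha$ reduces exactly to $\alpha\ge\tfrac12\beta$. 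Hence in every case the constraint $\alpha\ge\tfrac12\min(\beta,\gamma)$ guarantees the rounding contribution never worsens the stated rate, and the claimed three-case complexity bound follows, with the accompanying MSE bound holding by construction of $L$ and $\{N_\ell\}$.
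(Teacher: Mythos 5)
Your proposal is correct: the paper itself does not prove this theorem but imports it verbatim as Theorem~1 of the cited reference (Cliffe et al., 2011), and your argument is precisely the standard proof given there --- bias/variance splitting of Eq.~\eqref{eq: mlmc-rmse}, $h_L \simeq \varepsilon^{1/\alpha}$ so $L \simeq \log(1/\varepsilon)$, Lagrangian-optimal $N_\ell$ rounded up as in Eq.~\eqref{eq: optimal-nl}, geometric-sum estimates of $\sum_\ell \sqrt{\mathcal{C}_\ell V_\ell}$ in the three regimes, and absorption of the rounding cost $\sum_\ell \mathcal{C}_\ell \lesssim \varepsilon^{-\gamma/\alpha}$ via $\alpha \geq \tfrac{1}{2}\min(\beta,\gamma)$. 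In particular, your identification of exactly where the hypothesis $\alpha \geq \tfrac{1}{2}\min(\beta,\gamma)$ and the condition $\varepsilon < e^{-1}$ (guaranteeing $\log(1/\varepsilon) > 1$) enter is faithful to the cited proof, and all three case verifications check out.
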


A possible implementation of the MLMC estimator is proposed by Giles et al. in \citep{giles_multilevel_2008} and Cliffe et al. in \citep{cliffe_multilevel_2011}, where they adopt an adaptive approach to computing the number of levels.

\subsection{MC and MLMC with Circulant Embedding}
\label{sec: MC-MLMC-CE}

Using the observations in Section \ref{sec: ces}, we can write the MC and MLMC estimators $\widehat{Q}_h(\mathbf{Z})$, where we use the $\widehat{Q}_h(\mathbf{Z})$ notation to highlight the dependency on the Gaussian field $\mathbf{Z}$, as follows:
\begin{equation*}
    \widehat{Q}_{h,N}^{\text{MC}}(\mathbf{Z}) = \frac{1}{N}\sum_{i=1}^N Q_h((G \Lambda^{\frac{1}{2}}\boldsymbol{\xi}^{(i)})_R), 
\end{equation*}
\begin{equation*}
    \widehat{Q}^{\text{MLMC}}_L = \frac{1}{N_0} \sum_{i=1}^{N_0} Q_{h_0}((G \Lambda^{\frac{1}{2}}\boldsymbol{\xi}^{(i,0)})_R) + \sum_{\ell=1}^L \left(\frac{1}{N_\ell} \sum_{i=1}^{N_\ell}\left(Q_{h_\ell}((G \Lambda^{\frac{1}{2}} \boldsymbol{\xi}^{(i,\ell)})_R) - Q_{h_{\ell-1}}((G \Lambda^{\frac{1}{2}} \boldsymbol{\xi}^{(i,\ell)})_R)\right) \right),
\end{equation*}
where $\{\boldsymbol{\xi}^{(i,\ell)}\}$ is a collection of IID standard Gaussian random vectors. Here, we use the same discretisation mesh $\mathcal{T}$ of the domain $D$ with mesh size $h \equiv \frac{1}{m_i}$, $i = 1, \dotsc, d$, for sampling using the circulant embedding scheme and for computing the finite element approximation $Q_h$.

Thus, the following algorithm is adapted from \citep[Section 5.3]{graham_quasi-monte_2011} and it outlines the steps required for estimating $\widehat{Y}^{\text{MC}}_{\ell, N_\ell}$ in the MLMC method on level $\ell$, where samples of $k(\mathbf{x}, \cdot)$ are obtained using circulant embedding:
\begin{Algorithm}
\label{alg: MLMC-and-CE}
Computes an MC estimate of $\mathbb{E}[Y_\ell]$ on a level $\ell$ using the circulant embedding method to sample from $k(\cdot, \cdot)$ given the number of samples $N_\ell$.
\begin{enumerate}
    \item Construct covariance matrix $R$ from discretisation mesh $\mathcal{T}$ and covariance function $C$.
    \item Embed $R$ into a $4m_1 m_2 \times 4m_1 m_2$ block circulant with circulant blocks matrix $S$.
    \item Compute ${\Lambda_j} = (\sqrt{4m_1 m_2} F \mathbf{s})_{{j}}, {j = 1, \dotsc, 4m_1 m_2}$, where $\mathbf{s}$ is the first {column} of $S$, and $F$ is the 2D Fourier matrix.
    \begin{enumerate}
        \item Check whether the eigenvalues real and positive. If yes, go to (4). If not, go to (3b).
        \item Pad the embedding matrix $S$ to obtain a symmetric positive definite matrix using the smooth periodisation technique and the bound \eqref{eq: padding-bound}, and compute updated $\Lambda_j$.
    \end{enumerate}
    \item For each MC iteration on level $\ell$:
        \begin{enumerate}
            \item Generate $\boldsymbol{\xi} \sim \mathcal{N}(0, I)$.
            \item Evaluate $\mathbf{w} \coloneqq \boldsymbol{\Lambda} \odot \boldsymbol{\xi}$, {where $\boldsymbol{\Lambda}$ is the vector whose entries are the eigenvalues $\Lambda_j$}.
            \item Compute $\mathbf{v} \coloneqq F \mathbf{w}$.
            \item Take $\mathbf{u} \coloneqq \Re(\mathbf{v}) + \Im(\mathbf{v})$.
            \item Set $\mathbf{Z}_\ell \coloneqq (\mathbf{u})_{R_\ell}$ and $\mathbf{Z}_{\ell-1} \coloneqq (\mathbf{u})_{R_{\ell-1}}$.
            \item Let $\mathbf{k}_\ell \coloneqq \exp(\mathbf{Z}_\ell)$ and $\mathbf{k}_{\ell-1} \coloneqq \exp(\mathbf{Z}_{\ell-1})$.
            \item Solve Eq. \eqref{eq: pde-model} in turn for $\mathbf{k}_\ell$ and $\mathbf{k}_{\ell-1}$.
        \end{enumerate}
\end{enumerate}
\end{Algorithm}
\noindent Here, $\odot$ denotes the Hadamard product, so that, for $\mathbf{a}, \mathbf{b} \in \mathbb{R}^d$, $\{\mathbf{a} \odot \mathbf{b}\}_j = \mathbf{a}_j \mathbf{b}_j$, $j = 1, \dotsc, d$.

It is worth mentioning that the 2D discrete Fourier transform in Step 3 and Step 4(c) can be computed using an FFT implementation, such as FFTW \citep{frigo_design_2005}, significantly reducing the computational complexity. In addition, $S$ is a circulant matrix and, therefore, uniquely defined by its first row or column. As a consequence, Step 1 and Step 2 can be altered to only construct the first row or column of $S$, rather than the full matrix, diminishing the memory requirements.

\subsection{MC and MLMC with Smoothed Circulant Embedding}

One argument which supports the computational efficiency of the MLMC estimator \eqref{eq: mlmc-estimator} over the standard MC approximation \eqref{eq: MC-estimator} is that the coarsest level is invariant for any imposed accuracy $\varepsilon$, and, consequently, the cost $\mathcal{C}_0$ does not increase as $\varepsilon \rightarrow 0$. Generally, a minimum value $h_0$ is necessary to provide a basic level of detail on the problem. This is selected depending on the regularity of the solution to the PDE \eqref{eq: pde-model}. While this can render the MLMC estimator more costly for large accuracies $\varepsilon$, it still leads to significant reductions in cost as $\varepsilon \rightarrow 0$.

In some applications, the choice of coarsest mesh is limited by the length scale $\lambda$ and the smoothness parameter $\nu$ of the random field $k(\cdot, \cdot)$. This is illustrated in Figure \ref{fig: smoothing-motivation-matern} for the functional $Q = u\left(\frac{7}{15}, \frac{7}{15}\right)$ of the solution to the PDE \eqref{eq: pde-model}, where we use the Mat\'ern covariance for the random parameter $k(\cdot, \cdot)$ with $\sigma^2=1$, $\nu=1.5$ and $\lambda=0.03$. In particular, the plot highlights that on the first levels, which correspond to coarse meshes, the variance $\mathbb{V}[Y_\ell]$ of the difference $Q_{h_\ell} - Q_{h_{\ell-1}}$ is larger than then the variance $\mathbb{V}[Q_{h_\ell}]$ of the quantity of interest. Hence, for these levels, the contributions to the cost $\mathcal{C}_\varepsilon\left(\widehat{Q}_L^{\text{MLMC}}\right)$ will actually be more significant than those using standard MC, rendering the MLMC approach more expensive.

\begin{figure}
    \centering
    \includegraphics[width=0.6\textwidth]{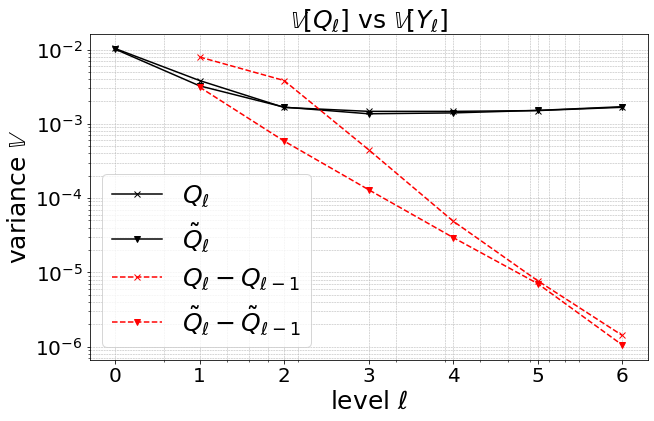}
    \caption{Plot of variance decay for the quantity of interest $Q = {u(\mathbf{x}^*)}$ using the Mat\'ern covariance function for the random coefficient with $\sigma^2=1$, $\nu=1.5$ and $\lambda=0.03$.}
    \label{fig: smoothing-motivation-matern}
\end{figure}

For this reason, Cliffe et al. \citep{cliffe_multilevel_2011} make the following observation at the end of Section 4.1:
\begin{equation}
\label{eq: mlmc-h0-bound-exp-covariance}
    h_0 \leq \lambda,
\end{equation}
for the covariance function in Eq. \eqref{eq: p-norm-cov-function}. For the Mat\'ern case, we use the heuristically derived relationship between the correlation length $\rho$, smoothness parameter $\nu$ and length scale $\lambda$ \citep{lindgren_explicit_2011}:
\begin{equation*}
    \rho = \sqrt{8\nu} \lambda,
\end{equation*}
so that a suitable estimate for the coarsest mesh in the MLMC estimator is:
\begin{equation}
\label{eq: mlmc-h0-bound-matern-covariance}
    h_0 \leq \sqrt{8 \nu} \lambda.
\end{equation}
This corresponds to the point where the two lines of $\mathbb{V}[Y_\ell]$ and $\mathbb{V}[Q_{h_\ell}]$ intersect in Figure \ref{fig: smoothing-motivation-matern}, and limits severely the computational gains prompted by MLMC for small $\lambda$ and $\nu$, as it restricts the number of (computationally cheap) levels we can include in the MLMC estimator. {(Recall that MLMC crucially uses that $\mathbb{V}[Y_\ell]$ is small, so that $\mathbb{E}[Y_\ell]$ can be estimated accurately using a small number of samples.)}

One solution to this problem was proposed in \citep{teckentrup_further_2013} where the KL-expansion is used to sample from the random coefficient. To mitigate the aforementioned issue, Teckentrup et al. suggest a \textit{level-dependent} truncation of the KL-expansion, where the most oscillatory terms in the sum are neglected to obtain a more accurate approximation of the random field on coarser levels. This then allows to choose $h_0$ independent of $\lambda$, leading to noteworthy reductions in the cost. Similar ideas on level-dependent truncation have been used in \citep{gittelson_multi-level_2013, schwab_multilevel_2023}.

The goal is to replicate this idea in the context of circulant embedding methods. It is worth highlighting that the truncated KL-expansion is an approximation of the random field, and so the accuracy of the estimate is directly affected by where the sum is truncated. On the other hand, circulant embedding methods are exact on the discretisation mesh, and so we can no longer distinguish different levels of approximation of the random field. Rather, we limit the extent of fluctuations in $k(\mathbf{x}, \omega)$ on the mesh considered on each level of the MLMC estimator by discarding highly oscillatory terms corresponding to the smallest eigenvalues, as outlined in Section \ref{sec: smoothing}. In doing so, consecutive differences in the definition of $\widehat Q^{\text{MLMC}}_{L}$ have smaller variance. This leads to a choice of $h_0$ independent of the parameters $\lambda$ and $\nu$.

{It is worth highlighting that the bounds in Eq. \eqref{eq: mlmc-h0-bound-exp-covariance} and \eqref{eq: mlmc-h0-bound-matern-covariance} indicate that the smaller $\lambda$ is, the finer the coarsest mesh in standard MLMC is. In addition, in order to achieve small accuracies, we use more levels in the MLMC estimator with smaller mesh sizes which eventually dominate the computational cost, regardless of the coarsest mesh. Hence, the importance of the smoothing technique is mostly evident for small correlation lengths $\lambda$ and large errors $\varepsilon$.}

To justify using the CES approach in the MLMC estimator for the quantity of interest $\mathbb{E}[Q]$ from the PDE model \eqref{eq: pde-model}, the following theorem provides an estimate of the error introduced by smoothing the random field samples on a given level $\ell$:
\begin{Theorem}
\label{thm: error-functional-sample}
    Let  $Z_\mathcal{T}$ and $\tilde{Z}_\mathcal{T}$ be piece-wise linear interpolants of the non-smoothed and smoothed discrete representations $\mathbf{Z}$ and $\tilde{\mathbf{Z}}$, respectively, on a given grid $\mathcal{T}$ of the domain $D$. Let $\tilde{k} = \exp(\tilde{Z}_\mathcal{T})$ and denote by $\tilde{u}_h$ the corresponding finite element solution. Let $\mathcal{G} : H^1(D) \rightarrow \mathbb{R}$ be a bounded functional which satisfies assumption \eqref{assmpt: functional} in Lemma \ref{lemma: functional-bound} with $u_h$ and $\tilde u_h$ instead of $u$ and $u_h$. {If $\tau$ is such that $\mathbb{E}[\|\mathbf{Z}(\omega) - \tilde{\mathbf{Z}}(\omega)\|^p_\infty] \rightarrow 0$ as $h \rightarrow 0$}, then:
    \begin{equation*}
        \mathbb{E}\left[|Q _h - \Tilde{Q}_h|^p\right] \lesssim \mathbb{E}\left[\|Z_\mathcal{T} - \Tilde{Z}_\mathcal{T}\|^{pq}_{C^0(\bar{D})}\right]^{\frac{1}{q}},
    \end{equation*}
    for any $p,q \in [1, \infty)$.
\end{Theorem}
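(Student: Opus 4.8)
The plan is to pass from the functional error to the error in the finite element solutions, then to a pathwise perturbation estimate in terms of the coefficient fields, and finally to take expectations with a Hölder split that isolates a random prefactor. First I would invoke Lemma \ref{lemma: functional-bound}, applied with $(u_h, \tilde u_h)$ in place of $(u, u_h)$ as the hypothesis permits, to obtain
\begin{equation*}
    \|Q_h - \tilde Q_h\|_{L^p(\Omega)} \lesssim \|u_h - \tilde u_h\|_{L^p(\Omega, H^1(D))}.
\end{equation*}
This reduces the claim to estimating the $H^1$-difference of the two finite element solutions in terms of $\|Z_\mathcal{T} - \tilde Z_\mathcal{T}\|_{C^0(\bar D)}$.

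For the pathwise estimate, I would fix $\omega$ and subtract the two discrete variational identities
\begin{equation*}
    \int_D k \, \nabla u_h \cdot \nabla v \, \mathrm{d}\mathbf{x} = \int_D f v \, \mathrm{d}\mathbf{x} = \int_D \tilde k \, \nabla \tilde u_h \cdot \nabla v \, \mathrm{d}\mathbf{x}, \quad v \in \mathcal{V}_h,
\end{equation*}
to get $\int_D k \, \nabla(u_h - \tilde u_h) \cdot \nabla v = \int_D (\tilde k - k) \, \nabla \tilde u_h \cdot \nabla v$. Testing with $v = u_h - \tilde u_h$, using the coercivity bound $k \geq k_{\min}$ and the Cauchy--Schwarz inequality yields
\begin{equation*}
    \|u_h - \tilde u_h\|_{H^1(D)} \lesssim \frac{1}{k_{\min}} \, \|k - \tilde k\|_{L^\infty(D)} \, \|\tilde u_h\|_{H^1(D)}.
\end{equation*}
A standard a priori estimate (Lax--Milgram with coercivity constant $\tilde k_{\min}$) bounds $\|\tilde u_h\|_{H^1(D)} \lesssim \tilde k_{\min}^{-1}\|f\|_{H^{-1}(D)}$, and the Lipschitz property of the exponential on bounded sets gives $\|k - \tilde k\|_{L^\infty(D)} \leq \exp\!\big(\max\{\|Z_\mathcal{T}\|_{C^0(\bar D)}, \|\tilde Z_\mathcal{T}\|_{C^0(\bar D)}\}\big)\,\|Z_\mathcal{T} - \tilde Z_\mathcal{T}\|_{C^0(\bar D)}$.

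Combining these pathwise bounds produces a random prefactor $C(\omega) \coloneqq k_{\min}^{-1}\,\tilde k_{\min}^{-1}\,\exp\!\big(\max\{\|Z_\mathcal{T}\|_{C^0(\bar D)}, \|\tilde Z_\mathcal{T}\|_{C^0(\bar D)}\}\big)$ multiplying $\|Z_\mathcal{T} - \tilde Z_\mathcal{T}\|_{C^0(\bar D)}$. Raising to the $p$th power, taking expectations, and applying Hölder's inequality with conjugate exponents $q$ and $q/(q-1)$ to decouple the prefactor from the field error gives
\begin{equation*}
    \mathbb{E}\big[|Q_h - \tilde Q_h|^p\big] \lesssim \mathbb{E}\big[C(\omega)^{\,pq/(q-1)}\big]^{(q-1)/q}\,\mathbb{E}\big[\|Z_\mathcal{T} - \tilde Z_\mathcal{T}\|_{C^0(\bar D)}^{\,pq}\big]^{1/q},
\end{equation*}
which is precisely the claimed form once the first factor is shown to be finite. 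The hard part will be confirming this integrability of $C(\omega)$: the factors $k_{\min}^{-1}$ and $\tilde k_{\min}^{-1}$ lie in every $L^r(\Omega)$ by Assumption \ref{assmpt: B}, while the exponential of the supremum of a Gaussian field has finite moments of all orders by Fernique's theorem. The subtlety I would address carefully is that $\tilde Z_\mathcal{T}$, obtained from the same $\boldsymbol{\xi}$ by zeroing out the smallest eigenvalues, is itself a Gaussian field whose pointwise variance is dominated by that of $Z_\mathcal{T}$; hence its supremum inherits the same exponential integrability and $\tilde k_{\min}^{-1}$ the same moment bounds, uniformly in the truncation index $k$, so that the prefactor is controlled independently of the smoothing level.
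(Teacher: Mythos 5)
Your proof is correct and follows the same chain of reductions as the paper -- functional difference to $H^1$-difference of the FE solutions via Lemma \ref{lemma: functional-bound}, then to the coefficient difference, then a H\"older split with Fernique's theorem controlling the random prefactor -- but the middle step is handled by a genuinely different (more self-contained) route. Where you derive the pathwise perturbation bound from scratch, subtracting the discrete variational identities, testing with $u_h - \tilde u_h$, and invoking coercivity via $k_{\min}$ together with Lax--Milgram for $\|\tilde u_h\|_{H^1(D)}$, the paper simply cites Lemma 4.8 of Dodwell et al., which packages exactly this estimate (random constants and their moments included) into the expectation-level bound $\mathbb{E}[\|u_h - \tilde u_h\|^p_{H^1(D)}] \lesssim \mathbb{E}[\|k - \tilde k\|^p_{C^0(\bar D)}]$; likewise your Lipschitz bound $\|k - \tilde k\|_{L^\infty} \le \exp(\max\{\|Z_\mathcal{T}\|_{C^0}, \|\tilde Z_\mathcal{T}\|_{C^0}\})\,\|Z_\mathcal{T} - \tilde Z_\mathcal{T}\|_{C^0}$ is the same device as the paper's inequality $|e^x - e^y| \le (e^x + e^y)|x-y|$. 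What your route buys is transparency: carrying the prefactor $C(\omega)$ explicitly forces you to verify -- which the paper leaves implicit behind the citation and a bare appeal to Fernique -- that $\tilde k_{\min}^{-1}$ and $\exp(\|\tilde Z_\mathcal{T}\|_{C^0(\bar D)})$ have all moments uniformly in the truncation index $k$. Your instinct there is right but the stated justification is slightly too weak: pointwise variance domination alone does not control the supremum of a Gaussian vector; you should note instead that zeroing eigenvalues gives $\tilde\lambda_j \le \lambda_j$, hence also increment-variance domination $\mathbb{E}[(\tilde Z_i - \tilde Z_j)^2] \le \mathbb{E}[(Z_i - Z_j)^2]$, so Sudakov--Fernique plus Borell--TIS concentration yields exponential moments of $\|\tilde{\mathbf{Z}}\|_\infty$ dominated by those of $\|\mathbf{Z}\|_\infty$, uniformly in the truncation. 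Two minor alignments with the paper: $u_h$ and $\tilde u_h$ are assembled with a quadrature rule using grid values of the coefficient, so your subtracted identities should be written with the discrete bilinear forms (the energy argument is unchanged, since the weights are positive and only grid values of $k$, $\tilde k$ enter); and your H\"older split requires $q > 1$, with conjugate exponent $r = q/(q-1) < \infty$ -- the same implicit restriction present in the paper's own proof, where the companion factor is only bounded for $r \in [1, \infty)$.
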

\begin{proof}
    From Lemma \ref{lemma: functional-bound}, we have:
    \begin{equation*}
        \mathbb{E}\left[|Q_h(\cdot, \omega) - \Tilde{Q}_h(\cdot, \omega))|^p\right] \lesssim \mathbb{E}\left[\|u_h(\cdot, \omega) - \Tilde{u}_h(\cdot, \omega)\|^p_{H^1(D)}\right].
    \end{equation*}
    Then, from \citep[Lemma 4.8]{dodwell_hierarchical_2015}, and since the finite element solution computed with a nodal quadrature rule only depends on the values of the coefficient on the grid $\mathcal T$, we know that:
    \begin{equation*}
        \mathbb{E}\left[\|u_h(\cdot, \omega) - \Tilde{u}_h(\cdot, \omega)\|^p_{H^1(D)}\right] \lesssim \mathbb{E} \left[{\frac{1}{\kappa(\omega)^p \tilde{\kappa}(\omega)^p}} \| k(\cdot, \omega) - \Tilde{k}(\cdot, \omega)\|^p_{C^0(\Bar{D})}\right],
    \end{equation*}
    where $k= \exp(Z_\mathcal{T})$ {and $\kappa(\omega) = \min_{x \in \bar{D}} k(x, \omega)$. Applying H\"older's inequality, the right-hand side becomes:
    \begin{equation}
    \label{eq: theorem-3-proof-3}
        \mathbb{E} \left[\frac{1}{\kappa(\omega)^p \tilde{\kappa}(\omega)^p}\| k(\cdot, \omega) - \Tilde{k}(\cdot, \omega)\|^p_{C^0(\Bar{D})}\right] \leq \mathbb{E}\left[\frac{1}{\kappa(\omega)^{pq_1} \tilde{\kappa}(\omega)^{pq_1}}\right]^\frac{1}{q_1} \mathbb{E}[\| k(\cdot, \omega) - \Tilde{k}(\cdot, \omega)\|^{pr_1}_{C^0(\Bar{D})}]^{\frac{1}{r_1}},
    \end{equation}
    for some $q_1, r_1 \in [1, \infty)$ such that $\frac{1}{q_1} + \frac{1}{r_1} = 1$.}

    {We know from \citep[Prop. 3.10]{charrier_strong_2012} that $\mathbb{E}[\bar{\kappa}(\omega)^{-p}]$ is bounded independent of $h$, where $\bar{\kappa} = \min_{x \in \bar{D}} \bar{k}(x, \omega)$ with $\bar{k}$ denoting the continuous exponential field. Then, since $k = \exp(Z_\mathcal{T})$ is the linear interpolant of $\bar{k}$, it follows that $\mathbb{E}[\kappa(\omega)^{-p}] \geq \mathbb{E}[\bar{\kappa}(\omega)^{-p}]$, so that $\mathbb{E}[\kappa(\omega)^{-p}]$ is bounded independent of $h$ . To show that this is the case for $\mathbb{E}[\tilde{\kappa}(\omega)^{-p}]$, note that:
     \begin{align*}
        \mathbb{E}[\|\tilde{Z}_\mathcal{T}(\cdot, \omega)\|^p_{C^0(\bar{D})}] &\leq \mathbb{E}[(\|Z_\mathcal{T}(\cdot, \omega)\|_{C^0(\bar{D})} + \|Z_\mathcal{T}(\cdot, \omega) - \tilde{Z}_\mathcal{T}(\cdot, \omega)\|_{C^0(\bar{D})})^p] \\
        &\leq 2^{p-1} \mathbb{E}[\|Z_\mathcal{T}(\cdot, \omega)\|^p_{C^0(\bar{D})}] + 2^{p-1} \mathbb{E}[\|Z_\mathcal{T}(\cdot, \omega) - \tilde{Z}_\mathcal{T}(\cdot, \omega)\|^p_{C^0(\bar{D})}] \\
        &= 2^{p-1} \mathbb{E}[\|Z_\mathcal{T}(\cdot, \omega)\|^p_{C^0(\bar{D})}] + 2^{p-1} \mathbb{E}[\|\mathbf{Z}(\omega) - \tilde{\mathbf{Z}}(\omega)\|^p_\infty],
    \end{align*}
    where we used the fact that:
    \begin{equation}
    \label{eq: theorem-3-proof-2}
        |a+b|^t \leq 2^{t-1} (|a|^t + |b|^t), \quad \forall a, b \in \mathbb{R} \text{ and } t \in [1, \infty),
    \end{equation}
    and that $Z_\mathcal{T}$ and $\tilde{Z}_\mathcal{T}$ are piece-wise linear interpolants of $\mathbf{Z}$ and $\tilde{\mathbf{Z}}$.}
    
    {Now, we know from \cite[Proposition 3.8]{charrier_strong_2012} that: 
    $$\mathbb{E}[\|\Bar{Z}(\cdot, \omega)\|^p_{C^0(\bar{D})}] < C_1,$$ 
    where $\Bar{Z}(\mathbf{x}, \omega)$ is the continuous Gaussian field such that $\Bar{k}(\mathbf{x}, \omega) = \exp(\Bar{Z}(\mathbf{x}, \omega))$. Since $Z_\mathcal{T}$ is the linear interpolant of $\Bar{Z}$, it follows that:
    $$\mathbb{E}[\|Z_\mathcal{T}(\cdot, \omega)\|^p_{C^0(\bar{D})}] < \mathbb{E}[\|\Bar{Z}(\cdot, \omega)\|^p_{C^0(\bar{D})}] < C_1,$$ 
    independently of $h$. From the assumption on $\tau$ we have that $\mathbb{E}[\|\mathbf{Z}(\omega) - \tilde{\mathbf{Z}}(\omega)\|^p_\infty] \rightarrow 0$ as $h \rightarrow 0$. Hence, we have that:
    $$\mathbb{E}[\|\tilde{Z}_\mathcal{T}(\cdot, \omega)\|^p_{C^0(\bar{D})}] < C_3$$ 
    independent of $h$, so that:
    $$\mathbb{E}[\exp(\|\Tilde{Z}_\mathcal{T}(\cdot, \omega)\|^p_{C^0(\bar{D})})] < C_4$$ independent of $h$. Further:
    \begin{align*}
        \mathbb{E}\left[\frac{1}{\tilde{\kappa}(\omega)^p}\right] &=  \mathbb{E}\left[\frac{1}{(\min_{x \in \bar{D}}\tilde{k}(x, \omega))^p}\right] = \mathbb{E}\left[\frac{1}{(\min_{x \in \bar{D}}\exp(\Tilde{Z}_\mathcal{T}(x, \omega)))^p}\right] \\
        &= \mathbb{E}\left[\frac{1}{\exp(\min_{x \in \bar{D}}\Tilde{Z}_\mathcal{T}(x, \omega))^p}\right] \leq \mathbb{E}[\exp \|\tilde{Z}(x, \omega)\|^p_{C^0(\bar{D})}] < C_4
    \end{align*}
    independent of $h$ from above. Hence, $\mathbb{E}\left[\frac{1}{\kappa(\omega)^{pq} \tilde{\kappa}(\omega)^{pq}}\right]^\frac{1}{q}$ is bounded independent of $h$.
    }
    
    {For the second term in Eq. \eqref{eq: theorem-3-proof-3}}, using the inequality $|e^x - e^y| \leq (e^x + e^y) |x-y|, \forall x, y \in \mathbb{R}$ and H\"older's inequality, we can derive:
    \begin{multline*}
        \mathbb{E}\left[\|\exp(Z_\mathcal{T}(\cdot, \omega)) - \exp(\Tilde{Z}_\mathcal{T}(\cdot, \omega))\|^{{pr_1}}_{C^0(\Bar{D})}\right]^{{\frac{1}{r_1}}} \leq \\ \mathbb{E} \left[\|\exp(Z_\mathcal{T}(\cdot, \omega)) + \exp(\Tilde{Z}_\mathcal{T}(\cdot, \omega))\|^{{p r_1 q_2}}_{C^0(\Bar{D})} \right]^{{\frac{1}{r_1 q_2}}} \mathbb{E}\left[\|Z_\mathcal{T}(\cdot, \omega) - \Tilde{Z}_\mathcal{T}(\cdot, \omega) \|^{{p r_1 r_2}}_{C^0(\Bar{D})}\right]^{{\frac{1}{r_1 r_2}}},
    \end{multline*}
    where {$q_2, r_2 \in [1, \infty]$} are H\"older exponents. 

    {Further, we wish to prove that the first term in the inequality on the right-hand side is bounded independent of the discretisation parameter $h$. To this end, using the triangle inequality and Eq. \eqref{eq: theorem-3-proof-2}, we obtain:
    \begin{align}
    \label{eq: theorem-3-proof-1}
        &\mathbb{E} \left[\|\exp(Z_\mathcal{T}(\cdot, \omega)) + \exp(\Tilde{Z}_\mathcal{T}(\cdot, \omega))\|^{{p r_1 q_2}}_{C^0(\Bar{D})} \right]^{{\frac{1}{r_1 q_2}}} \nonumber \\
        &\qquad \leq 2^{\frac{p r_1 q_2 - 1}{r1 q_2}} \left(\mathbb{E}\left[\|\exp(Z_\mathcal{T}(\cdot, \omega))\|^{{p r_1 q_2}}_{C^0(\Bar{D})}\right] + \mathbb{E}\left[\|\exp(\Tilde{Z}_\mathcal{T}(\cdot, \omega))\|^{{p r_1 q_2}}_{C^0(\Bar{D})}\right] \right)^{\frac{1}{r_1 q_2}} \nonumber \\ 
        &\qquad \leq 2^{\frac{p r_1 q_2 - 1}{r1 q_2}} \left(\mathbb{E}\left[\|\exp(Z_\mathcal{T}(\cdot, \omega))\|^{{p r_1 q_2}}_{C^0(\Bar{D})}\right]^\frac{1}{r_1 q_2} + \mathbb{E}\left[\|\exp(\Tilde{Z}_\mathcal{T}(\cdot, \omega))\|^{{p r_1 q_2}}_{C^0(\Bar{D})}\right]^{\frac{1}{r_1 q_2}} \right) \nonumber \\ 
        &\qquad = 2^{\frac{p r_1 q_2 - 1}{r1 q_2}} \left(\mathbb{E}\left[\exp(\|Z_\mathcal{T}(\cdot, \omega)\|)^{{p r_1 q_2}}_{C^0(\Bar{D})}\right]^\frac{1}{r_1 q_2} + \mathbb{E}\left[\exp(\|\Tilde{Z}_\mathcal{T}(\cdot, \omega)\|)^{{p r_1 q_2}}_{C^0(\Bar{D})}\right]^{\frac{1}{r_1 q_2}} \right)
    \end{align}
    In the last inequality, we used the fact that $(|a| + |b|)^p \leq |a|^p + |b|^p$ for $a, b \in \mathbb{R}$ and $0 \leq p \leq 1$. For the first term in Eq. \eqref{eq: theorem-3-proof-1}, we know from above that $\mathbb{E}[\|Z_\mathcal{T}(\cdot, \omega)\|^p_{C^0(\Bar{D})}] < C_1$ independent of $h$, so that $\mathbb{E}[\exp(\|Z_\mathcal{T}(\cdot, \omega)\|)^p_{C^0(\Bar{D})}] < C_2$ independent of $h$.}

    {Putting everything together, we obtain that $\mathbb{E} \left[\|\exp(Z_\mathcal{T}(\cdot, \omega)) + \exp(\Tilde{Z}_\mathcal{T}(\cdot, \omega))\|^{{p r_1 q_2}}_{C^0(\Bar{D})} \right]^{{\frac{1}{r_1 q_2}}}$ is bounded independent of h, which concludes the proof.}
    
\end{proof}

To obtain a bound for the error between $Q_h$ and $\tilde{Q}_h$ in terms of the number of dropped eigenvalues $k$, we combine Theorems \ref{thm: sample-error} and \ref{thm: error-functional-sample} to infer the following Corollary:
\begin{Corollary}
\label{cor: error-functional-egnv}
Let the assumptions of Theorems \ref{thm: sample-error} and \ref{thm: error-functional-sample} hold. Then, for any $p \in [1, \infty)$:
    \begin{equation*}
        \mathbb{E}\left[|Q_h - \tilde{Q}_h|^p\right] \lesssim s^{-\frac{p}{2}} \left(\Lambda_{{s-{\tau}+1}}^{{\mathrm{ord}}}\right)^\frac{p}{2} \, {\tau}^p,
    \end{equation*}
{where $s$ if the size of the embedding matrix after padding, so that $s= 2^d \prod_{i=1}^d (m_i + J_i)$, for discretisation parameters $m_i, i=1, \dotsc, d$ and padding values $J_i, i=1, \dotsc, d$.}
\end{Corollary}
\begin{proof}
 This follows directly from Theorems \ref{thm: sample-error} and \ref{thm: error-functional-sample}, together with the observation that since $Z_\mathcal{T}$ and $\tilde{Z}_\mathcal{T}$ are piece-wise linear interpolants of $\mathbf{Z}$ and $\tilde{\mathbf{Z}}$, respectively, we have that:
 \begin{equation*}
	\|Z_\mathcal{T}(\cdot, \omega) - \tilde{Z}_\mathcal{T}(\cdot, \omega)\|_{C^0(\bar{D})} = \|\mathbf{Z}(\omega) - \tilde{\mathbf{Z}}(\omega)\|_\infty.
    \end{equation*}
\end{proof}

This immediately leads to the following Corollary:
\begin{Corollary}
\label{cor: error-1-norm-cov}
    Let $Q_h$ and $\tilde{Q}_h$ be as in Corollary \ref{cor: error-functional-egnv}, and let $C$ be the separable exponential covariance \eqref{eq: p-norm-cov-function}. Then, for any $p \in [1, \infty)$:
    \begin{equation*}
        \mathbb{E}\left[|Q_h - \tilde{Q}_h|^p\right] \lesssim (s - {\tau} {+1})^{-p} \, {\tau}^p,
    \end{equation*}
{where $s$ if the size of the embedding matrix, so that $s = 2^d \prod_{i=1}^d m_i$, for discretisation parameters $m_i, i=1, \dotsc, d$.}
\end{Corollary}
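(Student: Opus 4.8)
The plan is to feed the eigenvalue decay rate for the separable exponential covariance from Lemma \ref{lemma: expo-egnv-decay} directly into the general bound already established in Corollary \ref{cor: error-functional-egnv}. That corollary gives
\begin{equation*}
    \mathbb{E}\left[|Q_h - \tilde{Q}_h|^p\right] \lesssim s^{-\frac{p}{2}} \left(\max_{j=s-k+1, \dotsc, s} \sqrt{\lambda_j}\right)^p \, k^p,
\end{equation*}
so the only quantity left to control is the factor $\left(\max_{j=s-k+1, \dotsc, s}\sqrt{\lambda_j}\right)^p$, which depends on the choice of covariance function only through the eigenvalues $\{\lambda_j\}$ of the embedding matrix.

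First I would invoke the ordering convention from the smoothing procedure in Section \ref{sec: smoothing}, where the eigenvalues are sorted in non-increasing order, $\lambda_1 \geq \dotsb \geq \lambda_s$, and the $k$ discarded eigenvalues are precisely the smallest ones, $\{\lambda_j\}_{j=s-k+1}^s$. Within this discarded block the largest eigenvalue sits at the smallest index, so the maximum is attained at $j = s-k+1$, i.e. $\max_{j=s-k+1, \dotsc, s}\sqrt{\lambda_j} = \sqrt{\lambda_{s-k+1}}$.

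Next, since $C$ is the separable exponential covariance \eqref{eq: p-norm-cov-function}, Lemma \ref{lemma: expo-egnv-decay} guarantees both that the embedding matrix is symmetric positive definite (so that no padding is required and $s = 2^d\prod_{i=1}^d m_i$) and that the eigenvalues decay as $\lambda_j \lesssim j^{-2}$. Evaluating this decay bound at $j = s-k+1$ yields $\lambda_{s-k+1} \lesssim (s-k+1)^{-2} \lesssim (s-k)^{-2}$, where the final step uses $s-k+1 \simeq s-k$ for $s-k \geq 1$. Hence $\sqrt{\lambda_{s-k+1}} \lesssim (s-k)^{-1}$, and raising to the power $p$ gives $\left(\max_{j=s-k+1, \dotsc, s}\sqrt{\lambda_j}\right)^p \lesssim (s-k)^{-p}$. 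Substituting this back into the bound from Corollary \ref{cor: error-functional-egnv} produces exactly $\mathbb{E}\left[|Q_h - \tilde{Q}_h|^p\right] \lesssim s^{-\frac{p}{2}}(s-k)^{-p}\,k^p$, as claimed.

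The argument is essentially a one-line substitution, so I do not anticipate a serious obstacle; the only points requiring care are identifying which index realises the maximum under the non-increasing ordering, and verifying that the $j^{-2}$ decay of Lemma \ref{lemma: expo-egnv-decay} is precisely the rate that converts the abstract eigenvalue factor into the explicit $(s-k)^{-p}$ dependence appearing in the statement.
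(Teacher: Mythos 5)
Your proposal is correct and follows exactly the paper's route: the paper's proof is the one-line observation that the result follows from Corollary \ref{cor: error-functional-egnv} combined with the eigenvalue decay $\lambda_j \lesssim j^{-2}$ of Lemma \ref{lemma: expo-egnv-decay}, and your argument simply spells out the substitution (identifying the maximum at $j=s-k+1$ under the non-increasing ordering and bounding $\sqrt{\lambda_{s-k+1}} \lesssim (s-k)^{-1}$). The only cosmetic remark is that $(s-k+1)^{-2} \leq (s-k)^{-2}$ holds directly since the exponent is negative, so the $s-k+1 \simeq s-k$ step is not even needed.
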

\begin{proof}
    This follows immediately from Corollary \ref{cor: error-functional-egnv} and Lemma \ref{lemma: expo-egnv-decay}, where we used $\prod_{i=1}^d m_i = \frac{s}{2^d}$.
\end{proof}

For the Mat\'ern covariance \eqref{eq: matern-cov-function} where we use smooth periodisation to ensure the positive-definiteness of the (extended) embedding matrix $S$, we can directly use Eq. \eqref{eq: egnv-matern-cov-decay} to infer:

\begin{Corollary}
\label{cor: error-matern-cov}
    Let $Q_h$ and $\tilde{Q}_h$ be as in Corollary \ref{cor: error-functional-egnv}, and let $C$ be the Mat\'ern covariance \eqref{eq: matern-cov-function} satisfying the assumptions in Lemma \ref{thm: matern-egnv-decay}. Then, for any $p \in [1, \infty)$:
    \begin{equation*}
        \mathbb{E}\left[|Q_h - \tilde{Q}_h|^p\right] \lesssim s^{-\frac{p}{2}} {\left(\prod_{i=1}^d m_i\right)^{\frac{p}{2}}} (s - {\tau} {+1})^{-\frac{p}{2}(1+\frac{2\nu}{d})} \, {\tau}^p,
    \end{equation*}
{where $s$ if the size of the embedding matrix after padding, so that $s = 2^d \prod_{i=1}^d (m_i + J_i)$, for discretisation parameters $m_i, i=1, \dotsc, d$ and padding values $J_i, i=1, \dotsc, d$.}
\end{Corollary}
\begin{proof}
    This follows immediately from Corollary \ref{cor: error-functional-egnv} and Lemma \ref{thm: matern-egnv-decay}. 
\end{proof}

The results in Corollaries \ref{cor: error-1-norm-cov} and \ref{cor: error-matern-cov} quantify the error introduced by smoothing the random field $Z$ for the application at hand. In addition, they also provide a rule for choosing the number of eigenvalues ${\tau}_\ell$ to drop on each level in the MLMC estimator. More specifically, observe that:
\begin{align*}
    \mathbb{E}[|Q_{h_\ell} - \tilde{Q}_{h_{\ell-1}}|] &\leq \mathbb{E}[|Q_{h_\ell} - Q_{h_{\ell-1}}|] + \mathbb{E}[|Q_{h_{\ell-1}} - \tilde{Q}_{h_{\ell-1}}|] \nonumber \\
    &\leq C_\alpha h_\ell^\alpha + \mathbb{E}[|Q_{h_{\ell-1}} - \tilde{Q}_{h_{\ell-1}}|].
\end{align*}
For the exponential covariance with $p=1$, we obtain:
\begin{equation*}
    \mathbb{E}[|Q_{h_\ell} - \tilde{Q}_{h_{\ell-1}}|] \leq C_\alpha h_\ell^\alpha + C_s (s_\ell - {\tau}_\ell {+1})^{-1} \, {\tau}_\ell,
\end{equation*}
Equating the decay rate of the discretisation error with the smoothing error, and using that, in this case, no padding is needed, so that $s_\ell = 4 h_\ell^{-2}$, we find the following expression for ${\tau}_\ell$:
\begin{equation}
\label{eq: k-values-p-norm-cov}
    {\tau_\ell = \frac{s_\ell + 1}{\frac{C_s}{2^\alpha C_\alpha} s_\ell^{\frac{\alpha}{2}}+1}},
\end{equation}
{which satisfies $1 \leq \tau_\ell \leq s_\ell$ if and only if $\alpha \leq 2$. This is actually in accordance with our theory since, for the separable exponential covariance function, we already have $\alpha \leq 2$ \citep{teckentrup_multilevel_2012}.}

Analogously, for the Mat\'ern covariance function, we recover:
\begin{equation*}
    \mathbb{E}[|Q_{h_\ell} - \tilde{Q}_{h_{\ell-1}}|] \leq C_\alpha h_\ell^{\alpha} + C_s s_\ell^{-\frac{1}{2}} {(h_\ell)^{-1}} (s_\ell - {\tau}_\ell {+1})^{-\frac{1}{2}(1+\nu)} \, {\tau}_\ell.
\end{equation*}
Note that if the smooth periodisation extension in Section \ref{sec: periodisation} is applied to ensure the positive definiteness of the embedding matrix, we have $s_\ell = 4(h_\ell^{-1}+J_\ell)^2$, where $J_\ell$ is the smoothing parameter. In this case, we determine the following rule for choosing ${\tau}_\ell$:
\begin{equation}
\label{eq: tau-values-matern}
    \left(4(h_\ell^{-1}+J_\ell \right)^2 - {\tau}_\ell {+1})^{-\frac{1}{2}(1+\nu)} {\tau}_\ell = \frac{2 C_\alpha}{C_s} h_\ell^{\alpha {+1}} (h_\ell^{-1}+J_\ell).
\end{equation}
In practice, we solve this equation numerically for ${\tau}_\ell$, with negligible computational cost.

In the context of the MLMC estimator, it is necessary to highlight that in order to obtain an accurate approximation of the solution, only the random field samples on the levels preceding the finest level $L$ {should be smoothed}. This is because the approximation on level $L$ in the telescopic sum must always use a complete representation of the random field, in order to avoid introducing an additional error in the final estimate. In other words, Eq. \eqref{eq: mlmc-estimator} becomes:
\begin{equation}
\label{eq: mlmc-estimator-smoothed}
    \widehat{Q}^{\text{MLMC-CES}}_L = \sum_{\ell = 0}^{L-1} \frac{1}{N_\ell} \sum_{i=1}^{N_\ell} \tilde{Y}_\ell^{(i)} + \frac{1}{N_L} \sum_{i=1}^{N_L} Q_{h_L}^{(i)} - \tilde{Q}_{h_{L-1}}^{(i)} , 
\end{equation}
where $\tilde{Y}_\ell^{(i)} = \tilde{Q}_{h_\ell}^{(i)} - \tilde{Q}_{h_{\ell-1}}^{(i)}$, and $\tilde{Y}_0^{(i)} = \tilde{Q}_{h_0}^{(i)}$. An alternative implementation involves first finding the level $\ell^* < L$ up to which smoothing is necessary using Eq. \eqref{eq: mlmc-h0-bound-exp-covariance} or \eqref{eq: mlmc-h0-bound-matern-covariance}, and subsequently applying the smoothing technique only for $\ell = 0, \dotsc, \ell^*$. However, in our numerical experiments below, this did not appear to be favourable.

Hence, we adapt Algorithm \ref{alg: MLMC-and-CE} to estimate $\tilde{Y}^{\text{MC}}_{\ell, N_\ell}$ in the MLMC method \eqref{eq: mlmc-estimator-smoothed} by altering step 4(b) as follows:
\begin{Algorithm}
\label{alg: MLMC-and-CES}
Computes an MC estimate of $\mathbb{E}[Y_\ell]$ on a level $\ell$ using the smoothed circulant embedding method to sample from $\tilde{k}(\cdot, \cdot)$ given the number of samples $N_\ell$ and the number of eigenvalues to drop ${\tau}_\ell$.
\begin{enumerate}
    \item[{4(b)}] {Compute $\{\Lambda_j^{\mathrm{ord}}\}_{j=1}^{s_\ell}$ and set $\{\Lambda_j^\mathrm{ord}\}_{j=s_\ell-{\tau}_\ell+1}^{s_\ell} = 0$.} Let $\tilde{\boldsymbol{\Lambda}}$ denote the resulting vector. Evaluate $\mathbf{w} \coloneqq \tilde{\boldsymbol{\Lambda}} \odot \boldsymbol{\xi}$.
\end{enumerate}
\end{Algorithm}
{\noindent Note that here we are computing a permutation $\pi : \{1, 2, \dotsc, s_\ell\} \rightarrow \{1, 2, \dotsc, s_\ell\}$ such that the ordered eigenvalues $\{\Lambda_j^{\mathrm{ord}}\}_{j=1}^{s_\ell}$ are {given by} $\{\Lambda_{\pi(j)}\}_{j=1}^{s_\ell}$. The vector $\tilde{\boldsymbol{\Lambda}}$ is then obtained by applying the inverse permutation $\pi^{-1}$ to the truncated eigenvalues.}

\section{Numerical Results}
\label{sec: numerical-results}

In this section, we propose a series of numerical experiments which are intended to establish the performance of the smoothed circulant embedding method. We focus on MLMC applied to the PDE \eqref{eq: pde-model} with a log-normal random field $k(\mathbf{x}, \omega)$ on $D=(0,1)^2$ and for $f \equiv 1$. We also offer a brief comparison with the level-dependent truncation of the KL-expansion in \citep{teckentrup_further_2013}. {The code used to carry out these experiments can be found on the Github repository \url{https://github.com/anastasia5598/circulant-embedding-with-smoothing}}.

We consider two different quantities of interest, namely $Q = u\left(\frac{7}{15}, \frac{7}{15}\right)$ and $Q =  \|u(\cdot, \omega)\|_{L^2(D)}$. In fact, the point evaluation of the solution to the PDE \eqref{eq: pde-model} is not a bounded functional on the Sobolev space $H_0^1(D)$ in domains with more than one dimension. Nonetheless, convergence rates of the form in Proposition \ref{prop: discretisation-error} can still be obtained as shown in \citep{teckentrup_multilevel_2012}.

In the absence of an analytical solution to the two-dimensional PDE in Eq. \eqref{eq: pde-model}, we must perform a set of additional steps in order to obtain an estimate for the MSEs $e\left(\widehat{Q}^{\text{MC}}_{h,N}\right)$ and $e\left(\widehat{Q}^{\text{MLMC}}_{L}\right)$. In particular, a term which arises in both errors is the bias $(\mathbb{E}[Q_h - Q])^2$, which can be computed by first assuming that the decay in $\lvert \mathbb{E}[Q_h-Q] \rvert$ is monotonic when $h \leq h'$, for some $h' \in \mathbb{Q}$, and that:
\begin{equation*}
    |\mathbb{E}[Q_h-Q]| = C_\alpha \, h^{\alpha}, \quad \alpha > 0.
\end{equation*}
Then, using Richardson's extrapolation \citep{richardson_approximate_1911} with extrapolant $Q_{2h}$, we obtain:
\begin{equation*}
    |\mathbb{E} \left[Q_h - Q \right]| = \left| \frac{\mathbb{E}\left[Q_{h} - Q_{2h}\right]}{1-2^{\alpha}} \right|,
\end{equation*}
which we can now use to compute the bias numerically.
However, it is important to distinguish that, when smoothing is introduced, the extrapolant used in Richardson's extrapolation is $\tilde{Q}_{2h}$. Under the following assumption:
\begin{equation*}
    |\mathbb{E}[\tilde{Q}_{h}-Q]| = \tilde{C}_\alpha \, h^{\alpha}, \quad \alpha > 0,
\end{equation*}
we can alter the above derivation accordingly to obtain: 
\begin{equation*}
    |\mathbb{E} \left[Q_h - Q \right]| = \left| \frac{\mathbb{E}[Q_{h} - \tilde{Q}_{2h}]}{1 - \frac{\tilde{C}_\alpha}{C_\alpha} 2^{\alpha}}\right|.
\end{equation*}

While \citep{teckentrup_further_2013} gives theoretical values for $\alpha$ and $\beta$ for the functionals under consideration, here we determine these numerically by estimating $|\mathbb{E}\left[Q_{h} - Q_{2h}\right]|$ on consecutively finer meshes, and computing the best linear fit through these approximations on a log scale. In particular, the expectation of the difference above is replaced by its MC estimate in order to approximate it:
\begin{equation*}
    \lvert \mathbb{E}[Q_h - Q_{2h}] \rvert = \frac{1}{N} \left| \sum_{i=1}^N Q_h^{(i)} - Q_{2h}^{(i)} \right|,
\end{equation*}
where $N$ is chosen large enough to obtain a suitable estimate, and $Q_h^{(i)}$ and $Q_{2h}^{(i)}$ are computed with the same underlying sample of the random field $k(\mathbf{x}, \omega^{(i)})$. In the case of smoothing, $\tilde{Q}_{2h}^{(i)}$ is calculated with the smoothed sampled used in the approximation of $Q_h^{(i)}$. An analogous approach can be followed to obtain estimates for $\gamma$ and $\beta$. These are necessary for establishing the theoretical bound for the cost of the MLMC algorithm using Theorem \ref{thm: the-only-theorem}.

Further, throughout all our numerics, we use $h_\ell = 2^{-\ell} h_0$ as discretisation levels in the MLMC estimator. We use the same mesh size on level $\ell$ for generating the random field samples.

\begin{figure}
\centering
\begin{subfigure}[t]{.49\textwidth}
  \centering
  \includegraphics[width=\linewidth]{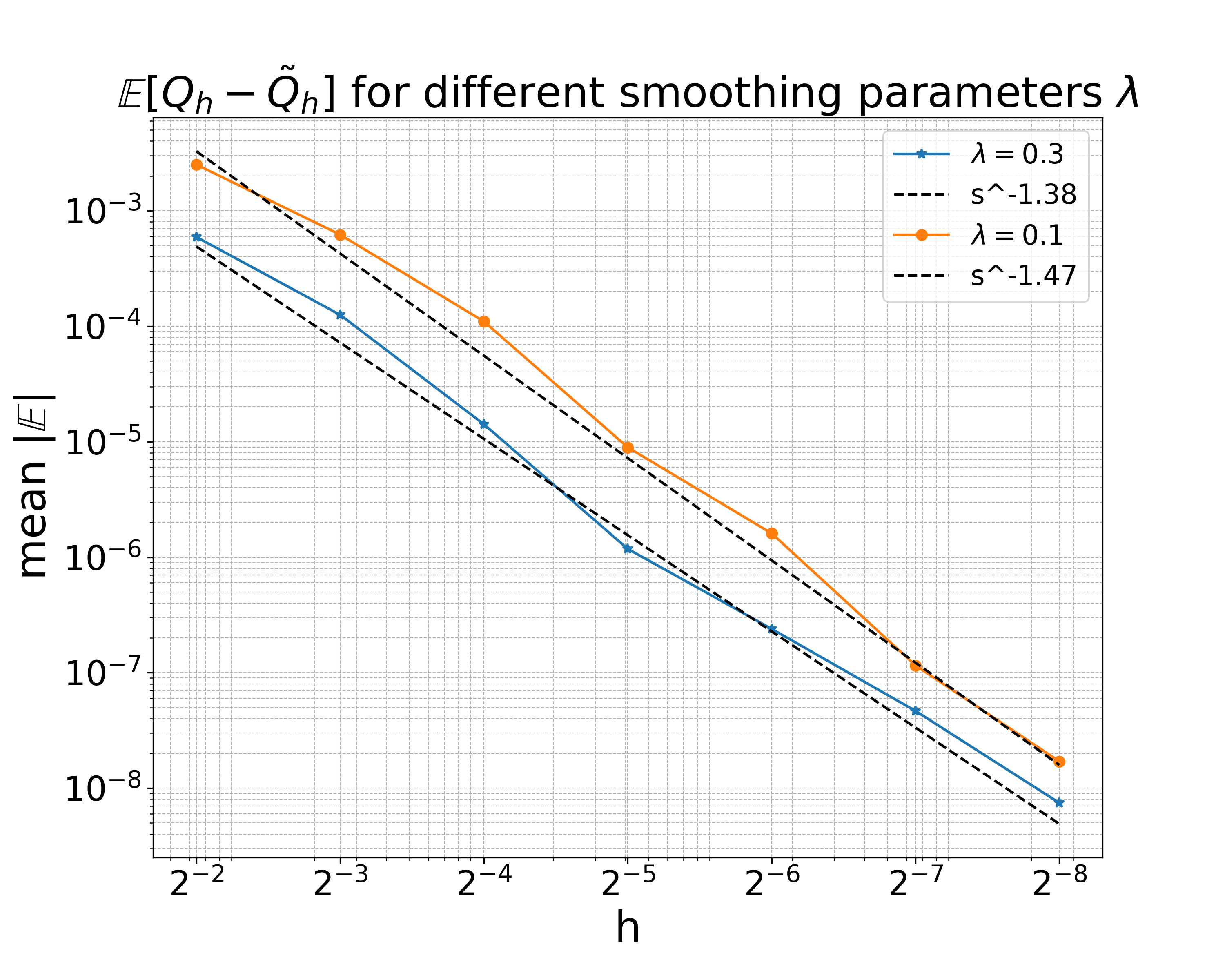}
  \caption{$\log \lvert \mathbb{E}[Q_{h} - \tilde{Q}_h] \rvert$.}
  \label{fig: mean-smoothing-error-expo}
\end{subfigure}%
\begin{subfigure}[t]{.49\textwidth}
  \centering
  \includegraphics[width=\linewidth]{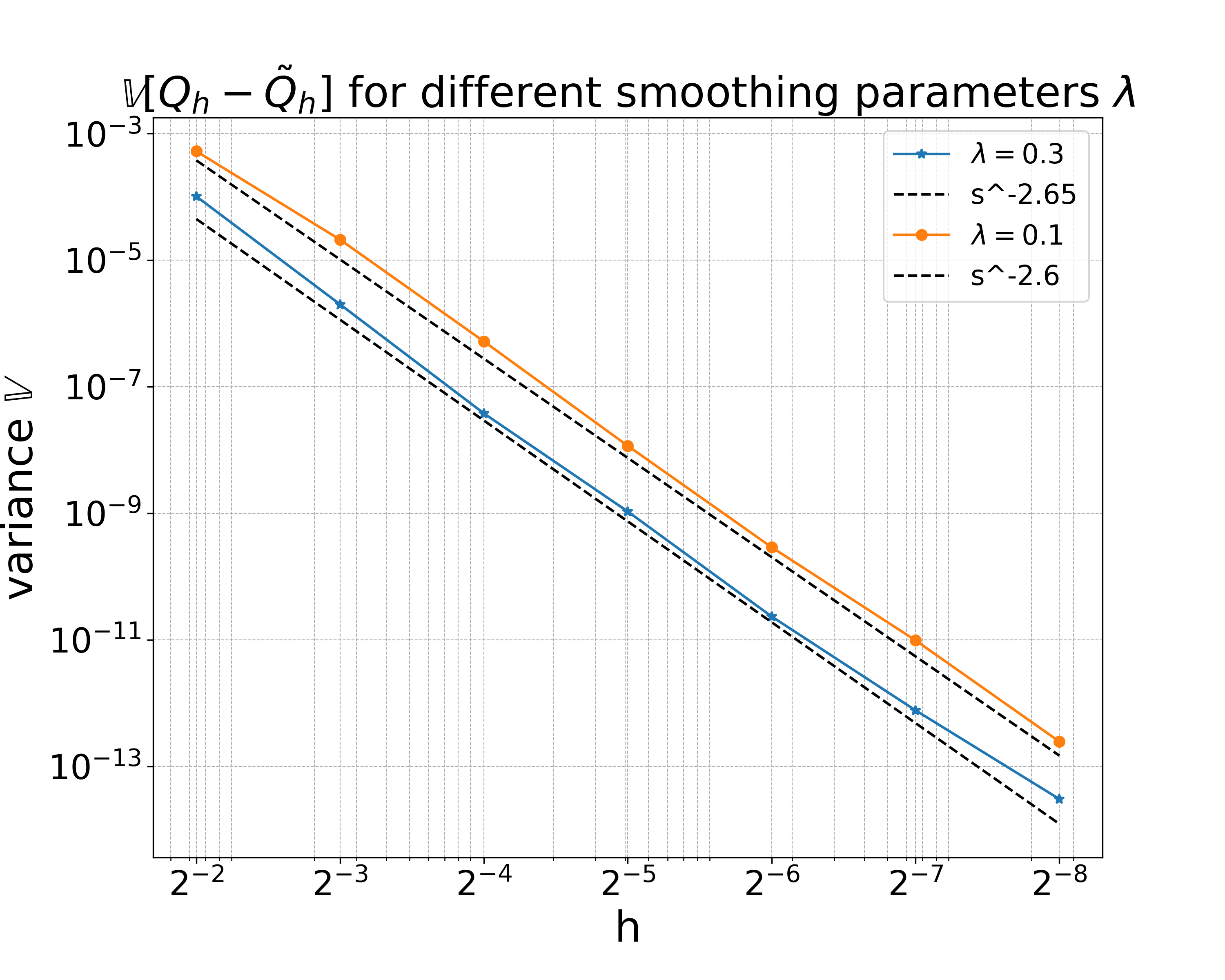}
  \caption{$\log \mathbb{V}[Q_{h} - \tilde{Q}_h]$.}
  \label{fig: variance-smoothing-expo}
\end{subfigure}
\caption{Decay rate of $\lvert \mathbb{E}[Q_{h} - \tilde{Q}_h] \rvert$ and $\mathbb{V}[Q_{h} - \tilde{Q}_h]$ with $h$ on a log scale, for the exponential covariance \eqref{eq: p-norm-cov-function} with $\lambda = 0.3$ and $\lambda = 0.1$, using ${\tau = \sqrt{s}}$. The quantity of interest is $Q = u(\mathbf{x}^*, \mathbf{y}^*)$.}
\label{fig: smoothing-error-expo}
\end{figure}

Finally, we use CPU times to quantify the computational cost of the studied algorithms. These were obtained by running our Python implementation remotely on the Cirrus High Performance Computing system {(2.1 GHz, 18-core Intel Xeon E5-2695 (Broadwell) series processor, 256 GB of memory \cite{epcc_cirrus_2023})}. In addition, we exploit Python's $\texttt{time}$ library to measure the CPU time, and quantify the error in terms of the RMSEs $e\left(\widehat{Q}^{\text{MC}}_{h,N}\right)$ and $e\left(\widehat{Q}^{\text{MLMC}}_{L}\right)$.

\begin{figure}
\centering
\begin{subfigure}[t]{.49\textwidth}
  \centering
  \includegraphics[width=\linewidth]{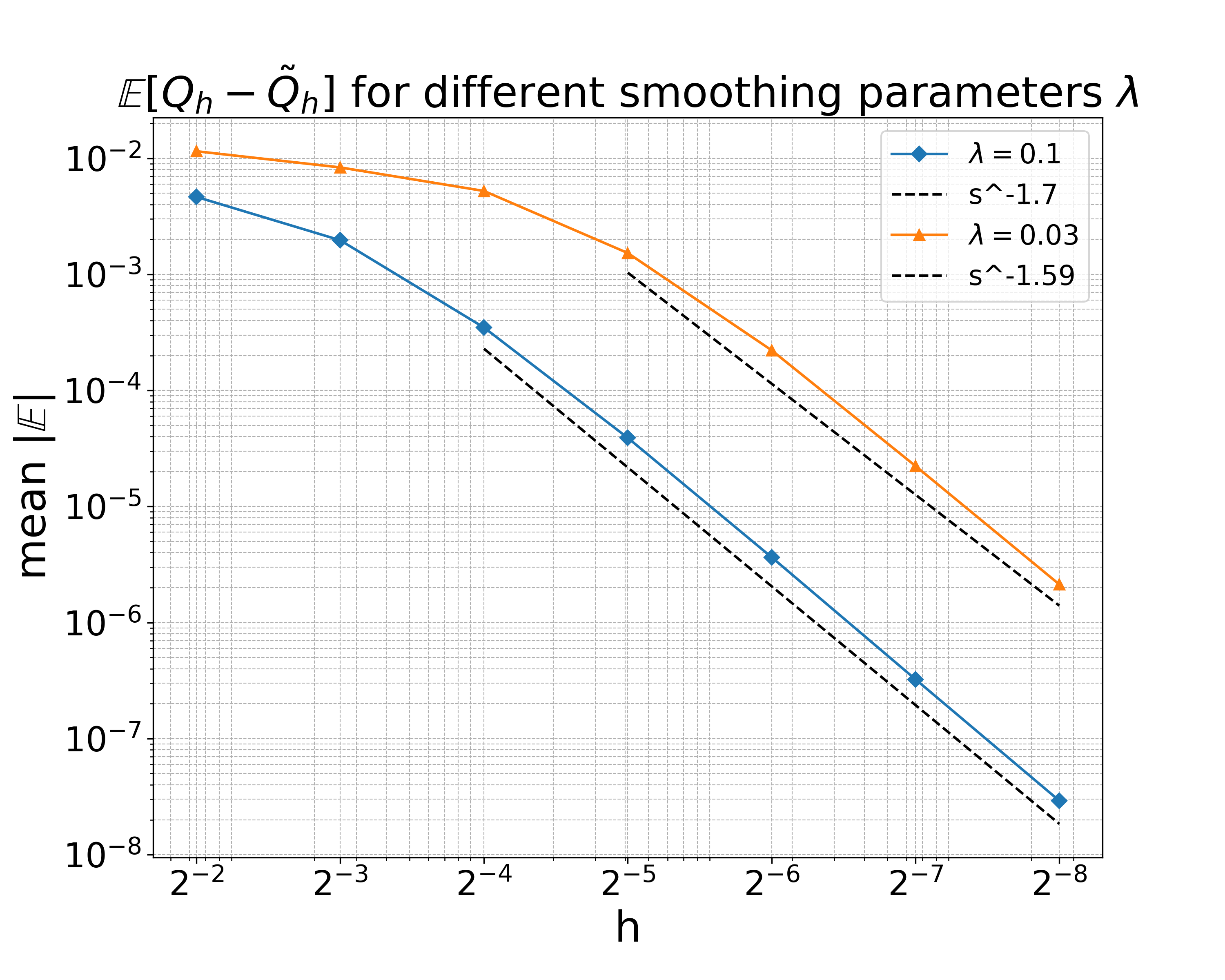}
  \caption{$\log \lvert \mathbb{E}[Q_{h} - \tilde{Q}_h] \rvert$.}
  \label{fig: mean-smoothing-error-mat}
\end{subfigure}%
\begin{subfigure}[t]{.49\textwidth}
  \centering
  \includegraphics[width=\linewidth]{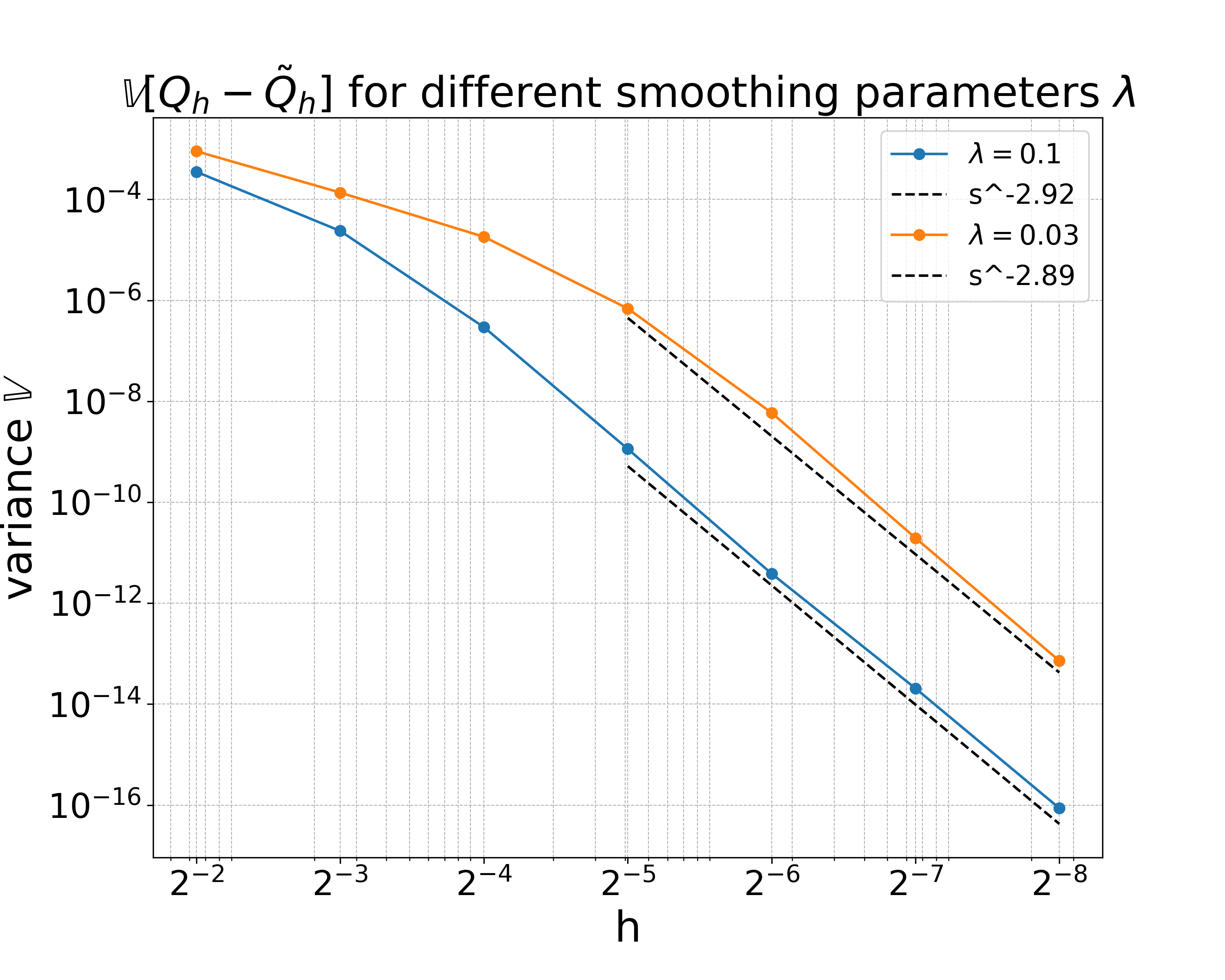}
  \caption{$\log \mathbb{V}[Q_{h} - \tilde{Q}_h]$.}
  \label{fig: variance-smoothing-mat}
\end{subfigure}
\caption{Decay rate of $\lvert \mathbb{E}[Q_{h} - \tilde{Q}_h] \rvert$ and $\mathbb{V}[Q_{h} - \tilde{Q}_h]$ with $h$ on a log scale, for the Mat\'ern covariance \eqref{eq: matern-cov-function} with $\lambda = 0.1$ and $\lambda = 0.03$. Here, {${\tau}$ is computed using Eq. \eqref{eq: tau-definition}}. The quantity of interest is $Q = \|u(\cdot, \omega)\|_{L^2(D)}$. }
\label{fig: smoothing-error-mat}
\end{figure}

\subsection{Smoothing error}
\label{sec: smoothing-error-numerics}

Here, we explore a numerical example aimed at demonstrating the theoretical bounds in Theorem \ref{thm: error-functional-sample}. We choose the exponential covariance \eqref{eq: p-norm-cov-function} with $p=1$, $\sigma^2 = 1$, and use {$\lambda = 0.3$ and $\lambda=0.1$}, so that we are in the setting of Corollary \ref{cor: error-1-norm-cov}. For the functional $Q = \|u(\cdot, \omega)\|_{L^2(D)}$, we expect $\alpha = 1$ from \citep{teckentrup_further_2013}. Hence, letting $\alpha = 1$ in Eq. \eqref{eq: k-values-p-norm-cov}, we obtain:
\begin{equation*}
    {\tau = \frac{s+1}{\frac{C_s}{2C_\alpha} s^{\frac{1}{2}}+ 1}}.
\end{equation*}
From Corollary \ref{cor: error-1-norm-cov}, this choice of ${\tau}$ yields the following bounds:
\begin{equation*}
    \mathbb{E}[|Q_{h} - \tilde{Q}_h|] \lesssim s^{-\frac{1}{2}} \quad \text{ and } \quad \mathbb{E}[|Q_{h} - \tilde{Q}_h|^2] \lesssim s^{-1}.
\end{equation*}


{Thus, in Figure \ref{fig: smoothing-error-expo}, we show the decline in $\lvert\mathbb{E}[Q_{h} - \tilde{Q}_h] \rvert$ and $\mathbb{V}[Q_{h} - \tilde{Q}_h]$ with $h$ on a log scale for $\tau = \sqrt{s}$. We observe that we obtain a much faster decay rate than predicted by the theory.}


{We repeat the above experiment for the Mat\'ern covariance with $\sigma^2 = 1$, $\nu = 1.5$ and use $\lambda = 0.1$ and $\lambda = 0.03$. We solve Eq. \eqref{eq: tau-values-matern} to find the values of $\tau$ which yield the same decay rate as in the separable exponential case above. The results are shown in Figure \ref{fig: smoothing-error-mat}, where we observe a similar trend to the one in Figure \ref{fig: smoothing-error-expo}.}


\subsection{Random field with exponential covariance}
\label{sec: 0.3-example}

In this section, we analyse the computational complexity of MC and MLMC using circulant embedding with and without smoothing. In particular, we focus on the exponential covariance function \eqref{eq: p-norm-cov-function} with norm $p=1$ and correlation lengths $\lambda = 0.3$ and $\lambda = 0.1$.

\begin{figure}
\centering
\begin{subfigure}[b]{.49\textwidth}
  \centering
  \includegraphics[width=\linewidth]{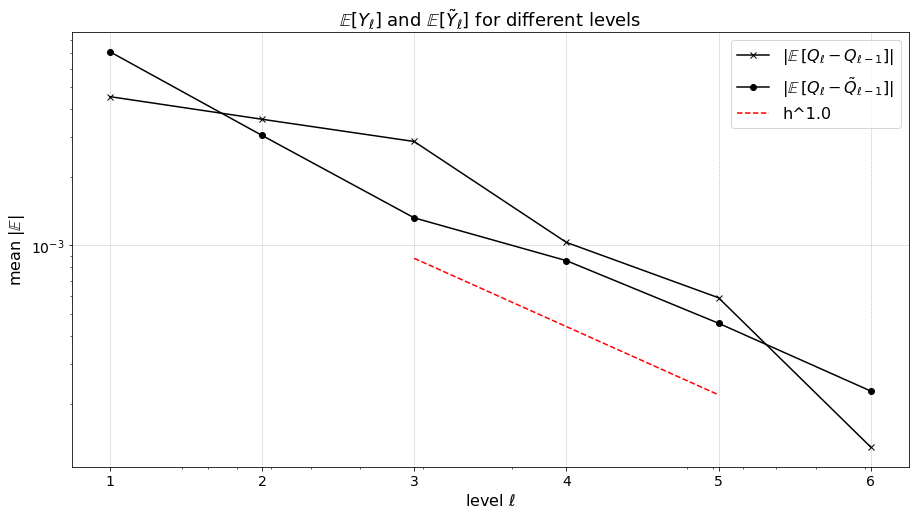}
  \caption{$\log \lvert \mathbb{E}[Q_{h_\ell} - Q_{h_{\ell-1}}] \rvert$ and $\log \lvert \mathbb{E}[Q_{h_\ell}-\tilde{Q}_{h_{\ell-1}}] \rvert$.}
  \label{fig: 0.3-alpha}
\end{subfigure}%
\begin{subfigure}[b]{.49\textwidth}
  \centering
  \includegraphics[width=\linewidth]{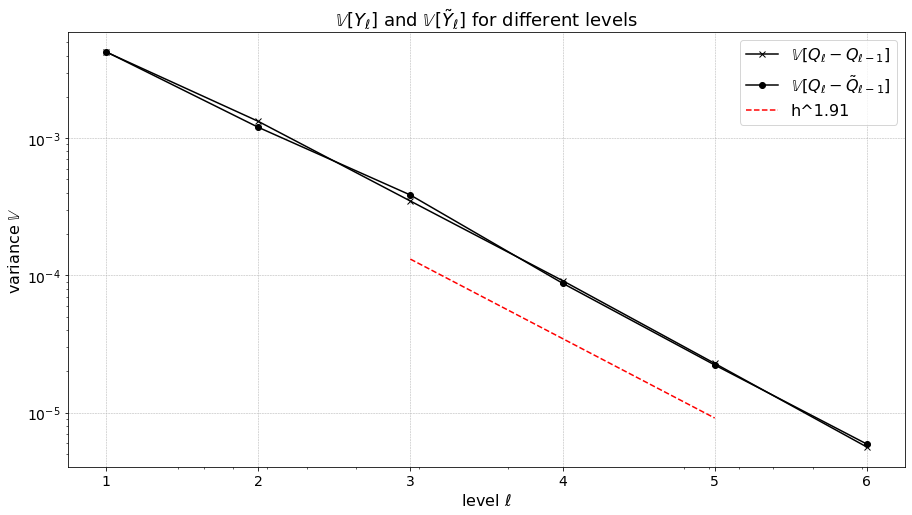}
  \caption{$\log \mathbb{V}[Q_{h_\ell} - Q_{h_{\ell-1}}]$ and $\log \mathbb{V}[Q_{h_\ell}-\tilde{Q}_{h_{\ell-1}}]$.}
  \label{fig: 0.3-beta}
\end{subfigure}

\begin{subfigure}[b]{.49\textwidth}
  \centering
  \includegraphics[width=\linewidth]{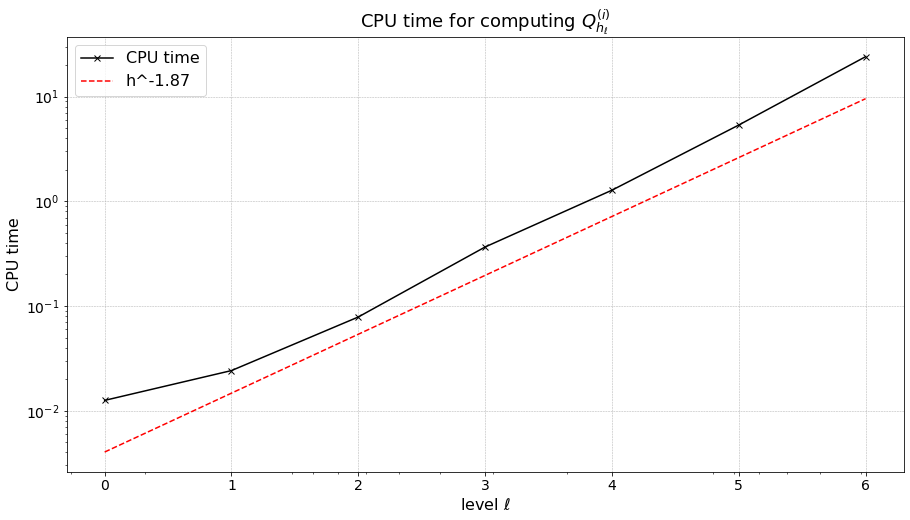}
  \caption{Cost per sample $\mathcal{C} (Q_{h_\ell}^{(i)})$.}
  \label{fig: 0.3-gamma}
\end{subfigure}
\begin{subfigure}[b]{.49\textwidth}
  \centering
  \includegraphics[width=\linewidth]{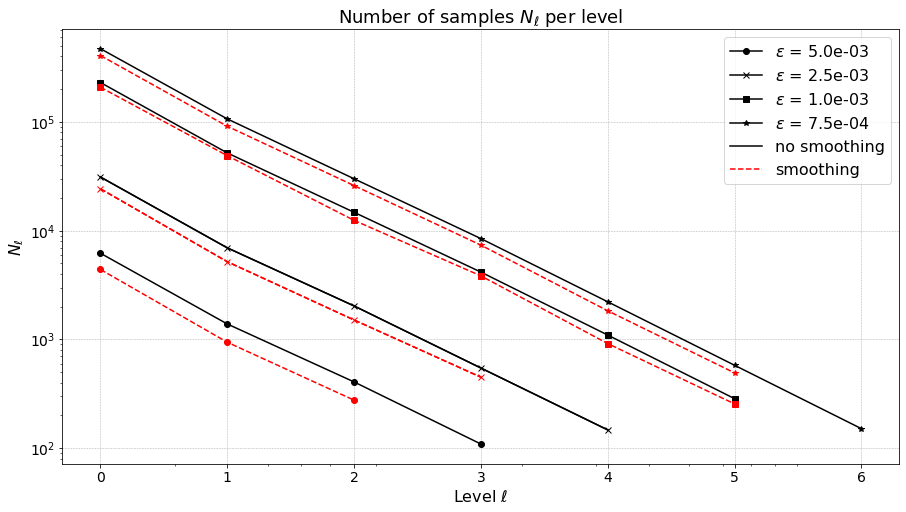}
  \caption{Number of samples $N_\ell$ for different $\varepsilon$.}
  \label{fig: 0.3-nl}
\end{subfigure}
\caption{Plots of $\alpha$ $\beta$, and $\gamma$ and their best linear fit, and of the number of samples $N_\ell$ on a log scale computed with and without smoothing for $\lambda = 0.3$ and $\sigma^2 = 1$ using MLMC with $\ell = 0, \dots, 6$. The quantity of interest is the mean value of the pressure at $\mathbf{x}^*=\left(\frac{7}{15},\frac{7}{15}\right)$. }
\label{fig: 0.3-alpha-beta}
\end{figure}

Accordingly, Figure \ref{fig: 0.3-alpha-beta} illustrates the values obtained for $\alpha$ and $\beta$ for the MC and MLMC algorithms used to compute the quantity of interest $\mathbb{E}[Q]$. In this case, we model the random coefficient in Eq. \eqref{eq: pde-model} as a random field whose covariance function has correlation length $\lambda = 0.3$ and variance $\sigma = 1$. As noted in \eqref{eq: mlmc-h0-bound-exp-covariance}, the optimal choice for the coarsest MLMC level is such that $h_0$ is slightly smaller than $\lambda$, which yields $h_0 = 2^{-2}$. 

Specifically, Figure \ref{fig: 0.3-alpha} shows the decay in $\log |\mathbb{E}[Q_h - Q_{2h}]|$, which, due to the definition of the MLMC levels, is equal to $\log |\mathbb{E}[Q_{h_\ell} - Q_{h_{\ell-1}}]|$, for $\ell = 1, \dotsc, 6$. The line through these points has a slope of around $1$, which implies $\alpha = 1$. Further, we use ${\tau_\ell = \sqrt{s_\ell}}$ for smoothing the samples on level $\ell$, and the same coarsest mesh $h_0 = 2^{-2}$ for MLMC with smoothed random field samples. As expected, this gives the same slope $\alpha$, with a slightly smaller constant $\tilde{C}_\alpha$.

In addition, Figure \ref{fig: 0.3-beta} portrays the decrease in $\log \mathbb{V}[Y_\ell]$ with level $\ell$, $\ell = 1, \dotsc, 6$. Following the same procedure as above, this gives $\beta \approx 1.91$, which confirms that $\beta \approx 2 \alpha$, as expected from the theory. Similar to the $\alpha$ plot, if we introduce smoothing in the MLMC algorithm with ${\tau_\ell = \sqrt{s_\ell}}$, the $\mathbb{V}[Y_\ell]$ values are unaltered, so that $\beta$ remains unchanged. 

Further, Figure \ref{fig: 0.3-gamma} depicts the growth in the cost of computing one sample $Q_{h_\ell}^{(i)}$ with level $\ell$. On a log scale, the resulting line has a slope of approximately $1.87$, so that $\gamma \approx 1.87$. This does not change when smoothing is introduced for the following reasons:
\begin{itemize}
    \item on the one hand, the main contribution to the cost arises from numerically solving the PDE \eqref{eq: pde-model} which is invariable to the type of samples involved;
    \item the complexity of the $\texttt{FFT}$ routine of $\mathcal{O}(s \log s)$ does not depend on the entries in the vector of eigenvalues $\boldsymbol{\Lambda} \in \mathbb{R}^s$, only on its dimension.
\end{itemize}

Moreover, Figure \ref{fig: 0.3-nl} presents the number of samples $N_\ell$ required per level $\ell = 0, \dotsc, L$ for different accuracies $\varepsilon$, where $N_\ell$ is computed using Eq. \eqref{eq: optimal-nl}. This is shown for the MLMC estimator where the random field samples are smoothed with ${\tau_\ell = \sqrt{s_\ell}}$, as before. For the imposed RMSE, we adopt equally spaced values between $\varepsilon = 10^{-1}$ and $\varepsilon = 5 \cdot 10^{-4}$. This figure also emphasises the discrepancy between the number of samples needed on the coarsest level $h_0$ and the finest level $h_L$. For example, to achieve an accuracy of $10^{-3}$, five levels are necessary for the MLMC estimator which integrates smoothing technique. Particularly, $N_0 \approx 10^5$ samples are computed on the coarsest level, while $N_5 \approx 10^2$ samples are estimated on the finest level.

\begin{figure}[t]
\centering
\includegraphics[width=0.49\textwidth]{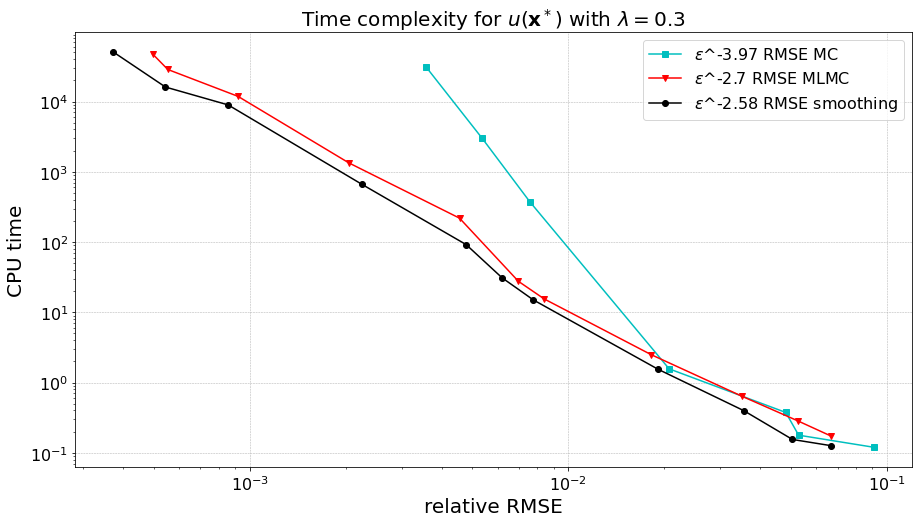}
\caption{Performance plot on a log scale for $\lambda = 0.3$ and $\sigma^2 = 1$ using Monte Carlo and Multilevel Monte Carlo, with and without smoothing. The quantity of interest is the mean value of the pressure at $\mathbf{x}^*=\left(\frac{7}{15},\frac{7}{15}\right)$.}
\label{fig: 0.3-mc-mlmc-smoothing}
\end{figure}

Lastly, we expect $\mathcal{C}_\varepsilon\left(\widehat{Q}^{\text{MC}}_{h, N}\right) \lesssim \varepsilon^{-2-\gamma/\alpha} \approx \varepsilon^{-3.87}$ for the MC estimator, as explained at the end of Section \ref{sec: mc-simulations}. We also anticipate from Theorem \ref{thm: the-only-theorem} that $\mathcal{C}_\varepsilon\left(\widehat{Q}^{\text{MLMC}}_L\right) \lesssim \varepsilon^{-2} (\log \varepsilon)^2$ for the MLMC estimator both with and without smoothing, as $\beta \approx \gamma$ in both cases. Accordingly, Figure \ref{fig: 0.3-mc-mlmc-smoothing} displays, on a log scale, the CPU time taken to achieve a relative RMSE less than $\varepsilon$ with each of the MC, MLMC with and without smoothing estimators. For the MLMC simulations we use the same values of $\varepsilon$ as in the $N_\ell$ figure, while for the MC simulations we only consider equally spaced values between $\varepsilon = 10^{-1}$ and $\varepsilon = 5 \cdot 10^{-3}$, due to the long CPU times involved. 

Indeed, this graph illustrates that the best performing algorithm is the MLMC with smoothing. In particular, we observe that the MC cost has a slope of roughly $-3.97$, as expected, while both versions of the MLMC estimator achieve a computational complexity of $C_\varepsilon \varepsilon^{-2.6}$. The main contribution that the smoothing technique brings is to decrease the constant $C_\varepsilon$. Specifically, if no smoothing is introduced, $C_\varepsilon \approx 7.5 \cdot 10^{-5}$, while with smoothing we have $C_\varepsilon \approx 5.5 \cdot 10^{-5}$.  

\begin{figure}[t]
\begin{subfigure}[t]{.49\textwidth}
  \centering
  \includegraphics[width=\linewidth]{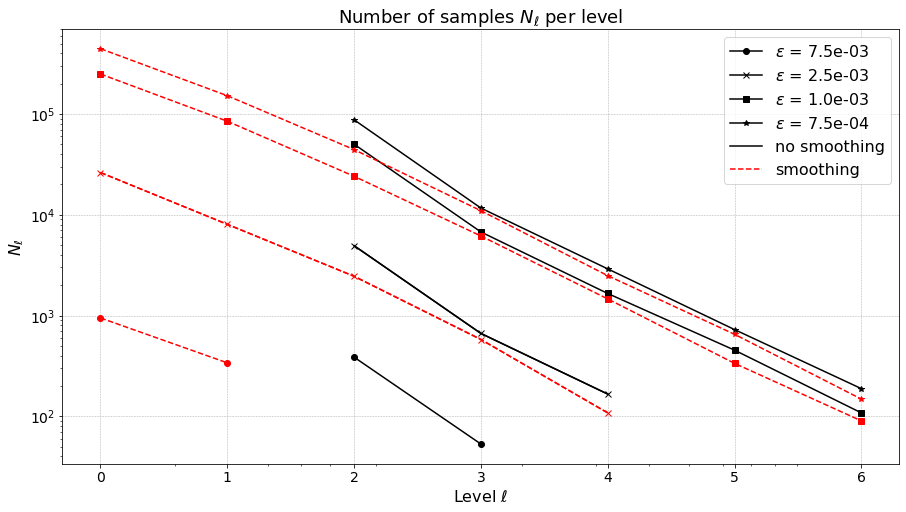}
  \caption{Number of samples $N_\ell$ for different $\varepsilon$.}
  \label{fig: 0.1-nl}
\end{subfigure}
\begin{subfigure}[t]{.49\textwidth}
  \centering
  \includegraphics[width=\linewidth]{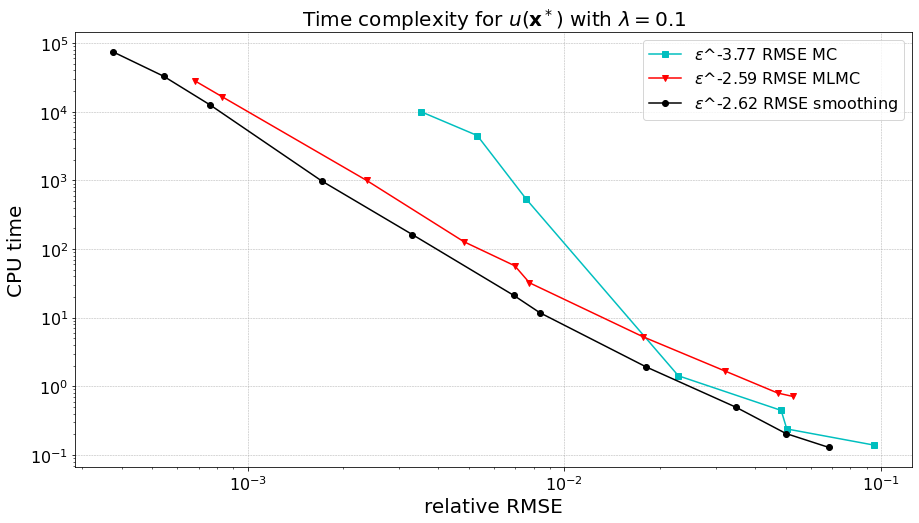}
  \caption{CPU times of MC, MLMC, MLMC-CES.}
  \label{fig: 0.1-mc-mlmc-smoothing}
\end{subfigure}
\caption{Plot of the number of samples $N_\ell$ on a log scale and relative RMSE for $\lambda = 0.1$ and $\sigma^2 = 1$ using Monte Carlo and Multilevel Monte Carlo with and without smoothing with $\ell = 0, \dots, 6$. The quantity of interest is the mean value of the pressure at $\mathbf{x}^*=\left(\frac{7}{15},\frac{7}{15}\right)$.}
\label{fig: 0.1-nl-rmse}
\end{figure}

Note that integrating the smoothing technique in the MLMC algorithm does not yield major improvements in the computational cost in this case. This is expected {since $\lambda$ is still relatively large and the restriction $h_0 \leq 2^{-2}$ is not too severe.} We use $h_0 = 2^{-2}$ for the coarsest grid in both the smoothing and no-smoothing examples. 

Let us now investigate the case when the covariance function of $k(\mathbf{x}, \omega)$ has correlation length $\lambda = 0.1$. Here, the MLMC method {without smoothing} is severely limited in the choice of coarsest mesh, namely $h_0 = 2^{-4}$. In this case, smoothing the random field samples plays an essential role, since it allows $h_0 = 2^{-2}$ as before. {Note that the benefit of the smoothing decreases as $\varepsilon$ decreases. This is expected, since a very fine mesh size $h_L$ is required for small $\varepsilon$ and the restriction on the coarsest mesh hence poses less severe restrictions on the number of levels that can be used in the MLMC estimator.}

In this case, the plots for $\alpha$, $\beta$, and $\gamma$ look analogous, as only the constants $C_\alpha$, $C_\beta$ and $C_\gamma$ are different, and so we do not present these here. Rather, we focus on how the number of samples per level $N_\ell$ varies, and how the overall computational complexities of MC and MLMC with and without smoothing change.

Specifically, Figure \ref{fig: 0.1-nl} illustrates the number of samples $N_\ell$ per level for different accuracies $\varepsilon$. It is important to highlight that, in this case, level $0$ for MLMC without smoothing has a mesh size $h_0 = 2^{-4}$, which corresponds to level $2$ for the MLMC with smoothing estimator. Hence, we observe that, for an accuracy of $\varepsilon = 10^{-3}$ for instance, standard MLMC uses five levels, while MLMC-CES uses seven levels. However, in the former, the mesh sizes vary from $h_0 = 2^{-4}$ to $h_4 = 2^{-8}$, while in the latter the grid resolutions range from $h_0 = 2^{-2}$ to $h_5 = 2^{-7}$. This yields significant computational savings, as it allows for cheap approximations to be exploited.

The argument above is supported by Figure \ref{fig: 0.1-mc-mlmc-smoothing}, which displays the computational complexity of the three surveyed algorithms. In particular, we expect $\mathcal{C}_\varepsilon\left(\widehat{Q}^{\text{MC}}_{h, N}\right) \lesssim \varepsilon^{-3.87}$ and $\mathcal{C}_\varepsilon\left(\widehat{Q}^{\text{MLMC}}_L\right) = C_\varepsilon \varepsilon^{-2} (\log \varepsilon)^2$ for the MLMC estimator both with and without smoothing, as $\alpha$, $\beta$ and $\gamma$ are as in the $\lambda = 0.3$ case. Indeed, these are very closely recovered in our experiments, as highlighted in this figure. However, one crucial aspect here is the fact that the MC estimator outperforms MLMC for large accuracies, behaviour alleviated by introducing the smoothing. In addition, the constant $C_\varepsilon$ decreases from roughly $1.6 \cdot 10^{-4}$ in the no smoothing case to around $6.5 \cdot 10^{-5}$ for the estimator with smoothing, giving rise to the computational savings observed.

\subsection{Random field with Mat\'ern covariance}

Throughout this section, we concentrate on the Mat\'ern covariance in Eq. \eqref{eq: matern-cov-function}. We fix the variance $\sigma^2 = 1$ and the smoothness parameter $\nu = 1.5$, and vary the length scale $\lambda$. In particular, we explore the computational complexity of MC, MLMC and MLMC-CES for estimating the quantity of interest $Q = \|u(\cdot, \omega)\|_{L^2(D)}$ using $\lambda = 0.1$ and $\lambda = 0.03$. {In addition, we choose the values of $\tau$ by solving Eq. \eqref{eq: tau-values-matern} as in Section \ref{sec: smoothing-error-numerics}.}

\begin{figure}
\centering
\begin{subfigure}[b]{.49\textwidth}
  \centering
  \includegraphics[width=\linewidth]{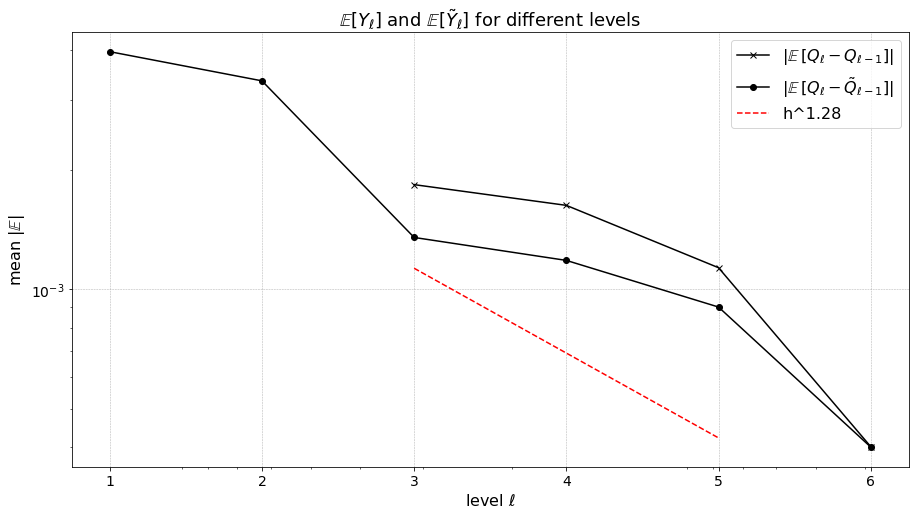}
  \caption{$\log \lvert \mathbb{E}[Q_{h_\ell} - Q_{h_{\ell-1}}] \rvert$ and $\log \lvert \mathbb{E}[Q_{h_\ell}-\tilde{Q}_{\ell-1}] \rvert$.}
  \label{fig: 0.03-alpha}
\end{subfigure}%
\begin{subfigure}[b]{.49\textwidth}
  \centering
  \includegraphics[width=\linewidth]{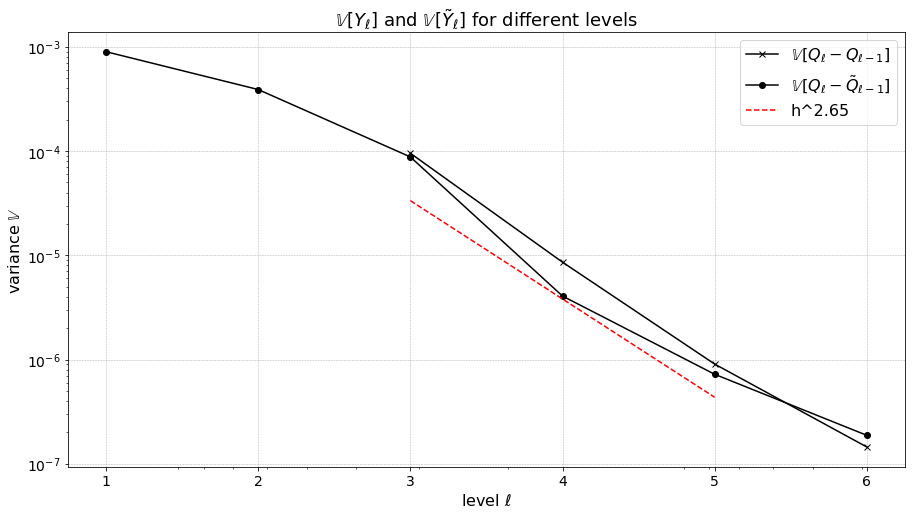}
  \caption{$\log \mathbb{V}[Q_{h_\ell} - Q_{h_{\ell-1}}]$ and $\log \mathbb{V}[Q_{h_\ell}-\tilde{Q}_{\ell-1}]$.}
  \label{fig: 0.03-beta}
\end{subfigure}

\caption{Plots of $\alpha$ and $\beta$ on a log scale for $\lambda = 0.03$, $\sigma^2 = 1$ and $\nu=1.5$ using Multilevel Monte Carlo with and without smoothing with $\ell = 0, \dots, 6$. The quantity of interest is the mean value of the L2 norm $\|u(\cdot, \omega)\|_{L^2(D)}$. }
\label{fig: 0.03-alpha-beta}
\end{figure}

Note that changing the covariance function does not alter the computational cost of computing one sample $\mathcal{C}(Q_{h_\ell}^{(i)})$, and so we keep $\gamma \approx 1.87$ throughout this section.

Now, Figure \ref{fig: 0.03-alpha-beta} portrays the values obtained for $\alpha$ and $\beta$ for the MC and MLMC algorithms used to compute the quantity of interest $\mathbb{E}[Q]$ when the random field coefficient in \eqref{eq: pde-model} has length scale $\lambda = 0.03$. Since the two cases $\lambda = 0.1$ and $\lambda = 0.03$ give similar results for $\alpha$ and $\beta$, we only focus on the latter here, similar to the previous section. In this case, Eq. \eqref{eq: mlmc-h0-bound-matern-covariance} suggests $h_0 = 2^{-4}$, which is the coarsest mesh we use in estimating $\alpha$ and $\beta$ without smoothing. 

In particular, Figure \ref{fig: 0.03-alpha} illustrates the mean order of convergence $\alpha$. This is achieved by computing $\log |\mathbb{E}[Q_{h_\ell} - Q_{h_{\ell-1}}]|$ and $\log |\mathbb{E}[Q_{h_\ell} - \tilde{Q}_{h_{\ell-1}}]|$ on different levels. Both lines have a slope of $\alpha \approx 1.28$ on average, but with two different constants $C_\alpha$ and $\tilde{C}_\alpha$.

Moreover, Figure \ref{fig: 0.03-beta} depicts the decay in $\log \mathbb{V}[Q_{h_\ell} - Q_{h_{\ell-1}}]$ and $\log \mathbb{V}[Q_{h_\ell} - \tilde{Q}_{\ell-1}]$ with level $\ell$. In particular, this graph indicates the MLMC estimators with and without smoothing have approximately the same $\beta \approx 2.6$, so that we have, again, $\beta \approx 2\alpha$. 

\begin{figure}[t]
\begin{subfigure}[t]{.49\textwidth}
  \centering
  \includegraphics[width=\linewidth]{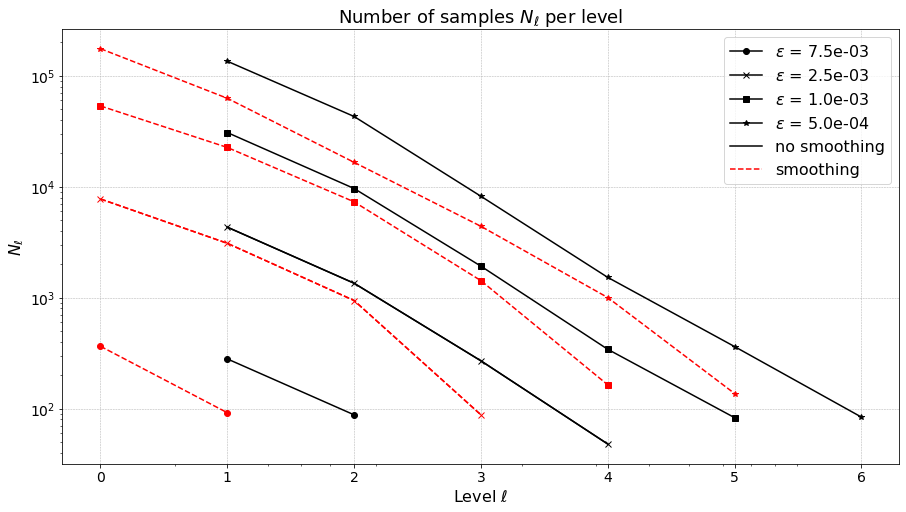}
  \caption{Number of samples $N_\ell$ for different $\varepsilon$.}
  \label{fig: 0.1-1.5-nl}
\end{subfigure}
\begin{subfigure}[t]{.49\textwidth}
  \centering
  \includegraphics[width=\linewidth]{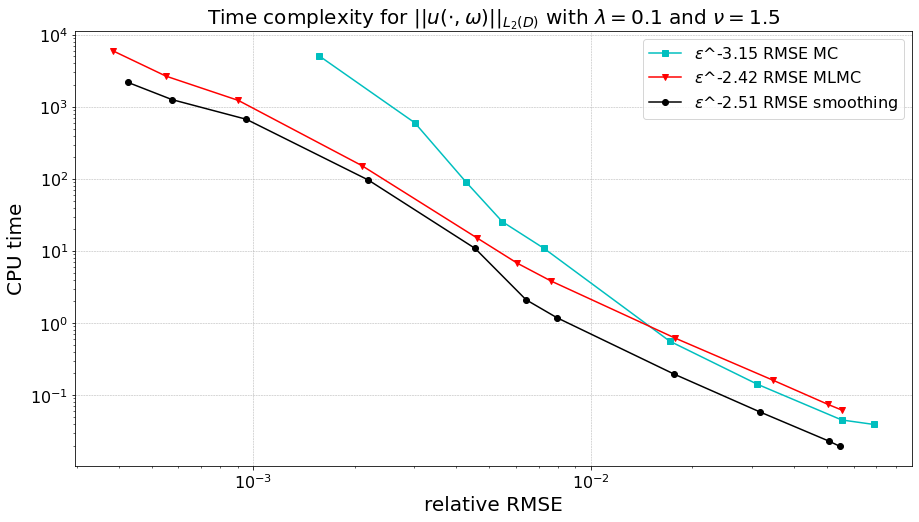}
  \caption{CPU times of MC, MLMC, MLMC-CES.}
  \label{fig: 0.1-1.5-mc-mlmc-smoothing}
\end{subfigure}
\caption{Plot of the number of samples $N_\ell$ on a log scale and relative RMSE for $\lambda = 0.1$, $\sigma^2 = 1$ and $\nu=1.5$ using Monte Carlo and Multilevel Monte Carlo with and without smoothing with $\ell = 0, \dots, 6$. The quantity of interest is the mean value of the L2 norm $\|u(\cdot, \omega)\|_{L^2(D)}$.}
\label{fig: 0.1-1.5-nl-rmse}
\end{figure}

In addition, Figure \ref{fig: 0.1-1.5-nl} portrays the number of samples $N_\ell$ needed per level $\ell = 0, \dotsc, L$ for various accuracies $\varepsilon$. This figure demonstrates that the drop from $N_L$ to $N_0$ is significant, as in the previous example. For instance, to achieve the smallest error considered here, namely $\varepsilon = 5 \cdot 10^{-4}$, roughly $10^{5}$ samples are necessary on the coarsest level of the MLMC estimator with smoothing with $h_0 = 2^{-1}$. Likewise, around $2 \cdot 10^2$ estimates are computed on level five, where $h_5 = 2^{-5}$. The dip is similar for the number of samples in the standard MLMC estimator, even though we use $h_0 = 2^{-2}$ in this case.

Figure \ref{fig: 0.1-1.5-mc-mlmc-smoothing} displays, on a log scale, the computational efficiency of the three surveyed algorithms to achieve a relative RMSE less than $\varepsilon$, for given $\varepsilon$. We anticipate $\mathcal{C}_\varepsilon\left(\widehat{Q}^{\text{MC}}_{h, N}\right) \lesssim \varepsilon^{-2-\gamma/\alpha} \approx \varepsilon^{-3.32}$ for the MC estimator, which is comparable to $\varepsilon^{-3.15}$ observed in our numerics. While expect from Theorem \ref{thm: the-only-theorem} that $\mathcal{C}_\varepsilon\left(\widehat{Q}^{\text{MLMC}}_L\right) \lesssim \varepsilon^{-2}$ as $\gamma < \beta$ for the MLMC estimators with and without smoothing, we still observe $\mathcal{C}_\varepsilon\left(\widehat{Q}^{\text{MLMC}}_L\right) \lesssim \varepsilon^{-2} (\log \varepsilon)^2$. This could be due to the initial drop in the $\mathbb{V}[Y_\ell]$, which is significantly faster than the decay corresponding to the later levels. Specifically, if we use the last three levels to estimate $\beta$, we obtain, in fact, $\beta \approx 2$.

Certainly, this plot establishes, again, that the most efficient method is the MLMC estimator with integrated smoothing. First, the regression line through the points $\left(\varepsilon, \mathcal{C}_\varepsilon \left(\widehat{Q}^{\text{MC}}_{h,N} \right) \right)$ has, on a log scale, a slope of $-3.15$. Second, both versions of the MLMC approach achieve a cost of roughly $C_\varepsilon \varepsilon^{-2.5}$. The difference rests on the value of $C_\varepsilon$. Specifically, if no smoothing is used, we have $C_\varepsilon \approx 4 \cdot 10^{-5}$, while with smoothing we achieve $C_\varepsilon \approx 10^{-5}$. 

Similar to the $\lambda = 0.1$ case in the above section, the computational savings prompted by the smoothing technique are not significant. This is because the coarsest mesh we can use in the standard MLMC estimator does not yield meaningful limitations in terms of number of levels we can exploit. As already mentioned, if we use $\nu = 1.5$ and $\lambda = 0.03$ in the Mat\'ern covariance, we obtain $h_0 = 2^{-4}$, in which case we expect considerable savings in computational cost.

\begin{figure}[t]
\begin{subfigure}[t]{.49\textwidth}
  \centering
  \includegraphics[width=\linewidth]{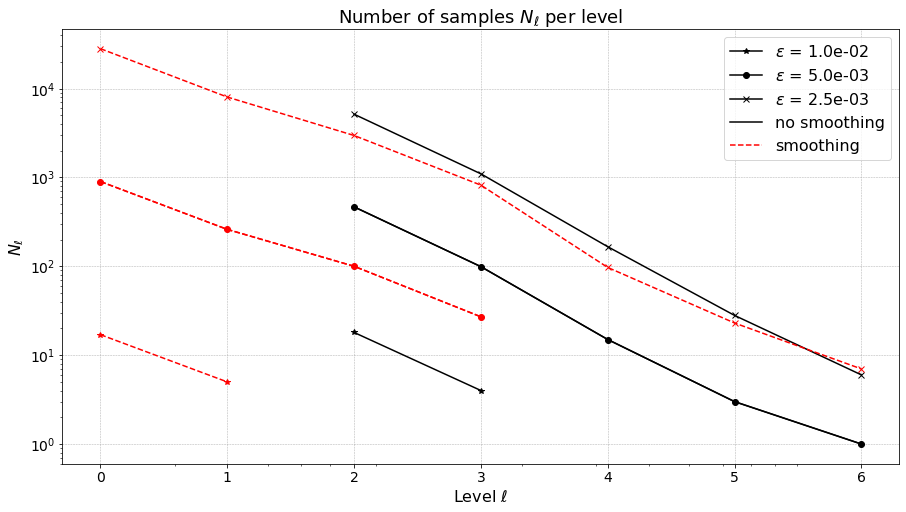}
  \caption{Number of samples $N_\ell$ for different $\varepsilon$.}
  \label{fig: 0.03-nl}
\end{subfigure}
\begin{subfigure}[t]{.49\textwidth}
  \centering
  \includegraphics[width=\linewidth]{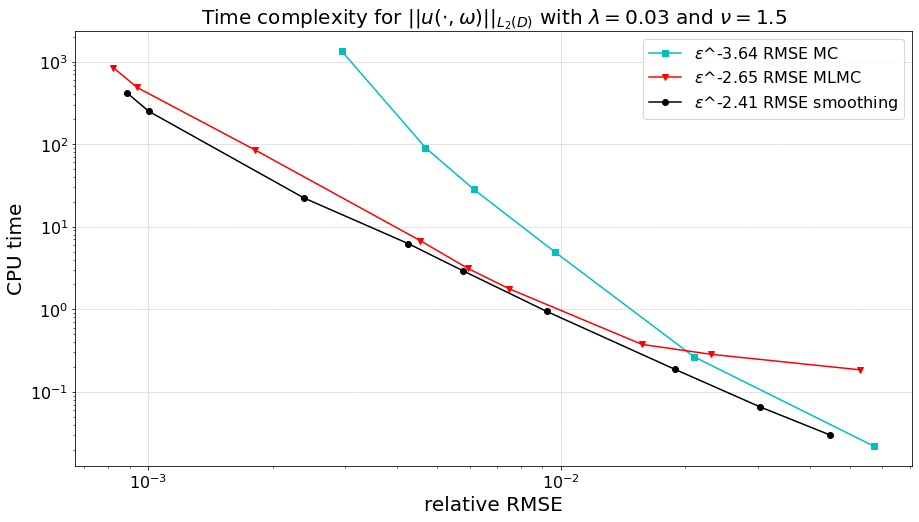}
  \caption{CPU times of MC, MLMC, MLMC-CES.}
  \label{fig: 0.03-mc-mlmc-smoothing}
\end{subfigure}
\caption{Plot of the number of samples $N_\ell$ on a log scale and relative RMSE for $\lambda = 0.03$, $\sigma^2 = 1$ and $\nu = 1.5$ using Monte Carlo and Multilevel Monte Carlo with and without smoothing with $\ell = 0, \dots, 6$. The quantity of interest is the mean value of the L2 norm $\|u(\cdot, \omega)\|_{L^2(D)}$.}
\label{fig: 0.03-nl-rmse}
\end{figure}

To support this, Figure \ref{fig: 0.03-nl} illustrates the number of samples $N_\ell$ in the case when $\lambda = 0.3$. Since level $0$ for MLMC without smoothing has a mesh size $h_0 = 2^{-4}$, this corresponds to level $2$ for the MLMC-CES estimator. While we observe that $N_0 \approx 2 \cdot 10^{4}$ are necessary for an accuracy of $2.5 \cdot 10^{-3}$ when smoothing is employed, these samples are computed on a grid with resolution $2^{-2}$, which are extremely cheap to estimate. In comparison, less than 10 samples are needed on the finest level. On the other hand, around $5 \cdot 10^3$ samples are sufficient for standard MLMC, but these are computed on a mesh of size $2^{-4}$.

This discussion is precisely echoed in Figure \ref{fig: 0.03-mc-mlmc-smoothing}, which showcases the computational complexity of MC, MLMC and MLMC-CES. In particular, note that the estimator with smoothing outperforms the other two algorithms. While it is substantially better than MC, whose cost satisfies $\mathcal{C}_\varepsilon\left(\widehat{Q}^{\text{MC}}_{h, N}\right) \lesssim \varepsilon^{-3.64}$, the main gain over the MLMC cost hinges on the constant $C_\varepsilon$ in $\mathcal{C}_\varepsilon\left(\widehat{Q}^{\text{MLMC}}_L\right) = C_\varepsilon \varepsilon^{-2}(\log \varepsilon)^2$. Specifically, for standard MLMC we recover $C_\varepsilon \approx 10^{-5}$, while for MLMC with smoothing we estimate $C_\varepsilon \approx 5 \cdot 10^{-6}$.

This feature is especially visible for larger accuracies $\varepsilon$. In fact, note that for $\varepsilon \geq 2.5 \cdot 10^{-2}$, the MLMC estimator with no smoothing is more costly to compute than the standard MC one. On the other hand, if we smooth the random field samples, the resulting procedure is faster than both. In particular, it is 10 times faster than the standard MLMC method for large accuracies. As we decrease the accuracy and more levels are included in the approximation, the performance of the two MLMC methods becomes comparable. Specifically, for $\varepsilon = 10^{-3}$,  MLMC-CES is roughly twice as fast than the standard MLMC.

Certainly, the benefits prompted by integrating the smoothing technique in the MLMC algorithm would be more compelling for even smaller correlation lengths. Guided by Eq. \eqref{eq: mlmc-h0-bound-exp-covariance} and \eqref{eq: mlmc-h0-bound-matern-covariance}, for $\nu = 0.5$ and $\lambda = 0.01$, we require $h_0 = 2^{-7}$ in the exponential covariance case and $h_0 = 2^{-6}$ in the Mat\'ern case, which limits severely the subsequent grid resolutions we can use. Hence, the circulant embedding with smoothing method would be essential in these cases.

\subsection{Comparison with KL-expansion}

\begin{figure}
\centering
\begin{subfigure}[t]{.49\textwidth}
  \centering
  \includegraphics[width=\linewidth]{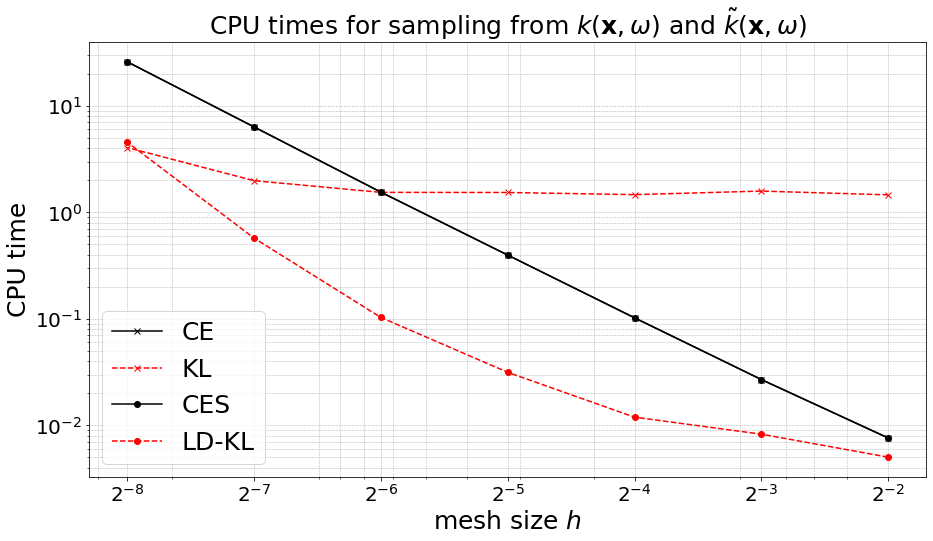}
  \caption{Cost per sample $k(\cdot, \omega)$ and $\tilde{k}(\cdot, \omega)$.}
  \label{fig: ld-ces-per-sample}
\end{subfigure}%
\begin{subfigure}[t]{.49\textwidth}
  \centering
  \includegraphics[width=\linewidth]{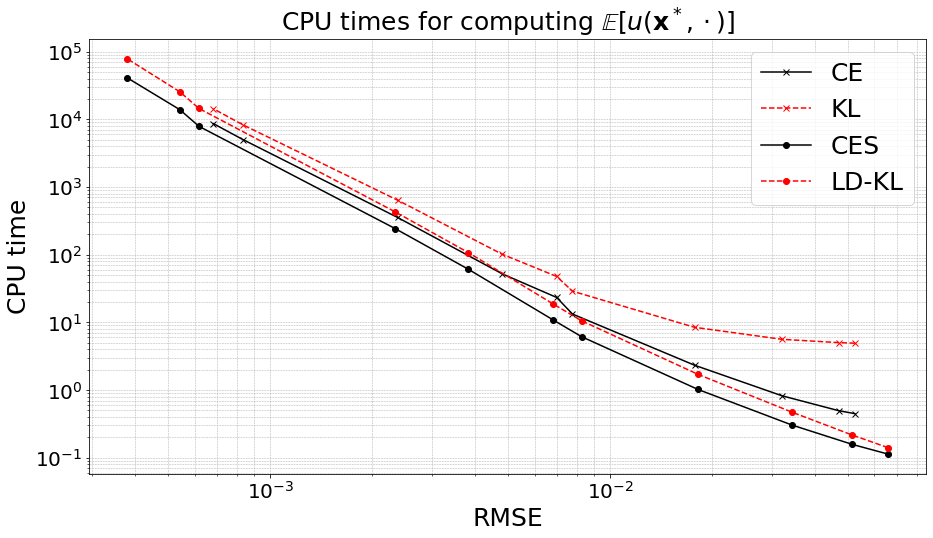}
  \caption{MLMC cost.}
  \label{fig: ld-ces-mlmc}
\end{subfigure}
\caption{Comparison plots for circulant embedding with and without smoothing, uniform and level-dependent truncation of KL-expansion using covariance \eqref{eq: p-norm-cov-function} with parameters $\lambda = 0.1$ and $\sigma^2 = 1$. The quantity of interest is the mean value of the pressure at $\mathbf{x}^*=\left(\frac{7}{15},\frac{7}{15}\right)$.}
\label{fig: ld-ces-cost}
\end{figure}

An alternative solution to the problem at hand is the level-dependent truncation of the KL-expansion of the random field $k(\mathbf{x}, \omega)$, proposed in \citep{teckentrup_further_2013, gittelson_multi-level_2013, schwab_multilevel_2023}. The aim of this section is to provide a brief comparison of the computational cost of this method and the circulant embedding with smoothing approach in the context of MLMC applied to the PDE \eqref{eq: pde-model}. 

To this end, we focus on the exponential covariance in Eq. \eqref{eq: p-norm-cov-function} with norm $p=1$, variance $\sigma^2 = 1$, and correlation length $\lambda = 0.1$, and consider the cost of estimating the mean value of the point evaluation $u(\mathbf{x}^*, \cdot)$, for $\mathbf{x}^* = \left(\frac{7}{15}, \frac{7}{15}\right)$.

Thus, Figure \ref{fig: ld-ces-cost} offers performance plots for the circulant embedding method with and without smoothing and the KL-expansion with uniform and level-dependent truncation. Specifically, Figure \ref{fig: ld-ces-per-sample} illustrates the computational cost achieved for computing one sample of $k(\mathbf{x}, \omega)$ and $\tilde{k}(\mathbf{x}, \omega)$, respectively, for different grid resolutions $h \in \{2^{-2}, \dotsc, 2^{-8}\}$. As explained in Section \ref{sec: 0.3-example}, there is practically no difference in the CE and CES costs. Further, for the KL-expansion, we use $m_{\text{KL}} = 2500$ modes, and for the level-dependent truncation we use the rule in \citep{teckentrup_further_2013} to choose the number of modes to include on each mesh. In the first case, the main contribution to the cost stems from computing the eigenvalues and eigenfunctions corresponding to the expansion \eqref{eq: kl-expansion}, so that the discretisation parameter $h$ is negligible. This explains the plateau up to $h = 2^{-7}$, where the small mesh size induces a more significant cost. In the case of the level-dependent truncation, as the number of modes is considerably smaller on each grid, the computational cost also scales accordingly. In particular, we observe that for one realisation $\tilde{k}(\mathbf{x}, \omega)$, the level-dependent truncation is more efficient than the smoothed circulant embedding.

On the other hand, Figure \ref{fig: ld-ces-mlmc} demonstrates that, once we sample repeatedly from the random field and also take into account the cost of solving the PDE, the MLMC with circulant embedding and its smoothing variant are more cost-effective than MLMC with the uniform and level-dependent truncation of the KL-expansion by roughly a factor of two. This is because, in order to integrate the KL-expansion in the finite element method, in addition to computing the eigenvalues and eigenfunctions in \eqref{eq: kl-expansion}, for each PDE solve we must perform a matrix-vector multiplication of the form $A\boldsymbol{\xi}$, where $A \in \mathbb{R}^{n \times m_{\text{KL}}}$ and $\boldsymbol{\xi} \in \mathbb{R}^{m_{\text{KL}}}$. Here, $m_{\text{KL}}$ is the number of modes we include in the expansion, and $n$ is the number of discretisation points of the mesh we are solving the PDE on. On the other hand, the circulant embedding method is inherently discrete, and so for each PDE solve we need only compute the FFT of $\boldsymbol{\Lambda} \odot \boldsymbol{\xi}$, where $\boldsymbol{\Lambda}, \boldsymbol{\xi} \in \mathbb{R}^s$, with $s = 4n^2$ generally, which is considerably faster. This makes the computational cost diminish tenfold when using MLMC with standard circulant embedding compared to using a uniformly truncated KL-expansion, particularly for large accuracies.

\section{Conclusions and Outlook}
 \label{sec: conclusions}

Circulant Embedding methods are computationally efficient methods for sampling from random fields, as they exploit the Fast Fourier Transform algorithm to achieve speed-up. In the context of uncertainty quantification using Multilevel Monte Carlo for PDEs with random coefficients, this can be applied to generate realisations from the random parameter. 

In some applications, such as groundwater flow, we are interested in the case when the covariance function associated with the random parameter has a short correlation length relative to the size of the computational domain, so that realisations exhibit small-scale fluctuations. In such a case, it is not possible to use very coarse meshes in the MLMC estimator, which limits the subsequent number of levels we can exploit. {Hence, this renders the standard MLMC method computationally expensive}. 

The circulant embedding with smoothing technique developed here  alleviates this issue by producing approximate samples from the random field, which allows to freely choose the mesh size used in the coarsest level approximation of MLMC. {By exploiting these cheap approximations computed with a large mesh size, we do not modify the asymptotic behaviour of the cost, but we reduce the constant in the complexity result by up to a factor of 5-10}. This paper looks not only at the theoretical analysis of the approximation error {introduced by smoothing the random field samples}, but also showcases numerical results which illustrate the computational savings that the smoothing technique can achieve in practical examples.

A straightforward extension is to integrate the proposed approach with MLQMC. As QMC estimators are known to outperform standard MC, we expect that this would yield a more efficient method. In addition, the smoothing technique is similar in nature to the level-dependent truncation of the KL-expansion. Analogous extensions could be developed for the spectral representation method \citep{rice_mathematical_1944, shinozuka_digital_1972}, the wavelet reconstruction approach \citep{zeldin_random_1996} or the white noise sampling scheme \citep{lindgren_explicit_2011}.

\section*{Acknowledgements}
AI was supported by the EPSRC Centre for Doctoral Training in Mathematical Modelling, Analysis and Computation (MAC-MIGS) funded by the UK Engineering and Physical Sciences Research Council (grant EP/S023291/1), Heriot-Watt University and the University of Edinburgh. This work used the Cirrus UK National Tier-2 HPC Service at EPCC (http://www.cirrus.ac.uk) funded by the University of Edinburgh and EPSRC (EP/P020267/1). AI and ALT would like to thank the Isaac Newton Institute for Mathematical Sciences, Cambridge, for support and hospitality during the programme {\em Mathematical and statistical foundation of future data-driven engineering} where work on this paper was undertaken. This work was supported by EPSRC grant no EP/R014604/1.

\bibliography{references}

\end{document}